\theoremstyle{plain}
\newtheorem{thm}{Theorem}[section]
\newtheorem{theorem}[thm]{Theorem}
\newtheorem{lemma}[thm]{Lemma}
\newtheorem{corollary}[thm]{Corollary}
\newtheorem{proposition}[thm]{Proposition}
\theoremstyle{definition}
\newtheorem{remark}[thm]{Remark}
\newtheorem{notation}[thm]{Notation}
\newtheorem{notation-definition}[thm]{Notation-Definition}
\newtheorem{notation-remark}[thm]{Notation-Remark}
\numberwithin{equation}{section}
\newcommand{\CC}{\mathbb{C}}
\newcommand{\PP}{\mathbb{P}}
\newcommand{\cO}{\mathcal{O}}
\newcommand{\Sym}{{\rm Sym}}
\newcommand{\be}{{\bold e}}
\begin{document}

\title[Lagrangian fibration structure on the cotangent bundle]{Lagrangian fibration structure on the cotangent bundle of a del Pezzo surface of degree 4}
\author{Hosung Kim and Yongnam Lee}

\address {Center for Complex Geometry\\
Institute for Basic Science (IBS)\\
55 Expo-ro, Yuseong-gu\\ 
Daejeon, 34126 Korea}
\email{hosung@ibs.re.kr}

\address {Center for Complex Geometry\\
Institute for Basic Science (IBS)\\
55 Expo-ro, Yuseong-gu\\ 
Daejeon, 34126 Korea,  and 
\newline\hspace*{3mm} Department of Mathematical Sciences\\
KAIST\\
291 Daehak-ro, Yuseong-gu\\ 
Daejeon, 34141 Korea}
\email{ynlee@ibs.re.kr}

\thanks{MSC 2010: 14J60, 14J26, 53D12\\ 
Key words: del Pezzo surface of degree 4, cotangent bundle, Lagrangian fibration}
\date{\today}

\begin{abstract}
In this paper, we show that there is a natural Lagrangian fibration structure on the map $\Phi$ from the cotangent bundle of a del Pezzo surface $X$ of degree 4 to $\CC^2$. Moreover, we describe explicitly all level surfaces of the above natural map $\Phi$.
\end{abstract}

\maketitle

\section{Introduction}

Throughout this paper we will work over the field of complex numbers.

The cotangent bundle of a complex projective manifold carries a natural holomorphic symplectic 2-form. The existence of a natural Lagrangian fibration structure of these non-compact complex manifolds has not been studied very much. 

There are two famous known examples in this direction, one is the Hitchin map \cite{Hit} 
$h : T^*_X \to \CC^{(r^2-1)(g-1)}$ where $X$ is the moduli space $SU^s_C(r,d)$ of stable vector bundle of rank $r$ with a fixed determinant of degree $d$ coprime to $r$ over a smooth projective curve $C$ of genus $g$. This map has been used as a tool to derive results on the moduli spaces themselves in \cite{BNR}. The other example is a rational homogeneous space $G/P$ where $G$ is a semisimple complex Lie group and $P$ is a parabolic subgroup. The group $G$ acts on the cotangent bundle $T^*_{G/P}$ as symplectic automorphisms. This induces the moment map $ T^*_{G/P}\to \mathcal{G}^*$ to the dual of the Lie algebra of $G$ (cf. Section 1.4 of [CG]). Both examples $SU^s_C(r,d)$ and $G/P$ are Fano manifolds. This suggests that there may exist some interesting Lagrangian fibration structure in the cotangent bundles of Fano manifolds. J-M. Hwang \cite{Hwang} shows that the varieties of minimal rational tangents play an important role in the symplectic geometry of the cotangent bundles of uniruled projective manifolds.

The current paper is motivated by the fundamental work of the moduli spaces of vector bundles from the viewpoint of symplectic geometry of its cotangent bundle by Hitchin \cite{Hit}, and by the result of J-M. Hwang and Ramanan in \cite{HR04}, where they studied  the Hitchin system and the Hitchin discriminant associated to the Hitchin map on the cotangent bundle of $SU^s_C(r,d)$.

Our computational result directly shows that the cotangent bundle of a del Pezzo surface $X$ of degree 4 has also the Lagarangian fibration structure $\Phi:T_X^*\rightarrow \mathbb C^2$, and its level surfaces have some similar properties of the Hitchin discriminant in \cite{HR04}. Similarly as Corollary 4.6 in \cite{HR04}, $\Phi^{-1}(\Delta)$ is the closure of the union of rational curves in $T_X^*$ where $\Delta=\{b\in\CC^2\, | \,\text{$\Phi^{-1}(b)$ is singular}\}$. In the current paper, $\Delta=\{\text{five lines through the origin in $\CC^2$}\}$.

On the other hand, the positivity problem of the tangent bundle of a del Pezzo surface $S$ of degree $d$ is completely answered recently in \cite{Mal21} and in \cite{HLS}. If $S$ is a del Pezzo surafce of degree $d$ then
\begin{itemize}
    \item $T_S$ is big if and only if $d\ge 5$.
    \item $H^0(S, {\rm Sym}^m T_S)=0$ for all $m\ge 1$ if and only if $d\le 3$.
\end{itemize}
So the case of $d=4$ arouses special interest to us.

Let $X$ be a del Pezzo surface of degree 4. Then $X$ is a complete intersection of two  hypersurfaces in $\mathbb P^4=\mathbb P^4_{y_1,\ldots,y_5}$ defined by homogeneous polynomials $Q_1$ and $Q_2$ of degree 2  in  variables $y_1,\ldots,y_5$ respectively. 
By  a linear change of variables and multiplication by $\mathbb C^*$, we can assume that 
\begin{equation}\label{eq.quadrics}Q_1=\sum_{i=1}^5 y_i^2
\mbox{ and } Q_2=\sum_{i=1}^5a_iy_i^2 \end{equation}for some distinct  $a_i\in\mathbb C$. (cf. Theorem 8.6.2 in \cite{Dol}). 

From Theorem 5.1 in \cite{OL19} and the proof of Theorem 6.1 in \cite{Mal21}, there is an isomorphism of  graded rings:

\begin{equation}\label{eq.graded ring isomorphism}\bigoplus_{m=0}^{\infty}H^0(X, \Sym^mT_X)\simeq \mathbb C[Q_1,Q_2].\end{equation}
In particular $Q_1$ and $Q_2$ form a  basis of $H^0(X, \Sym^2 T_X)$. 

Let  
$$ \Phi:T^*_X\rightarrow \mathbb C^2$$
be the natural morphism defined by  the pair  $(Q_1,Q_2)$. For each $e\in \mathbb C^2$,  the fiber $\Phi^{-1}({e})\subset T_X^*$ will be called a level surface, and we will denote it by $S_e$. From the isomorphism in  (\ref{eq.graded ring isomorphism}), we can see that $S_{e}\cong S_{\lambda e}$ for all $\lambda\in\mathbb C^*$. 

First of all, in this paper, we show the following theorem. The proof will be given in Section 2.

\begin{theorem}\label{t.lagrangian}
The morphism $\Phi:T_X^*\rightarrow \mathbb C^2$ is a Lagrangian fibration.
\end{theorem}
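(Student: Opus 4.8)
The plan is to recognize $\Phi=(Q_1,Q_2)$ as a holomorphic completely integrable system: I will show that $Q_1$ and $Q_2$ Poisson-commute for the canonical symplectic form $\omega$ on $T^*_X$ and that they are generically independent, from which it follows by a standard pointwise argument that the regular fibers are Lagrangian. The first ingredient is the standard dictionary identifying a global section of $\Sym^m T_X$ with a holomorphic function on the total space of $T^*_X$ whose restriction to each fiber $T^*_xX$ is a homogeneous polynomial of degree $m$; under it the graded ring $\bigoplus_m H^0(X,\Sym^m T_X)$ becomes the ring of fiberwise-polynomial functions, and $Q_1,Q_2$ become two such functions, each of fiber-degree $2$.

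The heart of the proof is the interaction of the Poisson bracket with this fiber grading. In Darboux coordinates $(q_i,p_i)$ on $T^*_X$ one has $\omega=\sum_i dp_i\wedge dq_i$, and inspection of $\{f,g\}=\sum_i(\partial_{p_i}f\,\partial_{q_i}g-\partial_{q_i}f\,\partial_{p_i}g)$ shows that bracketing functions of fiber-degrees $p$ and $q$ yields a function of fiber-degree $p+q-1$. Hence $\{Q_1,Q_2\}$ is again a global fiberwise-polynomial function, and it lies in $H^0(X,\Sym^3 T_X)$. But the isomorphism (\ref{eq.graded ring isomorphism}), together with $\deg Q_1=\deg Q_2=2$, shows that $\mathbb C[Q_1,Q_2]$ is supported in even fiber-degrees, so $H^0(X,\Sym^3 T_X)=0$ and therefore $\{Q_1,Q_2\}=0$. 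This is precisely where the special structure of the degree-$4$ del Pezzo surface enters, and it is the conceptual crux of the argument.

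For generic independence, I note that (\ref{eq.graded ring isomorphism}) exhibits $Q_1,Q_2$ as algebraically independent generators of a polynomial ring, so $\Phi$ is dominant onto $\mathbb C^2$ and, by generic smoothness, $dQ_1\wedge dQ_2\neq 0$ off a proper closed subset. On that open locus the Hamiltonian vector fields $X_{Q_1},X_{Q_2}$ are tangent to the fibers of $\Phi$ (since $X_{Q_i}(Q_j)=\{Q_i,Q_j\}=0$), are linearly independent (as $\omega$ is nondegenerate and $dQ_1,dQ_2$ are independent), and satisfy $\omega(X_{Q_i},X_{Q_j})=\{Q_i,Q_j\}=0$; since they span the two-dimensional tangent space $\ker dQ_1\cap\ker dQ_2$ of each regular fiber, that fiber is isotropic of dimension $2=\tfrac12\dim T^*_X$, hence Lagrangian, and $\Phi$ is a Lagrangian fibration. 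I expect the only genuine care to lie in the bookkeeping that guarantees $\{Q_1,Q_2\}$ is a \emph{global} holomorphic section of $\Sym^3 T_X$ (so that the vanishing above actually applies) and that the fiber grading behaves exactly as stated; once these are in place the conclusion follows formally.
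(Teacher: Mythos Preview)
Your proof is correct and takes a genuinely different, more conceptual route from the paper. The paper proceeds by brute-force computation: it writes an arbitrary pair $H,G\in H^0(X,\Sym^2 T_X)$ in explicit local coordinates on $T^*_X$ (via the presentation $X=\mathrm{Bl}_{p_1,\dots,p_5}\mathbb P^2$), derives a large collection of linear relations on their coefficients (Lemmas~\ref{l.projective plane} and~\ref{l.blow-up}), and then verifies with a Magma computation (Proposition~\ref{p.lagrangian}) that the expression
\[
H_yG_v-H_vG_y+H_xG_u-H_uG_x
\]
vanishes identically subject to those relations. Your key observation---never made explicit in the paper---is that this expression is exactly the Poisson bracket $\{H,G\}$ for $\omega=dx\wedge du+dy\wedge dv$, that by the fiber-degree count it lands in $H^0(X,\Sym^3 T_X)$, and that this group vanishes by~(\ref{eq.graded ring isomorphism}) since $\mathbb C[Q_1,Q_2]$ has no odd-degree part. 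This replaces the entire computer-aided verification with a two-line structural argument and makes transparent \emph{why} the Lagrangian property holds: it is precisely the absence of odd-degree symmetric tensors on a degree-$4$ del Pezzo. The tradeoff is that your proof imports $H^0(X,\Sym^3 T_X)=0$ from~(\ref{eq.graded ring isomorphism}) (i.e., from \cite{OL19} and \cite{Mal21}) as a black box, whereas the paper's direct calculation, though opaque and machine-assisted, is self-contained once one has the local descriptions of Lemmas~\ref{l.projective plane} and~\ref{l.blow-up}.
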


It means that the restriction $\omega|_{S_e}$ of the natural symplectic two form $\omega$ on $T_X^*$ is zero.
Remark~\ref{relation} also explains the relation between the Lagrangian fibration structure of the Hitchin map for the case of $g=2$ and the Lagrangian fibration structure of the cotangent bundle of a del Pezzo surface $X$ of degree 4.
\medskip

Meanwhile, we let $\zeta:=\mathcal O(1)$ be the tautological line bundle on $\mathbb P(T_X)$ so that $\pi_*\zeta=T_X$ where $\pi: \PP(T_X) \to X$ be the projection. 
By the isomorphism  $H^0(\mathbb P(T_X),2\zeta)\simeq H^0(X,\Sym^2 T_X)$, 
the pencil $\{Q_{{\be}}\}_{ \be\in \PP^1}$ of quadric hypersurfaces in $\mathbb P^4$ induced by  $Q_1$ and $Q_2$ gives the linear system $|2\zeta|$ in $\PP(T_X)$ defining a rational map $\tilde \phi:\mathbb P(T_X)\dashrightarrow \mathbb P^1$. 

It is well known that there are exactly 16 lines $\ell_1,\ldots,\ell_{16}$ in $\mathbb P^4$ contained in $X$.   
The base locus $B$ of the linear system $|2\zeta|$ in $\PP(T_X)$ consists of the disjoint union of 16 sections $\ell_i'$ of $\mathbb P(T_X|_{\ell_i})\rightarrow \ell_i$ which are associated to  quotients 
$T_X|_{\ell_i}=\mathcal O_{\mathbb P^1}(2)\oplus \mathcal O_{\mathbb P^1}(-1)\twoheadrightarrow \mathcal O_{\mathbb P^1}(-1)$ (p.12 in \cite{HLS}); Since $\ell_i'\cdot \zeta=-1$, we have  $\ell_i'\subset B$.

After the blow-up $\mu_B:{\rm Bl}_B\mathbb P(T_X)\rightarrow \mathbb P(T_X)$ along the base locus $B$, we have a morphism  
$$\phi:{\rm Bl}_B\mathbb P(T_X)\rightarrow \mathbb P^1,$$
and the following commutative diagram of morphisms and rational maps:

\[\xymatrix @R=1pc @C=2pc {
{\rm Bl}_B\mathbb P(T_X)\ar[r]^{\mu_B}\ar[rdd]_{\phi}& \mathbb P(T_X)\ar[rd]_{\pi}\ar@{.>}^{\tilde \phi}[dd]&  &T_X^*\ar@{.>}[ll]\ar[dd]^{\Phi}\ar[ld]^{\Pi}\\
&  &X & \\
& \mathbb P^1 & &\mathbb C^2\ar@{.>}[ll]
}
\]

\bigskip

For each ${\bold e}\in \mathbb P^1$, we let $K_{{\bold{e}}}$ be the fiber $\phi^{-1}({\bold{e}}).$ In Lemma~\ref{l.characterization of irreducible fibers} we show that  $K_{\be}$  is a double cover of $X$ if $Q_{\be}$ is smooth. Due to the description of Section 2 in \cite{OL19}, for each $x\in X$, the points in $K_\be$ over $x$ correspond to lines in $Q_\be\cap {\bold T}_x X$ through $x$ where ${\bold T}_x X\subset \PP^4$ denotes the embedded projective tangent plane to $X$ at $x$. So if $Q_{\be}$ is smooth, the branch locus of the double cover from $K_\be$ to $X$  is the locus of points $x$ such that $Q_\be\cap {\bold T}_x X$ is a double line. The total dual VMRT theory (cf. \cite{HLS}) also helps to give an explicit description of $K_\be$, especially when $K_\be$ is not irreducible. We will explain this description in Section 3.

Through an explicit description of $K_\be$,  we get the following theorem (a) for a general $e \in \CC^2$ and (b). Then by using the idea of the characteristic vector fields in \cite{HO09}, the proof of Theorem~\ref{level surface}(=Theorem~\ref{level}) can be completed.
The proof will be given in Section 3.

\begin{theorem}\label{level surface}Let  $\bold b_1,\ldots,\bold b_5$ be the points in $\mathbb P^1$ such that  $Q_{\bold b_i}$ are singular. For each $i=1,\ldots,5$, take one point $b_i$ in the fiber at $\bold b_i$ of 
 the quotient map $\mathbb C^2\setminus\{0\}\rightarrow \mathbb P^1$ and let $\mathbb C\cdot b_i$ be the line in $\mathbb C^2$ through $b_i$ and the origin . 
We have the following description of level surfaces of  $\Phi:T_X^*\to \CC^2$.
\begin{itemize}
\item[(a)] For every $e\in\CC^2\setminus\cup_{i=1}^5 \mathbb C\cdot b_i$, $S_e$ is $\bar S_e\setminus\{{\text{16 points}}\}$ where $\bar S_e$ is isomorphic to the Jacobian variety of a curve $C_e$ of genus two.
\item[(b)] For each $i=1,\ldots,5$, we have  the following description of $S_{b_i}$.
\begin{itemize}
    \item[(i)] $S_{b_i}$ consists of two irreducible components $A_{i, 1}$ and $A_{i, 2}$. 
    \item[(ii)] Each $A_{i, j}$ for $j=1, 2$ is a ruled surface$\setminus\text{\{8 points\}}$ over an elliptic curve $\bar E_{i,j}$.
    \item[(iii)] $A_{i, 1}\cap A_{i, 2}$ is an elliptic curve $E'_{b_i}$.
    \item[(iv)] In the fibration $A_{i, j}\to\bar E_{i,j}$, $E'_{b_i}$ intersects two distinct points at each fiber.
\end{itemize} 
\end{itemize}
\end{theorem}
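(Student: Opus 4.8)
The plan is to study each level surface $S_e$ through its image in $\mathbb P(T_X)$ and to recover the remaining ``scaling'' direction from the Hamiltonian flows of $Q_1$ and $Q_2$. Restricting the $\mathbb C^*$-bundle $T_X^*\setminus\{0\}\to\mathbb P(T_X)$ to $S_e$, a point $(x,\xi)$ goes to its direction $[\xi]$; since $Q_i(x,\lambda\xi)=\lambda^2 Q_i(x,\xi)$, the fibre of $S_e\to\mathbb P(T_X)$ over $[\xi]$ is exactly $\{(x,\pm\xi)\}$, so $S_e\to K_{\be}$ is an \'etale double cover onto its image, where $\be$ is the point of $\mathbb P^1$ determined by the ratio $e_1:e_2$. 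Thus the explicit description of $K_{\be}$ from Section~3 determines $S_e$ up to this canonical $\mathbb Z/2$, and the Lagrangian property (Theorem~\ref{t.lagrangian}) supplies two commuting Hamiltonian vector fields $H_{Q_1},H_{Q_2}$ (they commute because $\{Q_1,Q_2\}=0$), tangent to every fibre, which will provide the group structure that the projectivized picture loses.

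For part~(a) I would first use Lemma~\ref{l.characterization of irreducible fibers}: when $Q_{\be}$ is smooth, $K_{\be}$ is the double cover of $X$ whose branch curve is the locus where $Q_{\be}\cap\mathbf T_xX$ is a double line, and a Chern-class computation identifies this branch curve as a member of $|-2K_X|$, whence $K_{K_{\be}}=0$ and $\chi(\mathcal O)=2$, i.e.\ $K_{\be}$ is a $K3$ surface carrying $16$ disjoint rational curves over the $16$ lines $\ell_i$. The key step is then to recognize this $K3$ as the Kummer surface of $\bar S_e$: the \'etale double cover $S_e\to K_{\be}\setminus\{16\text{ curves}\}$ is precisely the restriction of $\bar S_e\to\mathrm{Kum}(\bar S_e)$, so $\bar S_e$ is an abelian surface and $S_e=\bar S_e\setminus\{16\text{ two-torsion points}\}$, the $16$ points matching the base-locus sections $\ell_i'$. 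The independent commuting fields $H_{Q_1},H_{Q_2}$ exhibit the group law and the principal polarization read off from Kummer's configuration, so $\bar S_e\cong\mathrm{Jac}(C_e)$ for a genus-two curve $C_e$; the same fields remain independent for \emph{every} $e\notin\cup_i\mathbb C\cdot b_i$, ruling out singular fibres beyond the five and upgrading the generic conclusion to all such $e$.

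For part~(b) I would fix $\be=\bold b_i$, so $Q_{\bold b_i}$ is the rank-$4$ cone, with vertex $v_i\notin X$, over a smooth quadric surface $\mathcal Q_i\cong\mathbb P^1\times\mathbb P^1\subset\mathbb P^3$. Projection from $v_i$ presents $X$ as a double cover $X\to\mathcal Q_i$ branched along a curve of bidegree $(2,2)$, i.e.\ an elliptic curve, and the two rulings of $\mathcal Q_i$ split the line-incidence description of $K_{\bold b_i}$ into two halves, producing the two components $A_{i,1},A_{i,2}$ of $S_{\bold b_i}$. On each component the characteristic vector field of the Lagrangian fibration, in the sense of \cite{HO09}, is nowhere zero and its flow lines give the ruling; the space of these lines is the elliptic curve $\bar E_{i,j}$ coming from the corresponding ruling $\mathcal Q_i\to\mathbb P^1$ composed with the double cover, which yields (ii). The two components meet where the two rulings of $\mathcal Q_i$ meet, along the elliptic curve $E'_{\bold b_i}$, and the degree-two nature of $X\to\mathcal Q_i$ forces $E'_{\bold b_i}$ to cut each ruling line in two points, giving (iii)--(iv); the $16$ points of part~(a) degenerate as $8+8$ over the two components.

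The hard part will be part~(b): proving that $S_{\bold b_i}$ degenerates into \emph{exactly} two components, each genuinely ruled over an elliptic curve with the stated incidence, rather than into some other configuration. This is where the $K3$/abelian picture is most delicate, and where the characteristic-vector-field technique of \cite{HO09} does the real work, both in producing the ruling on each $A_{i,j}$ and in controlling how $E'_{\bold b_i}$ meets each fibre. By comparison, the remaining bookkeeping in part~(a)---the precise compactification and the identification of the $16$ removed points with the lines $\ell_i$---is routine once the Kummer identification is in place.
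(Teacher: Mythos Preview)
Your approach to part~(a) is essentially the paper's: the involution $(x,\xi)\mapsto(x,-\xi)$ realizes $S_e\to K_{\be}$ as a free double cover onto $K_{\be}\setminus\cup_i\ell_{\be,i}$, the branch-divisor computation shows $K_{\be}$ is a K3 with sixteen disjoint $(-2)$-curves, hence a resolved Kummer surface, and the abelian double cover is a Jacobian. The Hamiltonian-flow argument you sketch for upgrading ``general $e$'' to ``every $e\notin\cup_i\mathbb C\cdot b_i$'' is exactly the characteristic-vector-field trick the paper invokes from \cite{HO09}: an isolated singularity of $S_e$ would flow under $H_{Q_i}$ to a curve of singularities, a contradiction.

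For part~(b) you take a genuinely different route. The paper works on the $\mathbb P^2$-model: it identifies the two conic pencils $\pi_{i,1},\pi_{i,2}$ on $X$ via lines through $p_i$ and conics through the remaining four points, then uses the total-dual-VMRT formula $[\breve{\mathcal C}_{i,1}]+[\breve{\mathcal C}_{i,2}]=2\zeta$ from \cite{HLS} to split $K_{\bold b_i}$, and reads off the elliptic base $\bar E_{i,j}$ as the double cover of $\mathbb P^1$ branched at the four singular values of $\pi_{i,j}$. Your cone-projection picture is the $\mathbb P^4$-side of the same geometry: the two rulings of $\mathcal Q_i\cong\mathbb P^1\times\mathbb P^1$ pull back to exactly the pencils $\pi_{i,1},\pi_{i,2}$, and your $(2,2)$ branch curve is the paper's $E_{\bold b_i}$. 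Your approach is more intrinsic and avoids the VMRT machinery; the paper's is more explicit about which eight lines $\ell'_k$ land on each component and how the singular fibres of $\breve\pi_{i,j}$ arise.

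One correction to your difficulty assessment: in the paper the situation is inverted. Part~(b) is handled entirely by the explicit construction above, with no appeal to \cite{HO09}; the characteristic-vector-field argument is used \emph{only} for the ``every $e$'' clause in~(a), to exclude extra singular fibres. You propose to use those flows also to manufacture the ruling on $A_{i,j}$, which is plausible but unnecessary once the conic-pencil (equivalently, ruling-pullback) description is in hand.
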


We remark that $X$ is isomorphic to the blow up of $\mathbb P^2$ at the five points which are the images of $\bold b_1,\ldots,\bold b_5$ under the Veronese embedding (cf. \cite{Sk}). 

As a corollary, the map $\Phi$ is flat, and all elements of the linear system $|2\zeta|$ in $\PP(T_X)$ can be also fully described.


\begin{theorem} We have the following description of $K_\be$ for all $\be\in\PP^1$.
\begin{itemize}
    \item[(a)] For every ${\bold e}\in \mathbb P^1\setminus\{\bold b_1,\ldots,\bold b_5\}$, 
$K_\be$ is a K3 surface of degree 8 of Kummer type. It has 16 (-2)-curves $\ell_{\bold e,i}$  which are intersection of  $K_{\be}$ with the exceptional divisor $D$ of the blow-up $\mu_B:{\rm Bl}_B\mathbb P(T_X)\rightarrow \mathbb P(T_X)$. 
\item[(b)] For each $i=1,\ldots,5$, we have  the following  description of $K_{\bold b_i}$. 
\begin{itemize}
    \item[(i)] $K_{\bold b_i}$ consists of two irreducible components $\breve{\mathcal C}_{i,1}$ and $\breve{\mathcal C}_{i,2}$.
    \item[(ii)] For each $j=1,2$, we have a conic fibration $\pi_{i,j}:X\rightarrow \mathbb P^1$ with four singular fibers such that $\breve{\mathcal C}_{i,j}$  is isomorphic to the blow-up of $X$ at four distinct points which are the singular points of the four singular fibers of  $\pi_{i,j}$. 
    \item[(iii)] $\breve{\mathcal C}_{i,1}\cap \breve{\mathcal C}_{i,2}$ is a smooth elliptic curve $E_{\bold b_i}$.
    \item[(iv)] In the fibration $\breve\pi_{i,j}:\breve{\mathcal C}_{i,j}\to\PP^1$ given by the composition of the blow-up $\breve{\mathcal C}_{i,j}\rightarrow X$ in $(ii)$ and the conic fibration $\pi_{i,j}$, $E_{\bold b_i}$ intersects two distinct points at each smooth fiber, and one point at the exceptional curve of each singular fiber. 
\end{itemize} 
\end{itemize}
\end{theorem}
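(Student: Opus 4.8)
The plan is to determine $K_\be$ from two complementary descriptions: its relation to the level surface $S_e$, and the explicit geometry of the pencil member $Q_\be$. The key preliminary observation is that, off the zero section, the projectivization $T_X^*\dashrightarrow\PP(T_X)$ identifies $v$ with $\lambda v$, while each $Q_j$ is homogeneous of degree two in the fibre coordinates; hence along $S_e=\Phi^{-1}(e)$ only $\lambda=\pm1$ preserves the equations, and the projectivization carries $S_e$ two-to-one onto the unblown-up model of $K_\be$, realizing $K_\be$ (up to the resolution performed by $\mu_B$) as the quotient $\bar S_e/\iota$ by the involution $\iota\colon v\mapsto -v$. This dovetails with the description recalled in Lemma~\ref{l.characterization of irreducible fibers}: since $X\subset Q_\be$, the plane ${\bold T}_xX$ lies in the tangent hyperplane of $Q_\be$ at $x$, so the conic $Q_\be\cap{\bold T}_xX$ is singular at $x$ and splits into two lines through $x$, which are the two points of the double cover $K_\be\to X$ over $x$.

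For part (a) I fix $\be\notin\{\bold b_1,\dots,\bold b_5\}$, so $Q_\be$ is smooth and $K_\be\to X$ is an honest double cover. By Theorem~\ref{level surface}(a), $\bar S_e\cong\mathrm{Jac}(C_e)$, on which $\iota$ is the standard $(-1)$-involution with its $16$ two-torsion fixed points; thus $\bar S_e/\iota$ is the Kummer surface of $\mathrm{Jac}(C_e)$ and its $16$ nodes are resolved by $\mu_B$ into $16$ disjoint $(-2)$-curves. I would then match these with the components of $K_\be\cap D$ by identifying the $16$ two-torsion points with the $16$ base sections $\ell_i'$ (equivalently, with the $16$ points deleted from $\bar S_e$ in Theorem~\ref{level surface}(a)), which exhibits $K_\be$ as a K3 surface of Kummer type. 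For the degree I avoid $\zeta$ (negative on the exceptional curves, as $\ell_i'\cdot\zeta=-1$) and use instead the pullback of the anticanonical polarization: on $\PP(T_X)$ one has $\pi^*(-K_X)^2\cdot 2\zeta=2(-K_X)^2=8$, so $\pi^*(-K_X)|_{K_\be}$ is a degree-$8$ polarization.

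For part (b) I fix $\be=\bold b_i$. By the normalization \eqref{eq.quadrics}, $Q_{\bold b_i}$ is a rank-four cone with vertex the coordinate point $e_i\notin X$ over a smooth quadric surface $\Sigma\cong\PP^1\times\PP^1\subset\PP^3$. Projection from $e_i$ gives a generically two-to-one morphism $X\to\Sigma$, and the two rulings of $\Sigma$ induce two conic bundles $\pi_{i,1},\pi_{i,2}\colon X\to\PP^1$ whose fibres are the sections of $X$ by the planes spanned by $e_i$ and a ruling line; the count $e(X)=8=4+s$ forces $s=4$ singular fibres for each. The decisive point is that, over $x$, the two lines of $Q_{\bold b_i}\cap{\bold T}_xX$ lie in the two different rulings of the cone and are therefore globally separated, so the double cover splits into the two components $\breve{\mathcal C}_{i,1},\breve{\mathcal C}_{i,2}$, each birational to $X$. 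A local analysis at the four nodes of the singular fibres, via the total dual VMRT description, should identify $\breve{\mathcal C}_{i,j}\to X$ with the blow-up at exactly these four nodes, yielding (i), (ii) and the fibration $\breve\pi_{i,j}$. Finally $E_{\bold b_i}=\breve{\mathcal C}_{i,1}\cap\breve{\mathcal C}_{i,2}$ is the locus where the two ruling-lines coincide, i.e.\ the limit of the branch curve; since the branch divisor lies in $|-2K_X|$ and degenerates to $2E_{\bold b_i}$, we get $E_{\bold b_i}\in|-K_X|$, a smooth elliptic curve, with $E_{\bold b_i}\cdot F=(-K_X)\cdot F=2$ on a conic fibre $F$ giving (iv). The consistency with Theorem~\ref{level surface}(b) comes from the $\iota$-quotient, under which $A_{i,j}$ maps to $\breve{\mathcal C}_{i,j}$ and $E'_{b_i}$ to $E_{\bold b_i}$.

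The main obstacle is step (b): proving that each component is precisely the blow-up of $X$ at the four nodes rather than some other birational model, and controlling $\breve{\mathcal C}_{i,j}$ and $E_{\bold b_i}$ over the singular fibres. This is exactly where the explicit total dual VMRT analysis of $K_\be$ enters, and the same analysis underlies the verification in (a) that $\mu_B$ resolves precisely the sixteen Kummer nodes.
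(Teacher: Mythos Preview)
Your overall strategy is sound, but there is a genuine circularity in part~(a). You deduce the Kummer structure of $K_\be$ from Theorem~\ref{level surface}(a), which asserts that $\bar S_e$ is a Jacobian. In the paper, however, that very assertion is obtained \emph{from} the Kummer structure of $K_\be$: one first proves directly (adjunction $K_{\PP(T_X)}=-2\zeta$, branch divisor of $\pi_\be$ in $|-2K_X|$, sixteen $(-2)$-curves from the base locus) that a general $K_\be$ is a K3 of Kummer type, then contracts the sixteen $(-2)$-curves to a nodal Kummer quartic and takes the classical abelian double cover to obtain $\bar S_e$. Only after this is the Hamiltonian-vector-field argument invoked to pass from ``general $\be$'' to ``all $\be\notin\{\bold b_1,\dots,\bold b_5\}$''. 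So your proposed route for (a) presupposes exactly what the paper is trying to establish; to make it non-circular you would need an independent proof that $\bar S_e$ is abelian, which the paper does not provide.

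For part~(b) your cone-and-rulings picture is a genuinely different and attractive viewpoint compared with the paper's, which instead imports the total dual VMRT classes from \cite{HLS} and reads off $[\breve{\mathcal C}_{i,1}]+[\breve{\mathcal C}_{i,2}]=2\zeta$ directly, then uses Lemma~\ref{l.double cover} to see that $\pi|_{\breve{\mathcal C}_{i,j}}$ contracts exactly the fibre over each node of a singular conic, giving the blow-up description (Lemma~\ref{l.VMRT fibration}). Your Euler-characteristic count $e(X)=4+s$ is fine, and identifying $E_{\bold b_i}\in|-K_X|$ as the half of the degenerated branch divisor is essentially what the paper does. But your argument for (iv) is incomplete: $E_{\bold b_i}\cdot F=2$ gives only the intersection number, not that the two points are \emph{distinct} on smooth fibres and collapse to \emph{one} point on the exceptional curve over each node. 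The paper handles this by a direct geometric argument (Lemma~\ref{l.intersection of E and fibers}): a smooth member of one pencil admits exactly two tangent lines from the relevant base point, which coalesce precisely when the conic degenerates. You would need an analogous statement, and you also need to justify that $E_{\bold b_i}$ is smooth, which does not follow from $E_{\bold b_i}\in|-K_X|$ alone.
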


This theorem is proved by Lemma~\ref{singular fiber} and Corollary~\ref{final corollary}.

\medskip

{\bf Acknowledgements.}
Both authors would like to thank Jun-Muk Hwang to explain us the results of his papers and helpful comments, and would like to thank Arnaud Beauville for his interest and useful comments.

\bigskip

 \section{Lagrangian fibration structure on the contangent bundle}  


In order to show Theorem 1.1, we first describe the members of $H^0(X, \Sym^2T_{X})$ in terms of local parameters of $T_X^*$.

\begin{notation}\label{n.local}  Consider $\mathbb P^2=\mathbb P^2_{x_0,x_1,x_2}$ which means that $[x_0,x_1,x_2]$ is a  homogeneous coordinate system  of $\mathbb P^2$.  Set $x=\frac{x_1}{x_0}$ and $y=\frac{x_2}{x_0}$. Let $U_0=\mathbb A^2_{x,y}\subset \mathbb P^2$ be the affine open subset defined by $x_0\neq 0$. Take one $H\in H^0(U_0, \Sym^2T_{\mathbb P^2})$. Then we can write $H$ uniquely as 
	\begin{equation}\label{eq.local equation}H=f(x,y)\left(\frac{\partial}{\partial x}\right)^2+g(x,y)\left(\frac{\partial}{\partial y}\right)^2+h(x,y)\left(\frac{\partial}{\partial x}\right)\left(\frac{\partial}{\partial y}\right) \end{equation}
	where \begin{center}
		$f(x,y)=\sum_{i,j}f_{i,j}x^iy^j$, \  $g(x,y)=\sum_{i,j}g_{i,j}x^iy^j$, \  $h(x,y)=\sum_{i,j}h_{i,j}x^iy^j$ $\in\mathbb C[x,y]$. 
	\end{center}
\end{notation}

\subsection{Description of $H^0(\PP^2, \Sym^2T_{\PP^2})$}
\begin{lemma}\label{l.projective plane}In the situation of Notaion \ref{n.local},  
	$H$ is in $H^0(\mathbb P^2, \Sym^2T_{\mathbb P^2})$ if and only if  $\deg f$, $\deg g$, $\deg h\leq 4$, and the following 18 linear forms of the coefficients of $H$ vanish:

{	$$h_{0,4},\ h_{1,3}-2g_{0,4},\ h_{2,2}-2g_{1,3},\  h_{3,1}-2g_{2,2},\  h_{4,0}-2g_{3,1},\  g_{4,0}, $$ 
	$$f_{0,3},\ f_{1,2}-h_{0,3},\ f_{2,1}+g_{0,3}-h_{1,2},\  f_{3,0}+g_{1,2}-h_{2,1},\ g_{2,1}-h_{3,0},\  g_{3,0},$$  $$  f_{0,4},\ f_{1,3},\ f_{2,2}-g_{0,4},\ f_{3,1}-g_{1,3},\  f_{4,0}-g_{2,2},\  g_{3,1}.$$}
\end{lemma}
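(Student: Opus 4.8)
\medskip

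The plan is to decide when the section $H$, a priori defined only over $U_0$, extends to a global section of $\Sym^2 T_{\PP^2}$. Since the three standard affine charts cover $\PP^2$ and $H$ is regular on $U_0$ by construction, it suffices to check that $H$ has no poles along $\{x_0=0\}$, that is, that its expressions in the charts $U_1=\{x_1\ne 0\}$ and $U_2=\{x_2\ne 0\}$ are regular. The involution $x_1\leftrightarrow x_2$ (equivalently $x\leftrightarrow y$) exchanges these two charts and acts on the coefficients of $H$ by $f_{ij}\leftrightarrow g_{ji}$ and $h_{ij}\leftrightarrow h_{ji}$, so I would run the computation only on $U_1$ and then read off the $U_2$-conditions by this substitution.

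First I would record the change of frame. With $(u,v)=(x_0/x_1,x_2/x_1)=(1/x,\,y/x)$ one has $x=1/u$, $y=v/u$, and
\[
\partial_x=-u^2\partial_u-uv\,\partial_v,\qquad \partial_y=u\,\partial_v.
\]
Squaring and multiplying out in $\Sym^2 T_{\PP^2}$, the section $H$ takes the form $\tilde f\,\partial_u^2+\tilde h\,\partial_u\partial_v+\tilde g\,\partial_v^2$ with
\[
\tilde f=u^4 f,\qquad \tilde h=u^3\!\left(2fv-h\right),\qquad \tilde g=u^2\!\left(fv^2+g-hv\right),
\]
where now $f,g,h$ are read as Laurent expressions in $(u,v)$ through $f(x,y)=\sum f_{ij}u^{-i-j}v^j$, and similarly for $g$ and $h$.

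Next I would impose that none of $\tilde f,\tilde g,\tilde h$ contains a negative power of $u$. As $\tilde f=\sum f_{ij}u^{4-i-j}v^j$ depends on $f$ alone, its regularity is exactly $\deg f\le 4$; inserting this into $\tilde h$ forces $\deg h\le 4$, and then $\tilde g$ forces $\deg g\le 4$. Once the degrees are bounded, the only residual poles are the $u^{-1}$-term of $\tilde h$ and the $u^{-1}$- and $u^{-2}$-terms of $\tilde g$. Vanishing of the $u^{-1}$-coefficient of $\tilde h$, collected by powers of $v$, produces six linear relations of weight $4$ among the $f_{ij}$ and $h_{ij}$; under $x\leftrightarrow y$ these become precisely the first displayed block (the one in $g$ and $h$). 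Vanishing of the $u^{-1}$-coefficient of $\tilde g$ gives the six weight-$3$ relations of the middle block, and vanishing of its $u^{-2}$-coefficient gives six further weight-$4$ relations.

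The crux of the argument is to show that this seemingly over-determined list is equivalent to the eighteen relations in the statement. Here the two charts must be played against each other: substituting the first block (the $\tilde h$-relations pulled over from $U_2$) into the $u^{-2}$-relations of $\tilde g$ lets them collapse, term by term, to the third block (the one in $f$ and $g$), after which the $U_1$-relations coming from $\tilde h$ are seen to be consequences of the first and third blocks. This bookkeeping, checking that every chart condition reduces to one of the listed eighteen and that nothing further survives, is the only real obstacle. It can be cross-checked by a dimension count: the coefficients of $f,g,h$ of degree $\le 4$ number $3\cdot 15=45$, and $45-18=27=h^0(\PP^2,\Sym^2 T_{\PP^2})$, the last value following from the symmetric square $0\to\cO(1)^{\oplus 3}\to\cO(2)^{\oplus 6}\to \Sym^2 T_{\PP^2}\to 0$ of the Euler sequence; this confirms that the eighteen relations are independent.
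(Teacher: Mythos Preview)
Your argument is correct and follows the same basic plan as the paper: pass to another affine chart, rewrite $H$ in the new frame, and read off the regularity conditions. The one substantive difference is in how sufficiency is handled. The paper works only in the chart $U_2$, extracts the eighteen relations there, and then disposes of the converse in one line: if $H$ satisfies the relations then it is holomorphic on $U_0\cup U_2=\PP^2\setminus\{(0{:}1{:}0)\}$, hence extends across the missing point by Hartogs/normality. You instead check both $U_1$ and $U_2$ directly (exploiting the $x\leftrightarrow y$ symmetry), which forces you into the bookkeeping step of showing that the combined relations from both charts collapse to the listed eighteen. Your route avoids the extension argument and is in that sense more elementary, at the price of the redundancy check; the paper's route is shorter but uses the codimension-two extension fact. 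Two small remarks: the $u^{-2}$-coefficient of $\tilde g$ actually produces seven relations rather than six (the two extreme ones, $g_{4,0}$ and $f_{0,4}$, are redundant with the other blocks), and it is worth noting explicitly that the $U_2$-version of the $\tilde g$-conditions coincides, as a set, with the $U_1$-version, so that no further reduction is needed there. The dimension count you give is exactly the cross-check the paper records in the remark following the lemma.
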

\begin{proof}
	Assume that $H\in H^0(\mathbb P^2, \Sym^2T_{\mathbb P^2}) $. 
	
	Let $u=\frac{x_0}{x_2}$ and $v=\frac{x_1}{x_2}$. Then $u$ and $v$ form an affine coordinate system on the affine open subset $U_2=\mathbb A^2_{u,v}\subset\mathbb P^2$ given by $x_2\neq 0$.  Since 
	$x=\frac{v}{u}$, $y=\frac{1}{u}$, $u=\frac{1}{y}$, $v=\frac{x}{y}$,
	we have  
	$$\frac{\partial}{\partial x}=\frac{\partial u}{\partial x}\frac{\partial}{\partial u}+\frac{\partial v}{\partial x}\frac{\partial}{\partial v}=\frac{1}{y}\frac{\partial}{\partial v}=u\frac{\partial}{\partial v} $$
	and $$\frac{\partial}{\partial y}=\frac{\partial u}{\partial y}\frac{\partial}{\partial u}+\frac{\partial v}{\partial y} \frac{\partial}{\partial v}=-\frac{1}{y^2}\frac{\partial}{\partial u}-\frac{x}{y^2}\frac{\partial}{\partial v}=-u^2\frac{\partial}{\partial u}-uv\frac{\partial}{\partial v}.$$
	Therefore  
	\begin{align*}H|_{U_2}&=f(\frac{v}{u},\frac{1}{u})u^2\left(\frac{\partial}{\partial v}\right)^2+g(\frac{v}{u},\frac{1}{u})\left(u^2\frac{\partial}{\partial u}+uv\frac{\partial}{\partial v}\right)\\&\hspace{3cm}-h(\frac{v}{u},\frac{1}{u})\left(u^3\left(\frac{\partial}{\partial u}\right)\left(\frac{\partial}{\partial v}\right)+u^2v\left(\frac{\partial}{\partial v}\right)^2\right)\\
		&=g(\frac{v}{u},\frac{1}{u})u^4\left(\frac{\partial}{\partial u}\right)^2+\left\{f(\frac{v}{u},\frac{1}{u})u^2+g(\frac{v}{u},\frac{1}{u})u^2v^2-h(\frac{v}{u},\frac{1}{u})u^2v\right\}\left(\frac{\partial}{\partial v}\right)^2
		\\&\hspace{3cm}+\left\{ 2g(\frac{v}{u},\frac{1}{u})u^3v- h(\frac{v}{u},\frac{1}{u}) u^3 \right\}\left(\frac{\partial}{\partial u}\right)\left(\frac{\partial}{\partial v}\right)
	\end{align*}	
	Since $H|_{U_2}\in H^0(U_2, \Sym^2T_{\mathbb P^2}) $,  the coefficients of 
	\begin{center}$\left(\frac{\partial}{\partial u}\right)^2$, $\left(\frac{\partial}{\partial v}\right)^2$, and $\left(\frac{\partial}{\partial u}\right)\left(\frac{\partial}{\partial v}\right)$\end{center}
 appearing in $H|_{U_2}$ above  are holomorphic functions in $u,v$. 
	Therefore $$g(\frac{v}{u},\frac{1}{u})u^4$$ which is the coefficient of $\left(\frac{\partial}{\partial u}\right)^2$ in $H|_{U_2}$ is holomorphic, and hence  
	\begin{center}
		$g_{i,j}=0$ for all $i,j$ with $i+j\geq 5$
	\end{center}
	 which means that  $\deg g\leq 4$. 
	 
	We have the following equalities:
	\begin{align*}
		&2g(\frac{v}{u},\frac{1}{u})u^3v- h(\frac{v}{u},\frac{1}{u}) u^3
		\\&=2\sum g_{i,j}(\frac{v}{u})^i(\frac{1}{u})^ju^3v-\sum h_{i,j}(\frac{v}{u})^i(\frac{1}{u})^j u^3
		\\&=2\sum g_{i,j}u^{3-i-j}v^{i+1}-\sum  h_{i,j}u^{3-i-j}v^i  \hspace{4cm}(*)
	\end{align*}
	From the same arguments as above, we can see that 
	the  coefficients of $u^{-l}v^k$ for $l>0$ and $k\geq 0$ in $(*)$   vanish which implies
	
	\begin{center}
		$h_{i,j}=0$ for all $i,j$ with $i+j\geq 5$ 
	\end{center}
	and 
	\begin{center}$h_{0,4}=0$, $h_{1,3}=2g_{0,4}$, $h_{2,2}=2g_{1,3}$, $h_{3,1}=2g_{2,2}$, $h_{4,0}=2g_{3,1}$, $g_{4,0}=0$. 
	\end{center}
	We have   the  following equalities: 
	\begin{align*}
		&f(\frac{v}{u},\frac{1}{u})u^2+g(\frac{v}{u},\frac{1}{u})u^2v^2-h(\frac{v}{u},\frac{1}{u})u^2v\\&=\sum f_{i,j}(\frac{v}{u})^i(\frac{1}{u})^ju^2+\sum g_{i,j}(\frac{v}{u})^i(\frac{1}{u})^ju^2v^2-\sum h_{i,j}(\frac{v}{u})^i(\frac{1}{u})^j u^2v\\
		&=\sum f_{i,j}u^{2-i-j}v^i+\sum g_{i,j}u^{2-i-j}v^{i+2}-\sum h_{i,j}u^{2-i-j}v^{i+1} \hspace{1cm}(**)
	\end{align*} By the same reason as before,  it follows that 
	the coefficients of $u^{-l}v^k$ with $l>0$ and $k\geq 0$ in $(**)$  vanish. This shows  that 
	
	\begin{center}
		$f_{i,j}=0$ for all $i+j\geq 5$
	\end{center}
	and 
	\begin{center}
		$f_{0,3}=f_{1,2}-h_{0,3}=f_{2,1}+g_{0,3}-h_{1,2}=f_{3,0}+g_{1,2}-h_{2,1}=g_{2,1}-h_{3,0}=g_{3,0}=0$, 
	\end{center}
	
	\begin{center}
		$f_{0,4}=f_{1,3}-h_{0.4}=f_{2,2}+g_{0,4}-h_{1,3}=f_{3,1}+g_{1,3}-h_{2,2}=f_{4,0}+g_{2,2}-h_{3,1}=g_{3,1}-h_{4,0}=g_{4,0}=0$.
	\end{center}
So we obtained  all the 18 linear relations in our lemma. 

	Coversely if $H$ satisfies the conditions in this lemma, then $H$ is holomorphic on  $\mathbb P^2\setminus \{(0:0:1)\}$ and hence it can be holomorphically extended to all $\mathbb P^2$. 
\end{proof}

\begin{remark} The above 18 linear relations  in Lemma \ref{l.projective plane} are independent and thus 
	$h^0(\mathbb P^2, \Sym^2T_{\mathbb P^2})=27$. The dimension can be also computed from the Euler sequence of $T_{\PP^2}$. From the exact sequence
	$$0\to\cO_{\PP^2}\to \cO_{\PP^2}(1)^{\oplus 3}\to T_{\PP^2}\to 0,$$
	we have
	$$0\to\cO_{\PP^2}(1)^{\oplus 3}\to {\rm Sym}^2(\cO_{\PP^2}(1)^{\oplus 3})\to {\rm Sym}^2T_{\PP^2}\to 0.$$

	We also remark that 
	Lemma \ref{l.projective plane} is equivalent to the following statement: $H\in H^0(\mathbb P^2, \Sym^2T_{\mathbb P^2})$ if and only if $\deg f, \deg g,\deg h\leq 4$, and 
	{\small\begin{align*}
		\frac{1}{x^2}f_{4}(x,y)&=\frac{1}{y^2}g_4(x,y)\\
		f_4(x,y)&=x^2(f_{4,0}x^2+f_{3,1}xy+f_{2,2}y^2)=\frac{x^2}{y^2}g_4(x,y)
		\\g_4(x,y)&=y^2(g_{0,4}y^2+g_{1,3}xy+g_{2,2}x^2)=\frac{y^2}{x^2}f_4(x,y)\\
		h_4(x,y)&=xy(h_{1,3}y^2+h_{2,2}xy+h_{3,1}x^2y)=\frac{2x}{y}g_4(x,y)=\frac{2y}{x}f_4(x,y)=\frac{y}{x}f_4(x,y)+\frac{x}{y}g_4(x,y).
	\end{align*}
	and 
	\begin{align*}
		f_3(x,y)&=x(f_{3,0}x^2+f_{2,1}xy+f_{1,2}y^2)\\
		g_3(x,y)&=y(g_{2,1}x^2+g_{1,2}xy+g_{0,3}y^2)\\
		h_3(x,y)&=h_{3,0}x^3+h_{2,1}x^2y+h_{1,2}xy^2+h_{0,3}y^3\\
		&=g_{2,1}x^3+(f_{3,0}+g_{1,2})x^2y+(f_{1,2}+g_{0,3})xy^2+f_{1,2}y^3\\
		&=\frac{y}{x}f_3(x,y)+\frac{x}{y}g_3(x,y).
	\end{align*}}
\end{remark}

\medskip

\subsection{Description of $H^0(X, \Sym^2T_{X})$}
Let $\mu_p: Y={\rm Bl}_p\mathbb P^2\rightarrow \mathbb P^2=\mathbb P^2_{x_0,x_1,x_2}$ be the blow-up at $p=(1:a:b)$. Then 

$$H^0(Y, \Sym^2T_Y)\subset H^0(\mathbb P^2, \Sym^2T_{\mathbb P^2})$$

Take  $H\in H^0(\mathbb P^2, \Sym^2T_{\mathbb P^2})$.  For the affine open subset  $U_0\subset \mathbb P^2$ defined by $x_0\neq 0$, write $H|_{U_0}$ as in Notaion \ref{n.local} so that it satisfies the properties in Lemma \ref{l.projective plane}.  

\begin{lemma}\label{l.blow-up}The above $H$ is in  $H^0(Y, \Sym^2T_Y)$ 
	if and only if 
	
	$$f(a,b)=g(a,b)=h(a,b)=g_x(a,b)=f_y(a,b)=0$$ and  $$g_y(a,b)-h_x(a,b)=f_x(a,b)-h_y(a,b)=0.$$
	
	
\end{lemma}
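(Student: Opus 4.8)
The plan is to compute local generators of the tangent sheaf on the blow-up $Y = \mathrm{Bl}_p \mathbb{P}^2$ near the exceptional divisor, lift a given $H \in H^0(\mathbb{P}^2, \mathrm{Sym}^2 T_{\mathbb{P}^2})$ through the blow-down map, and determine exactly when the lift extends holomorphically across the exceptional curve. First I would normalize so that the blown-up point $p = (1:a:b)$ lies in the affine chart $U_0 = \mathbb{A}^2_{x,y}$, and I would translate coordinates by setting $\xi = x - a$, $\eta = y - b$, so that the point becomes the origin and $H$ takes the form \eqref{eq.local equation} with $f, g, h$ re-expanded as power series (equivalently polynomials) in $\xi, \eta$. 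The key geometric input is that $T_Y$ is the sheaf of vector fields on $\mathbb{P}^2$ that are \emph{tangent to the point being blown up}, in the sense that, after passing to the blow-up chart, they must not acquire poles along the exceptional divisor $E$.

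The main computation is to introduce blow-up coordinates. On one chart I would set $\xi = s$, $\eta = s t$ (so $E = \{s = 0\}$ and $t$ parametrizes directions), and on the other chart $\xi = s' t'$, $\eta = t'$. I would then rewrite the frame $\partial/\partial \xi, \partial/\partial \eta$ in terms of $\partial/\partial s, \partial/\partial t$ using the chain rule, exactly as was done in the proof of Lemma~\ref{l.projective plane} for the coordinate change between $U_0$ and $U_2$. The relations are $\partial/\partial \xi = \partial/\partial s - (t/s)\,\partial/\partial t$ and $\partial/\partial \eta = (1/s)\,\partial/\partial t$. Substituting these into $H = f (\partial/\partial \xi)^2 + g (\partial/\partial \eta)^2 + h (\partial/\partial \xi)(\partial/\partial \eta)$ and expanding, I would collect the coefficients of $(\partial/\partial s)^2$, $(\partial/\partial t)^2$, and $(\partial/\partial s)(\partial/\partial t)$, then impose that each of these rational functions be holomorphic at $s = 0$ (with the other chart handled symmetrically). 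Because $f, g, h$ vanish to various orders at the origin, the lowest-order terms — the values $f(a,b), g(a,b), h(a,b)$ and the first partials — are precisely what control the appearance of negative powers of $s$.

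I expect the poles along $E$ to be governed by the Taylor expansion of $f, g, h$ up to first order at $(a,b)$: the constant terms must cancel to kill the $1/s^2$ pole and the linear terms must satisfy one linear relation each to kill the $1/s$ pole. Carrying out the bookkeeping, the vanishing of $f(a,b), g(a,b), h(a,b)$ should emerge from the leading ($s^{-2}$ and $s^{-1}$) coefficients in the coefficient of $(\partial/\partial t)^2$, the conditions $g_x(a,b) = 0$ and $f_y(a,b) = 0$ from the two charts' mixed-derivative contributions, and the two symmetric relations $g_y(a,b) - h_x(a,b) = 0$ and $f_x(a,b) - h_y(a,b) = 0$ from matching the surviving first-order terms. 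The converse direction is then the usual Hartogs-type extension argument: if all six conditions hold, the coefficients are holomorphic on both blow-up charts, hence $H$ lifts to a global section of $\mathrm{Sym}^2 T_Y$.

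The hard part will be the careful bookkeeping of the substitution and ensuring that the six stated conditions are \emph{exactly} the obstruction to holomorphy — neither too few (so that some pole survives) nor too many (so that genuine sections are excluded). In particular I would need to verify that the two charts together produce precisely these six independent linear conditions and that the symmetry $\xi \leftrightarrow \eta$ correctly pairs $g_x \leftrightarrow f_y$ and $(g_y - h_x) \leftrightarrow (f_x - h_y)$, rather than producing redundant or extra constraints. Once the local extension criterion across $E$ is established, identifying it with the displayed six equations is a matter of reading off the coefficients, and the inclusion $H^0(Y, \mathrm{Sym}^2 T_Y) \subset H^0(\mathbb{P}^2, \mathrm{Sym}^2 T_{\mathbb{P}^2})$ guarantees that no sections outside the image of Lemma~\ref{l.projective plane} need to be considered.
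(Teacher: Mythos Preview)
Your proposal is correct and follows essentially the same approach as the paper: introduce blow-up coordinates on the two standard charts over $p$, rewrite $\partial/\partial\xi$ and $\partial/\partial\eta$ in terms of the blow-up frame via the chain rule (the paper obtains exactly your formulas $\partial/\partial\xi=\partial/\partial s-(t/s)\partial/\partial t$ and $\partial/\partial\eta=(1/s)\partial/\partial t$), substitute into $H$, and read off the holomorphy conditions along $E=\{s=0\}$. The only cosmetic differences are that the paper normalizes to $p=(1{:}0{:}0)$ rather than translating by $(a,b)$, and that there are seven independent conditions, not six, so your bookkeeping count is off by one.
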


\begin{proof}
	For simplicity we only prove that  if $p=(1:0:0)$, then $H\in H^0(Y, \Sym^2T_Y)$ if and only if  
	\begin{center}
		$f_{0,0}=g_{0,0}=h_{0,0}=g_{1,0}=f_{0,1}=g_{0,1}-h_{1,0}=f_{1,0}-h_{0,1}=0$
	\end{center}The proof for the general case can be done by the same argument. 
	
Let $E$ be the exceptional divisor of $\mu_p$. 	Let $U_0\subset\PP^2=\PP^2_{x_0, x_1, x_2}$ be the affine open neighborhood  of $p$ defined by $x_0\neq 0$. 	
Consider $x=\frac{x_1}{x_0}$ and $y=\frac{x_1}{x_0}$ as an affine coordinate system on $U_0=\mathbb A^2_{x,y}\subset \mathbb P^2$ so that $p=(0,0)$. 
	Then $\mu_p^{-1}(U_0)\subset \mathbb A^2_{x,y}\times \mathbb P^1_{z_0,z_1}$ is defined by $xz_1=yz_0$.  Let $W\subset \mu_p^{-1}(U_0)$ be the open subset given by $z_0\neq 0$.  Set $w=\frac{z_1}{z_0}$ and $r=x$. Then $y=xw$, $W= \mathbb A^2_{r,w}$, and $E\cap W$ is defined by $r=0$ in $W$.  
	
	From the relations
	$$ \frac{\partial}{\partial x}= \frac{\partial r}{\partial x}\frac{\partial}{\partial r}+\frac{\partial w }{\partial x}\frac{\partial}{\partial w}= \frac{\partial}{\partial r}-\frac{y}{x^2}\frac{\partial}{\partial w}= \frac{\partial}{\partial r}-\frac{w}{r}\frac{\partial}{\partial w}  ,$$ and 
	$$\frac{\partial}{\partial y}= \frac{\partial r}{\partial y}\frac{\partial}{\partial r}+\frac{\partial w}{\partial y}\frac{\partial}{\partial w}= \frac{1}{r}\frac{\partial}{\partial w}$$  
	it follows that 
	\begin{align*}
		H|_{W\setminus E}&=f(r,rw)\left(\frac{\partial}{\partial r}-\frac{w}{r}\frac{\partial}{\partial w}\right)^2+g(r,rw)\left(\frac{1}{r}\frac{\partial}{\partial w}\right)^2+h(r,rw)\left(\frac{\partial}{\partial x}-\frac{w}{r}\frac{\partial}{\partial w}\right)\left(\frac{1}{r}\frac{\partial}{\partial w}\right)\\
		&=f(r,rw)\left(\frac{\partial}{\partial r}\right)^2
		+\left\{f(r,rw)\frac{w^2}{r^2}+g(r,rw)\frac{1}{r^2}-h(r,rw)\frac{w}{r^2}   \right\}\left(\frac{\partial}{\partial w}\right)^2
		\\&+\left\{ -2f(r,rw)\frac{w}{r}+h(r,rw)\frac{1}{r}\right\}\left(\frac{\partial}{\partial r}\right)\left(\frac{\partial}{\partial w}\right)
	\end{align*}
	The coefficient of $\left(\frac{\partial}{\partial w}\right)^2$ in $H|_{W\setminus E}$ satisfies  the following equalities:
	\begin{align*}
		&f(r,xw)\frac{w^2}{x^2}+g(r,rw)\frac{1}{r^2}-h(r,rw)\frac{w}{r^2}
		\\&=\sum f_{i,j}r^i(rw)^j\frac{w^2}{r^2}+\sum g_{i,j}r^i(rw)^j\frac{1}{r^2}
		-\sum h_{i,j}r^i(rw)^j\frac{w}{r^2}\\
		&=\sum f_{i,j}r^{i+j-2}w^{j+2}+\sum g_{i,j}r^{i+j-2}w^j-\sum h_{i,j}r^{i+j-2}w^{j+1}
	\end{align*}
	Therefore if $H|_{W\setminus E}$ holomorphically extends to $W$, 
	\begin{center}
		$f_{0,0}=g_{0,0}=h_{0,0}=0$ and  $g_{1,0}=g_{0,1}-h_{1,0}=f_{1,0}-h_{0,1}=f_{0,1}=0$
	\end{center} because $E\cap W$ is defined by $r=0$ in $W$. 
	
	Similarly, from the following equalities 
	\begin{align*}
		&-2f(r,rw)\frac{w}{r}+h(r,rw)\frac{1}{r}
		\\&=-2\sum f_{i,j}r^i(rw)^j\frac{w}{r}+\sum h_{i,j}r^i(rw)^j\frac{1}{r}
		\\&=-2\sum f_{i,j}r^{i+j-1}w^{j+1}+\sum h_{i,j}r^{i+j-1}w^j
	\end{align*}
	it follows that if $H|_{W\setminus E}$ holomorphically extends to $W$ then  $f_{0,0}=h_{0,0}=0$.

	Let $W'\subset \mu_p^{-1}(U)$ be the open subset defined by $z_1\neq 0$.  Let $w=\frac{z_0}{z_1}$.  Then $x=yw$ and $W'=\mathbb A^2_{y,w}$.
	Using the following relations
	$$ \frac{\partial}{\partial x}= \frac{\partial w}{\partial x}\frac{\partial}{\partial w}+\frac{\partial y }{\partial x}\frac{\partial}{\partial y}= \frac{1}{y}\frac{\partial}{\partial w},$$ and 
	$$ \frac{\partial}{\partial y}= \frac{\partial w}{\partial y}\frac{\partial}{\partial w}+\frac{\partial y}{\partial y}\frac{\partial}{\partial y}= -\frac{x}{y^2}\frac{\partial}{\partial w}+\frac{\partial}{\partial y}=-\frac{w}{y}\frac{\partial}{\partial w}+\frac{\partial}{\partial y},$$  we have 
	\begin{align*}
		H|_{W'\setminus E}&=f(yw,y)\left(\frac{1}{y}\frac{\partial}{\partial w}\right )^2+g(yw,y)\left(-\frac{w}{y}\frac{\partial}{\partial w}+\frac{\partial}{\partial y}\right)^2\\&\hspace{2cm}+h(yw,y)\left(\frac{1}{y}\frac{\partial}{\partial w}\right)\left(-\frac{w}{y}\frac{\partial}{\partial w}+\frac{\partial}{\partial y}\right)\\
		&=\left\{ f(yw,y)\frac{1}{y^2}+g(yw,y)\frac{w^2}{y^2}-h(yw,y)\frac{w}{y^2}\right\}\left(\frac{\partial}{\partial w}\right)^2
		+g(yw,y)  \left(\frac{\partial}{\partial y}\right)^2
		\\&\hspace{2cm}+\left\{ -2g(yw,y)\frac{w}{y}+h(yw,y)\frac{1}{y}\right\}\left(\frac{\partial}{\partial y}\right)\left(\frac{\partial}{\partial w}\right).
	\end{align*}
	The coefficient of $\left(\frac{\partial}{\partial w}\right)^2$ in 	$H|_{W'\setminus E}$ satisfies the following equalities:
	\begin{align*}
		&f(yw,y)\frac{1}{y^2}+g(yw,y)\frac{w^2}{y^2}-h(yw,y)\frac{w}{y^2}
		\\&=\sum f_{i,j}(yw)^iy^j\frac{1}{y^2}\sum g_{i,j}(yw)^iy^j\frac{w^2}{y^2}-\sum h_{i,j}(yw)^iy^j\frac{w}{y^2}\\
		&=\sum f_{i,j} y^{i+j-2} w^i+\sum_{i,j}g_{i,j}y^{i+j-2}w^{i+2}-\sum h_{i,j}y^{i+j-2}w^{i+1}
	\end{align*} 
     and thus if  $H|_{W'\setminus E}$ holomorphically extends to $W'$ then 
	\begin{center}
		$f_{0,0}=g_{0,0}=h_{0,0}=0$ and $f_{0,1}=f_{1,0}-h_{0,1}=g_{0,1}-h_{1,0}=g_{1.0}=0.$
	\end{center}
\end{proof}


\begin{lemma}\label{l.coordinate setting}
Let $p_1,\ldots,p_5$ be five distinct points in $\mathbb P^2$ in general position, i.e., no three of them lie in a line. Then we can choose a homogeneous coordinate system on $\mathbb P^2$  so that  
$p_1=(1:0:0)$, $p_2=(1:1:0)$, $p_3=(1:0:1)$, $p_4=(1:1:-1)$ or $(1:1:-1/2)$, and  $p_5=(1:a:b)$ for some $a,b\in\mathbb C$. 
\end{lemma}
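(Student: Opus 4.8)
The plan is to exploit the action of the projective linear group $\mathrm{PGL}_3(\mathbb{C})$ on $\mathbb{P}^2$, since a change of homogeneous coordinate system is exactly an element of this group. The governing principle is the fundamental theorem of projective geometry: $\mathrm{PGL}_3(\mathbb{C})$ acts simply transitively on the set of ordered \emph{projective frames}, i.e.\ ordered $4$-tuples of points in general position (no three collinear). First I would check that the two prescribed target $4$-tuples
$$\big((1:0:0),\,(1:1:0),\,(1:0:1),\,(1:1:-1)\big)\quad\text{and}\quad\big((1:0:0),\,(1:1:0),\,(1:0:1),\,(1:1:-1/2)\big)$$
are indeed frames, which amounts to verifying that the relevant $3\times 3$ determinants of coordinate vectors are nonzero. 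Since the hypothesis that no three of $p_1,\dots,p_5$ are collinear forces $p_1,p_2,p_3,p_4$ to be a frame as well, there is a unique coordinate change $\gamma$ carrying $p_1,p_2,p_3,p_4$ to the first target frame.

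The point that requires care --- and the reason two normal forms for $p_4$ appear --- is the position of the fifth point. After applying $\gamma$, the lemma demands that $p_5$ lie in the affine chart $\{x_0\neq 0\}$ so that it can be written as $(1:a:b)$; but a priori $\gamma(p_5)$ could fall on the line at infinity $\{x_0=0\}$. To handle this I would introduce the explicit element
$$h=\begin{pmatrix} 1 & 1 & 0\\ 0 & 2 & 0\\ 0 & 0 & 1\end{pmatrix}\in \mathrm{PGL}_3(\mathbb{C}),$$
which fixes the triple $(1:0:0),(1:1:0),(1:0:1)$ and sends $(1:1:-1)$ to $(1:1:-1/2)$, so that $h\circ\gamma$ realizes precisely the second normalization. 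Writing $\gamma(p_5)=(s:q:r)$, the first normalization fails exactly when $s=0$, while the second fails exactly when the first coordinate of $h(s:q:r)=(s+q:2q:r)$ vanishes, i.e.\ when $s+q=0$.

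The crux is then to argue that these two failures cannot occur simultaneously. Indeed, if $s=0$ and $s+q=0$ both held, then $q=0$ and $\gamma(p_5)=(0:0:1)$, which lies on the line $\{x_1=0\}$ joining $\gamma(p_1)=(1:0:0)$ and $\gamma(p_3)=(1:0:1)$; since $\gamma$ preserves collinearity, this would force $p_1,p_3,p_5$ to be collinear, contradicting general position. Hence at least one of the two frames leaves $p_5$ in the chart $\{x_0\neq 0\}$, and taking $(a,b)$ to be its affine coordinates completes the argument. I expect the only genuine subtlety to be exactly this last step: recognizing that the apparent ambiguity ``$(1:1:-1)$ or $(1:1:-1/2)$'' is not a weakness of the statement but precisely the device needed to keep $p_5$ finite, with the general position of $\{p_1,p_3,p_5\}$ being what guarantees that one of the two choices always succeeds. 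The remaining verifications --- that the target tuples are frames and the explicit form of $h$ --- are routine linear algebra.
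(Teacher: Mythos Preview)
Your argument is correct and in fact cleaner than the paper's. Both proofs begin by sending $(p_1,p_2,p_3,p_4)$ to the standard frame $\big((1{:}0{:}0),(1{:}1{:}0),(1{:}0{:}1),(1{:}1{:}-1)\big)$ and then worry about whether $p_5$ lands on $\{x_0=0\}$. The paper, however, handles this bad case by a family of upper--triangular matrices $M$ depending on parameters $x,y,z$, chosen so that $p_5$ is forced to $(1{:}1{:}-1)$ (effectively swapping the roles of $p_4$ and $p_5$); it must then solve the equations $y+bz=x+y=-b(x+z)$, check when the new fourth point becomes infinite, isolate the exceptional case $b^2+b+1=0$, and repeat with a second system of equations. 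Your approach replaces all of this by a single explicit element $h\in\mathrm{PGL}_3(\mathbb C)$ fixing $p_1,p_2,p_3$ and carrying $(1{:}1{:}-1)$ to $(1{:}1{:}-1/2)$, together with the observation that the two obstructions $s=0$ and $s+q=0$ force $\gamma(p_5)=(0{:}0{:}1)$, which lies on the line $\{x_1=0\}$ through $\gamma(p_1)$ and $\gamma(p_3)$ and hence is excluded by general position. This buys you a shorter argument with no case analysis on roots of a quadratic, and it respects the labeling of the $p_i$ exactly as in the statement, whereas the paper's proof tacitly permutes $p_4$ and $p_5$ along the way.
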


\begin{proof}
Clearly we can choose a homogeneous coordinate system $x_0,x_1,x_2$ on $\mathbb P^2$  so that  
\begin{center}$p_1=(1:0:0)$, $p_2=(1:1:0)$, $p_3=(1:0:1)$ and $p_4=(1:1:-1)$.\end{center}
Assume that $p_5=(0:1:b)$. Set 
$$M=\begin{bmatrix}
x& y &z\\
0 & x+y& 0\\
0&0& x+z
\end{bmatrix}.
$$ 
Let us change the homogeneous coordinate system $x_0,x_1,x_2$  by the linear transform on $\mathbb P^2$ given by a matrix of the from $M$ above such that 
$$y+bz=x+y=-b(x+z)\neq 0.$$
In this new coordinates, we have 
\begin{center}$p_1=(1:0:0)$, $p_2=(1:1:0)$, $p_3=(1:0:1)$,   $p_5=(1:1:-1)$\end{center}
and $$p_4=(x+y-z:x+y:-(x+z)).$$
Assume that   $x+y-z=0$. Then 
$$x+y-z=bz-(b^2+b)z-z=-(b^2+b+1)z=0.$$
Since $z\neq 0$ we have 
$$b^2+b+1=0.$$

Let us change the initial homogeneous  coordinate system $x_0,x_1,x_2$ on $\mathbb P^2$ by a matrix of the form  $M$  above such that $$y+bz=x+y=-2b(x+z)\neq 0.$$
In this new coordinates, we have 
\begin{center}
$p_1=(1:0:0)$, $p_2=(1:1:0)$, $p_3=(1:0:1)$, $p_5=(1:1:-1/2)$     
\end{center}
and $$p_4=(x+y-z:x+y:-(x+z)).$$  Then
$$x+y-z=bz-(2b^2+3b)z-z=-(2b^2+2b+1)z\neq 0$$ because $b^2+b+1=0$ and $z\neq 0$. 
We get our lemma.
\end{proof}

\medskip
\subsection{Tangents of Lagrangian fibration}

Let   $\mu:X=\rm{Bl}_{p_1,\ldots,p_5}\mathbb P^2\rightarrow \mathbb P^2$ be the blow up at  five   points $p_1,\ldots,p_5\in\mathbb P^2$  in general position. Let $E_i\subset X$ be the exceptional curve over $p_i$.

Take any two independent sections $H,G\in  H^0(X, \Sym^2T_X)$. Then we can consider $H$ and $G$ as regular funtions on $T_X^*$ so that they define a morphism   
 \begin{center}$\Phi:T_X^*\rightarrow \mathbb C^2$, $q\mapsto (H(q),G(q))$.\end{center} 
We recall that for each $e\in\mathbb C^2$, the fiber  $S_e=\Phi^{-1}(e)$ is called a level surface.  
	
 By Lemma \ref{l.coordinate setting}, we can  choose a homogeneous coordinate system  $x_0,x_1,x_2$ on $\mathbb P^2$ so that  
\begin{center}
    $p_1=(1:0:0)$, $p_2=(1:1:0)$, $p_3=(1:0:1)$, $p_4=(1:\alpha:\beta)$ ,  and  $p_5=(1:a:b)$ 
\end{center}
for some $a,b\in\mathbb C$, and $(\alpha,\beta)=(1,-1)$ or $(1,-1/2)$.

Let $U_0\subset \mathbb P^2=\mathbb P^2_{x_0,x_1,x_2}$ be the affine open subset defined by $x_0\neq 0$. Set $x=\frac{x_1}{x_0}$, $y=\frac{x_2}{x_0}$, 
$u=\frac{\partial}{\partial x}$ and $v=\frac{\partial}{\partial y}$. 
Let $U:=\Pi^{-1}(\mu^{-1}(U_0)\setminus \cup_{i=1}^5 E_i)$. Here $\Pi:T^*_X\rightarrow X$ is the projection morphism.    We can consider the restrictions  $H|_{U}$ and $G|_{U}$  as members in $\mathbb C[x,y,u,v]$.

\bigskip
Let us consider the canonical symplectic two form $\omega$ on $T_X^*$: $\omega|_{U}$ can be expressed as
$$ \omega|_{U}=dx\wedge du+dy\wedge dv.$$

\begin{lemma}\label{l.Lagrangian condition}Take  $e\in\mathbb C^2$ and  $q\in S_e\cap U$. If $\dim T_qS_e=2$ and $$H_y(q)G_v(q)-H_v(q)G_y(q)+H_x(q)G_u(q)-H_u(q)G_x(q)=0$$ then 
	$\omega|_{T_qS_e}=0$. (Here $H_x(q)=\frac{\partial{H}}{\partial x}(q)$, $H_y(q)=\frac{\partial{H}}{\partial y}(q)$, and so on.) 
\end{lemma}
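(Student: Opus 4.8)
The plan is to realize the bracket-type expression in the hypothesis as the pairing $\omega(X_H,X_G)$ of the Hamiltonian vector fields of $H$ and $G$, and then to show that these two fields span $T_qS_e$. First I would introduce, on $U$ with coordinates $(x,y,u,v)$ and $\omega|_U=dx\wedge du+dy\wedge dv$, the Hamiltonian vector field of $H$, namely
$$X_H=H_u\frac{\partial}{\partial x}+H_v\frac{\partial}{\partial y}-H_x\frac{\partial}{\partial u}-H_y\frac{\partial}{\partial v},$$
characterized by $\iota_{X_H}\omega=dH$; a one-line check against $\omega|_U$ confirms this formula, and the analogous $X_G$ is defined the same way.

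Next I would compute the two relevant directional derivatives. One always has $X_H(H)=0$ by antisymmetry, while a direct expansion gives
$$X_H(G)=H_uG_x+H_vG_y-H_xG_u-H_yG_v,$$
which is exactly the negative of the quadratic expression appearing in the hypothesis. Thus the vanishing condition says precisely that $X_H(G)=0$, equivalently $\omega(X_H,X_G)=dH(X_G)=X_G(H)=0$. In particular $X_H$ annihilates both $H$ and $G$ at $q$, so it is tangent to $S_e$; by the symmetric computation $X_G$ is tangent to $S_e$ as well.

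It then remains to see that $X_H$ and $X_G$ actually span $T_qS_e$. Since $\omega$ is nondegenerate, the map $w\mapsto\iota_w\omega$ is a linear isomorphism carrying $X_H,X_G$ to $dH,dG$, so $X_H(q),X_G(q)$ are linearly independent exactly when $dH_q,dG_q$ are. The hypothesis $\dim T_qS_e=2$ forces $\operatorname{rank}d\Phi_q=2$, i.e. $dH_q,dG_q$ independent, whence $X_H(q),X_G(q)$ are two independent vectors lying in the $2$-dimensional space $T_qS_e=\ker dH_q\cap\ker dG_q$ and therefore form a basis of it. Any two elements of $T_qS_e$ are then combinations of $X_H(q),X_G(q)$, so by bilinearity and antisymmetry the value of $\omega$ on $T_qS_e$ is governed by the single number $\omega(X_H,X_G)(q)$, which we have just seen is zero. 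Hence $\omega|_{T_qS_e}=0$.

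The computations here are all routine; the only point requiring care is the sign bookkeeping that identifies the quadratic expression in the statement with $\omega(X_H,X_G)$ — equivalently with the Poisson bracket $\{H,G\}$ — together with the standard remark that the common kernel $\ker dH_q\cap\ker dG_q$ coincides with the $\omega$-span of the Hamiltonian fields once $dH_q,dG_q$ are independent. I do not expect a genuine obstacle beyond this: the statement is exactly the infinitesimal, pointwise form of the Lagrangian condition, and its role is to reduce the proof of Theorem~\ref{t.lagrangian} to verifying the Poisson vanishing $\{H,G\}=0$ for the explicit sections $H,G$, which is the real computational content of the section.
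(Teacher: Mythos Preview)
Your proof is correct and is essentially the same as the paper's, just phrased in the standard symplectic language of Hamiltonian vector fields: the paper's explicit vectors $A$ and $B$ are precisely $-X_H$ and $-X_G$, and the verification that they lie in $T_qS_e$ and satisfy $\omega(A,B)=0$ is the same computation you carry out. Your version is in fact slightly more careful, since you explain why $X_H(q),X_G(q)$ are independent (via the isomorphism $w\mapsto\iota_w\omega$ and the rank of $d\Phi_q$), whereas the paper asserts that $A,B$ form a basis of the two-dimensional space $T_qS_e$ without spelling this out.
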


\begin{proof}
We remark that the tangent space $T_q(S_e)\subset T_q(T_X^*)$ is defined by 
\begin{align}\label{eq.tangent eq 1}
	d_qH&=H_x(q)d_qx+H_y(q)d_qy+H_u(q)d_qu+H_v(q)d_qv=0 \mbox{ and}\\ 
\label{eq.tangent eq 2} d_qG&=G_x(q)d_qx+G_y(q)d_qy+G_u(q)d_qu+G_v(q)d_qv=0. \end{align}

	Assume that  $\dim T_q S_e=2$ and $$H_y(q)G_v(q)-H_v(q)G_y(q)+H_x(q)G_u(q)-H_u(q)G_x(q)=0.$$ Then $$A:=-H_u(q)\frac{\partial}{\partial x}|_q-H_v(q)\frac{\partial}{\partial y}|_q+H_x(q)\frac{\partial}{\partial u}|_q+H_y(q)\frac{\partial}{\partial v}|_q$$
	and $$B:=G_u(q)\frac{\partial}{\partial x}|_q+G_v(q)\frac{\partial}{\partial y}|_q-G_x(q)\frac{\partial}{\partial u}|_q-G_y(q)\frac{\partial}{\partial v}|_q$$ are tangent  vectors in $T_q(S_e)$ because they satisfy the two equations  (\ref{eq.tangent eq 1}) and (\ref{eq.tangent eq 2}). Since $\dim T_q(S_e)=2$, they also form  a basis of $T_q(S_e)$. By our assumption we have 
	$$\omega(A,B)=H_y(q)G_v(q)-H_v(q)G_y(q)+H_x(q)G_u(q)-H_u(q)G_x(q)=0$$ which implies that $\omega|_{T_q(S_e)}=0$. 
\end{proof}

\begin{proposition}\label{p.lagrangian}
For all $q\in S_e^{sm}\cap U$ we have 
$\omega|_{T_q(S_e)}=0.$ 
 Here $S_{e}^{sm}$ denotes the smooth locus of $S_{e}$. 
\end{proposition}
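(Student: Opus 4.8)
The plan is to reduce Proposition~\ref{p.lagrangian} to the pointwise algebraic condition already isolated in Lemma~\ref{l.Lagrangian condition}. Fix $e\in\CC^2$ and a smooth point $q\in S_e^{sm}\cap U$. Since $q$ is smooth on the $2$-dimensional fiber $S_e$, we have $\dim T_q S_e=2$, so by Lemma~\ref{l.Lagrangian condition} it suffices to verify the single scalar identity
\[
H_y(q)G_v(q)-H_v(q)G_y(q)+H_x(q)G_u(q)-H_u(q)G_x(q)=0.
\]
In fact I would prove the stronger statement that the polynomial
\[
P:=H_yG_v-H_vG_y+H_xG_u-H_uG_x\in\CC[x,y,u,v]
\]
vanishes identically on $U$; then the restriction to $q$ is automatic and the hypothesis of Lemma~\ref{l.Lagrangian condition} holds at every point, not just generically.

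To compute $P$ explicitly I would use the local description of $H,G\in H^0(X,\Sym^2 T_X)$ coming from the earlier subsections. Writing $H=f(x,y)u^2+g(x,y)v^2+h(x,y)uv$ as in Notation~\ref{n.local}, with the analogous expression $G=\tilde f u^2+\tilde g v^2+\tilde h uv$, the partial derivatives in $u,v$ are linear in $(u,v)$ while the partials in $x,y$ carry the coefficient polynomials $f_x,f_y,\dots$. Substituting these into $P$ produces a polynomial in $u,v$ whose coefficients are expressions in $f,g,h,\tilde f,\tilde g,\tilde h$ and their first-order partials in $x,y$. The claim is that every such coefficient vanishes once one imposes the constraints that $H$ and $G$ genuinely extend to global sections of $\Sym^2 T_X$, namely the $18$ linear relations of Lemma~\ref{l.projective plane} together with the six vanishing/compatibility conditions at each $p_i$ from Lemma~\ref{l.blow-up}.

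The main obstacle is organizing this into a clean, constraint-free verification rather than a brute-force expansion. Two simplifications make it tractable. First, because $H,G$ are elements of a $2$-dimensional space with basis $\{Q_1,Q_2\}$ by the graded-ring isomorphism~(\ref{eq.graded ring isomorphism}), it is enough to check $P=0$ for the single pair $(H,G)=(Q_1,Q_2)$: the expression $P$ is bilinear and alternating in $(H,G)$, so $P(\lambda_1 Q_1+\lambda_2 Q_2,\mu_1 Q_1+\mu_2 Q_2)=(\lambda_1\mu_2-\lambda_2\mu_1)\,P(Q_1,Q_2)$, and vanishing for the basis pair forces vanishing for all pairs. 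Second, for the concrete quadrics $Q_1,Q_2$ one can read off the local coefficient functions $f,g,h$ directly, reducing the identity $P(Q_1,Q_2)=0$ to an elementary polynomial check in $x,y,u,v$. The delicate point is confirming that the constraints from Lemmas~\ref{l.projective plane} and~\ref{l.blow-up} are exactly what is needed to kill each monomial coefficient; I would verify this degree by degree in $(u,v)$, treating the $u^2$-, $uv$-, and $v^2$-coefficients of $P$ separately, and using the Lemma~\ref{l.blow-up} conditions to handle the terms that are not automatically zero from the $\PP^2$-level relations alone.
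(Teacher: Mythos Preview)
Your overall strategy matches the paper's: reduce via Lemma~\ref{l.Lagrangian condition} to the polynomial identity
\[
R:=H_yG_v-H_vG_y+H_xG_u-H_uG_x=0
\]
and then verify this using the constraints on the coefficients of $H,G$ coming from Lemmas~\ref{l.projective plane} and~\ref{l.blow-up}. The bilinearity/alternation observation is correct but buys little: one still has to check the identity for \emph{some} basis pair, and the paper simply checks it for an arbitrary pair in the constrained $2$-dimensional space.

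The substantive gap is in your second ``simplification.'' The sections $Q_1,Q_2$ are given as quadrics on $\PP^4$, and their realization as elements of $H^0(X,\Sym^2 T_X)$ in the blow-up coordinates $(x,y)$ goes through the identification~(\ref{eq.graded ring isomorphism}); there is no direct formula one can simply ``read off.'' What the paper does instead is to treat $(H,G)$ as an arbitrary pair subject to the constraints: the $18$ relations of Lemma~\ref{l.projective plane} plus the seven conditions at each of the five points $p_i$ from Lemma~\ref{l.blow-up}. After normalizing $p_1,\dots,p_4$ via Lemma~\ref{l.coordinate setting} (two cases for $p_4$) and letting $p_5=(1:a:b)$ vary, the identity becomes \emph{parametric} in $(a,b)$, and the paper verifies it by a Magma computation: one shows that several nonzero multiples of $R$, such as $(a-1)abR$ and $(b-\beta)abR$, lie in the ideal $I$ generated by all the constraints, and a short case analysis in $(a,b)$ removes the multipliers. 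So the check is not a single elementary polynomial identity but a family depending on the moduli of $X$, carried out symbolically rather than by hand.

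A small correction: since $H_u,H_v$ are linear and $H_x,H_y$ are quadratic in $(u,v)$, the polynomial $R$ is of degree $3$ in $(u,v)$; the coefficients to kill are those of $u^3,u^2v,uv^2,v^3$, not of $u^2,uv,v^2$. Your plan to verify these coefficient identities one at a time is in principle sound, but you should expect each of them to depend nontrivially on $(a,b)$ and to require the Lemma~\ref{l.blow-up} conditions at all five blown-up points simultaneously; this is precisely the obstacle the paper handles with computer algebra.
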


\begin{proof}
By Lemma \ref{l.Lagrangian condition} it is enough to show that  for all $q\in S_{e}^{sm}\cap U$,  \begin{center}$H_y(q)G_v(q)-H_v(q)G_y(q)+H_x(q)G_u(q)-H_u(q)G_x(q)=0$. \end{center}

Let us  write $H|_{U}$ and $G|_{U}$ as in  Notation \ref{n.local}  so that 
$$H|_{U}=H(x,y,u,v)=f(x,y)u^2+g(x,y)v^2+h(x,y)uv\in\mathbb C[x,y,u,v]$$ 
and 
$$G|_{U}=G(x,y,u,v)=c(x,y)u^2+d(x,y)v^2+e(x,y)uv\in\mathbb C[x,y,u,v]$$ where 
$$f(x,y)=\sum_{i+j\leq 4}f_{i,j}x^iy^j,\  g=\sum_{i+j\leq 4}g_{i,j}x^iy^j,\  h=\sum_{i+j\leq 4}h_{i,j}x^iy^j$$  and 
$$c(x,y)=\sum_{i+j\leq 4}c_{i,j}x^iy^j,\  d=\sum_{i+j\leq 4}d_{i,j}x^iy^j,\  e=\sum_{i+j\leq 4}e_{i,j}x^iy^j$$ are polynomials in variables $x$ and $y$ 
satisfying  the conditions in  Lemmas \ref{l.projective plane} and \ref{l.blow-up}.

	Set 
 	\begin{align*}R:&=G_v(x,y,u,v)H_y(x,y,u,v)-H_v(x,y,u,v)G_y(x,y,u,v)\\&\hspace{1cm}+H_x(x,y,u,v)G_u(x,y,u,v)-H_u(x,y,u,v)G_x(x,y,u,v) .
	\end{align*} 
	Then $R$ is a member of the polynomial ring $$P:=\mathbb C[f_{i,j}, g_{i,j}, h_{i, j}, c_{i,j}, d_{i,j}, e_{i,j},x,y,u,v,a,b\ |\ i+j\leq 4].$$ Let $I$ be the ideal of $P$ generated by following polynomials given by 
	the conditions in Lemmas \ref{l.projective plane} and \ref{l.blow-up}.
	
{\small	$$h_{0,4}, h_{1,3}-2g_{0,4}, h_{2,2}-2g_{1,3}, h_{3,1}-2g_{2,2}, h_{4,0}-2g_{3,1}, g_{4,0}, $$ 
	$$f_{0,3}, f_{1,2}-h_{0,3}, f_{2,1}+g_{0,3}-h_{1,2}, f_{3,0}+g_{1,2}-h_{2,1}, g_{2,1}-h_{3,0}, g_{3,0},$$  $$  f_{0,4}, f_{1,3}, f_{2,2}-g_{0,4}, f_{3,1}-g_{1,3}, f_{4,0}-g_{2,2}, g_{3,1},$$
	
	$$e_{0,4}, e_{1,3}-2d_{0,4}, e_{2,2}-2d_{1,3}, e_{3,1}-2d_{2,2}, e_{4,0}-2d_{3,1}, d_{4,0}, $$ 
	$$c_{0,3}, c_{1,2}-e_{0,3}, c_{2,1}+d_{0,3}-e_{1,2}, c_{3,0}+d_{1,2}-e_{2,1}, d_{2,1}-e_{3,0}, d_{3,0},$$  $$  c_{0,4}, c_{1,3}, c_{2,2}-d_{0,4}, c_{3,1}-d_{1,3}, c_{4,0}-d_{2,2}, d_{3,1},$$

	$$f(0,0), g(0,0), h(0,0), g_x(0,0), f_y(0,0), g_y(0,0)-h_x(0,0), f_x(0,0)-h_y(0,0),$$
	$$f(1,0), g(1,0), h(1,0), g_x(1,0), f_y(1,0),g_y(1,0)-h_x(1,0), f_x(1,0)-h_y(1,0),$$
	$$f(0,1), g(0,1), h(0,1), g_x(0,1), f_y(0,1), g_y(0,1)-h_x(0,1), f_x(0,1)-h_y(0,1),$$
	$$f(\alpha,\beta), g(\alpha,\beta), h(\alpha,\beta), g_x(\alpha,\beta), f_y(\alpha,\beta), g_y(\alpha,\beta)-h_x(\alpha,\beta), f_x(\alpha,\beta)-h_y(\alpha,\beta),$$
	$$f(a,b), g(a,b), h(a,b), g_x(a,b), f_y(a,b), g_y(a,b)-h_x(a,b), f_x(a,b)-h_y(a,b), {\rm and}$$
	
	$$c(0,0), d(0,0), e(0,0), d_x(0,0), c_y(0,0), d_y(0,0)-e_x(0,0), c_x(0,0)-e_y(0,0),$$
	$$c(1,0), d(1,0), e(1,0), d_x(1,0), c_y(1,0),d_y(1,0)-e_x(1,0), c_x(1,0)-e_y(1,0),$$
	$$c(0,1), d(0,1), e(0,1), d_x(0,1), c_y(0,1), d_y(0,1)-e_x(0,1), c_x(0,1)-e_y(0,1),$$
	$$c(\alpha,\beta), d(\alpha,\beta), e(\alpha,\beta), d_x(\alpha,\beta), c_y(\alpha,\beta), d_y(\alpha,\beta)-e_x(\alpha,\beta), c_x(\alpha,\beta)-e_y(\alpha,\beta),$$
	$$c(a,b), d(a,b), e(a,b), d_x(a,b), c_y(a,b), d_y(a,b)-e_x(a,b), c_x(a,b)-e_y(a,b).$$
}	

\medskip
		For the proof it is enough to show  the following claim.  
		
		\medskip
		Claim: If we fix $a$ and $b$ so that $p_5=(1:a:b)\neq p_i$ for all $i=1,\ldots,4$, then 
		$R$ vanishes if $H$ and $G$ satisfy the relations in Lemmas \ref{l.projective plane} and \ref{l.blow-up}. 	
		\medskip

	By using Magma calculator, we can show that 
	
{\small	\begin{center} $(a-1)abR$, 
		$(a-1)(b+1)aR$, 
		$(a-1)abR$, 
		$(b-\beta)abR$, and
		$(\frac{1-\beta}{\alpha}a +b-1))bR$	\end{center}}\noindent are members in $I$.

		Assume that $a\neq 0$ and $b\neq 0$. Since $p_5\neq p_4$, we have  $a\neq 1$ or $b\neq \beta$ which implies that the claim because $(a-1)abR, (b-\beta)abR\in I$.

		Assume that $a\neq 0$ and $b=0$. Then $a\neq 1$ since  $p_5\neq p_2$. This implies the claim because $(a-1)(b+1)aR\in I$. 
		
		Assume that $a=0$.  Since $p_5\neq p_1, p_3$, we can see  that $b\neq 0,1$. So we get the claim  because $(\frac{1-\beta}{\alpha}a+b-1)bR\in I $. 
\end{proof}

\medskip
\begin{proof}[\bf Proof of Theorem \ref{t.lagrangian}]
Let $X$ be a del Pezzo surface of degree 4. By Lemma \ref{l.coordinate setting}, we may assume that $X$ is the blow-up of $\mathbb P^2_{x_0,x_1,x_2}$ at five distinct  points  $p_1=(1:0:0)$, $p_2=(1:1:0)$, $p_3=(1:0:1)$,  $p_4=(1:\alpha:\beta)$, and $p_5=(1:a:b)$  for some $a,b\in\mathbb C$, and $(\alpha, \beta)=(1,-1)$ or $(1,-1/2)$. 
Since the restriction $\Pi|_{S_e}: S_e\rightarrow X$ is surjective for all $e\in\mathbb C^2$,   $S^{sm}_e\cap U$ forms a dense open subset of $S_e^{sm}$. From this and Proposition \ref{p.lagrangian} we get the theorem. 
\end{proof}

\medskip
\begin{remark}\label{relation} The question on the relation between the Lagrangian fibration structure of the Hitchin map for the case of $g=2$ and the Lagrangian fibration structure of the cotangent bundle of a del Pezzo surface $X$ of degree 4 was raised by Beaville and Brambila-Paz when the second named author gave a talk at the the conference for Fabrizio Catanese's 70th birthday. Let $Z=SU^s_C(2, 1)$ where $C$ is a smooth projective curve of genus 2. By the Hitchin map $h_Z: T^*_Z\to\CC^3=H^0(C, 2K_C)$,
$$\bigoplus_{m=0}^{\infty} H^0(Z, {\rm Sym}^m T_Z)\simeq\CC[F_1, F_2, F_3]$$
where $F_i\in H^0(Z, {\rm Sym}^2 T_Z)$. This is an isomorphism of graded rings. It is well known that $Z$ is a complete intersection of two smooth quadrics $\bar Q_1$ and $\bar Q_2$ in $\PP^5$. More precisely, if a genus two curve $C$ is defined by six Weierstrass points $\lambda_i$ for $i=1,\ldots, 6$ then $Z$ is isomorphic to the complete intersection of two quadrics (cf. \cite{NR}, \cite{Newstead}, \cite{DR})
$$\bar Q_1=\sum_{i=1}^6 X_i^2=0, \quad \bar Q_2=\sum_{i=1}^6 \lambda_iX_i^2=0.$$

The above question is whether we can find a $Z$ such that each fiber $\Phi^{-1}(e)$ of the Lagrangian fibration of a del Pezzo surface $X$ can be embedded naturally into each fiber of the Hitchin map $h_Z: T^*_Z\to\CC^3=H^0(C, 2K_C)$ with $X=Z\cap H$ where $H$ is a hyperplane in $\PP^5$. 

There is a natural identification of the pencil $\mathbb P^1_Q$ of quadrics induced by $\bar Q_1$ and $\bar Q_2$ with $\mathbb P(H^0(C, K_C))$. Also from Theorem 5.1 in \cite{OL19}, there is an also isomorphism of graded rings:
$$\bigoplus_{m=0}^{\infty} H^0(Z, \Sym^m[\Omega^1_Z(1)])\simeq \mathbb C[\bar Q_1,\bar Q_2].$$
Then the preimage $h^{-1}_Z(W)$ of the Hitchin map of the image of a natural embedding $W:={\rm Sym}^2H^0(C, K_C)$ in $\CC^3=H^0(C, 2K_C)$ is the locus of singular spectral curves. This identification is explained in detail in the thesis of Sarbeswar Pal \cite{Pal}. Also recently, Hitchin \cite{Hit21} studies explicitly the Hitchin map $h_Z: T^*_Z\to\CC^3=H^0(C, 2K_C)$.

Then the question is whether the restriction of this $h^{-1}_Z(W)$ over $X$, which is the intersection of $Z$ with some hyperplane section $H$, is the cotangent bundle of $X$. Since we have a natural identification between 
$$\bigoplus_{m=0}^{\infty} H^0(Z\cap H, \Sym^m[\Omega^1_{Z\cap H}(1)])\simeq\CC[\bar Q_1\cap H, \bar Q_2\cap H]\quad {\rm and}$$  
$$\bigoplus_{m=0}^{\infty} H^0(Z, \Sym^m[\Omega^1_Z(1)])\simeq \mathbb C[\bar Q_1,\bar Q_2],$$
and due to the description of an irreducible component of a general fiber of the locus of singular spectral curves (Theorem 1.3 in \cite{HO09}), if the question is true then a general fiber of the locus of singular spectral curves seems to be isomorphic either a $\PP^1$ bundle or an elliptic fiber bundle over $S_e$ (up to \'etale cover) in Theorem 1.2. We cannot answer on this question now because there is no enough study on the $h_Z: T^*_Z\to\CC^3=H^0(C, 2K_C)$. We leave it for the future study.
\end{remark}

\bigskip

\section{Level surfaces in the Lagrangian fibration}

We use the same notations as in the introduction. Let $X$ be a del Pezzo surface of degree 4. 
Let $Q_1$ and $Q_2$ be two quadratic forms in variables $y_1,\ldots,y_5$ defining $X\subset\mathbb P^4=\mathbb P^4_{y_1,\ldots,y_5}$ such that  $\det Q_1=1$. We define the characteristic polynomial $P(t):=\det (tQ_1-Q_2)$, then it satisfies 
$$P(t)=\prod_{i=1}^5(t-\theta_i)$$  
where all $\theta_i\in\mathbb C$ are distinct.

We have a pencil of quadric hypersurfaces in $\mathbb P^4$:
$$\psi:\mathcal Q=\{Q_{\bold e}\}_{\bold e\in\mathbb P^1_{e_1,e_2}}\rightarrow \mathbb P^1_{e_1,e_2}=\mathbb P^1$$
such that its fiber at  $\be=(e_1:e_2)\in\mathbb P^1_{e_1,e_2}$ corresponds to the quadric hypersurface $Q_{\be}$ in $\mathbb P^4$ defined by  $e_2Q_1-e_1Q_2=0$.

For each  $i=1,\ldots,5$, set  $\bold a_i=(1:\theta_i)\in \mathbb P^1_{e_1,e_2}$. Then  $Q_{\be}$ is singular exactly only when $\be=\bold a_i$ for some $i$.

 \begin{lemma}[\cite{Sk}]\label{l.del Pezzo blow up}
 $X$ is isomorphic to the blow-up of $\mathbb P^2$ at the images $p_i\in\mathbb P^2$ of $\bold a_i\in \PP^1_{e_1,e_2}$  under the Veronese embedding $\mathbb P^1\hookrightarrow\mathbb P^2$, and is isomorphic to the subscheme of  $\mathbb P^4_{y_1,\ldots,y_5}$  defined by
\begin{equation}\label{eq.quadrics2}\sum_{i=1}^5P'(\theta_i)^{-1}y_i^2=\sum_{i=1}^5P'(\theta_i)^{-1}\theta_iy_i^2=0.\end{equation}
\end{lemma}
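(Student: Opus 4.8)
The plan is to prove the two assertions separately, deriving the explicit equations~(\ref{eq.quadrics2}) first and then using them to build the realization of $X$ as a blow-up. Since the characteristic polynomial $P(t)=\det(tQ_1-Q_2)$ has the five distinct roots $\theta_1,\dots,\theta_5$, the pencil $\{tQ_1-Q_2\}$ is regular, so $Q_1^{-1}Q_2$ is diagonalizable with distinct eigenvalues and the two forms can be simultaneously diagonalized; using $\det Q_1=1$ to normalize $Q_1$ to the identity, I would choose coordinates $y_1,\dots,y_5$ on $\PP^4$ with
\[
Q_1=\sum_{i=1}^5 y_i^2,\qquad Q_2=\sum_{i=1}^5\theta_i y_i^2 .
\]
Applying the diagonal projective automorphism $y_i\mapsto P'(\theta_i)^{-1/2}y_i$ (legitimate over $\CC$, as $P'(\theta_i)=\prod_{j\ne i}(\theta_i-\theta_j)\ne 0$) carries this pair to the forms in~(\ref{eq.quadrics2}), so $X$ is projectively, hence abstractly, isomorphic to the subscheme they define. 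The only arithmetic input I will need for the first assertion is the partial-fraction (residue) identity
\[
\sum_{i=1}^5 \frac{\theta_i^{\,k}}{P'(\theta_i)}=
\begin{cases}0,& 0\le k\le 3,\\ 1,& k=4,\end{cases}
\]
obtained by summing the residues of $t^k/P(t)$ together with the residue at infinity.

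For the blow-up assertion I would set $p_i=(1:\theta_i:\theta_i^2)$, the images of $\bold a_i=(1:\theta_i)$ under the Veronese embedding $v\colon\PP^1\hookrightarrow\PP^2$, so that the $p_i$ lie on the conic $\Gamma=v(\PP^1)=\{x_1^2=x_0x_2\}$. Because a line meets $\Gamma$ in at most two points and the $\theta_i$ are distinct, no three of the $p_i$ are collinear; hence $Y:={\rm Bl}_{p_1,\dots,p_5}\PP^2$ is a del~Pezzo surface of degree $4$, and its anticanonical system $|{-K_Y}|=|3H-\sum E_i|$ — the $5$-dimensional space of plane cubics through $p_1,\dots,p_5$ — is very ample and embeds $Y$ as a nondegenerate degree-$4$ surface in $\PP^4$. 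I would then pick a basis $y_1,\dots,y_5$ of this space of cubics adapted to the $\theta_i$ (a Lagrange-type basis built from the factorization of $P$ and the conic $\Gamma$) and verify, using the residue identity above, the two sextic identities
\[
\sum_{i=1}^5 P'(\theta_i)^{-1}y_i^2\equiv 0,\qquad \sum_{i=1}^5 P'(\theta_i)^{-1}\theta_i\,y_i^2\equiv 0
\]
on $\PP^2$. This places the anticanonical image of $Y$ inside the complete intersection~(\ref{eq.quadrics2}). Since that image is an irreducible degree-$4$ surface contained in the irreducible degree-$4$ surface cut out by~(\ref{eq.quadrics2}), the two coincide; combined with the first paragraph this identifies $Y$ with $X$ and exhibits $X$ as the blow-up of $\PP^2$ at the Veronese points $p_i$.

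The routine steps are the simultaneous diagonalization, the very ampleness of $-K_Y$ (standard for del~Pezzo surfaces once the five points are in general position), and the final degree–irreducibility comparison that upgrades containment to equality. The real obstacle is the middle step: producing the correct $\theta_i$-adapted basis of cubics and checking that the two quadratic relations vanish identically with precisely the coefficients $P'(\theta_i)^{-1}$. Restricting each $y_i$ to $\Gamma\cong\PP^1$ turns it into a degree-$6$ form divisible by $P$, and on $\Gamma$ the vanishing of $\sum P'(\theta_i)^{-1}y_i^2$ and of its $\theta_i$-weighted analogue reduces to the residue identities for $k\le 4$; the nontrivial point is that these relations persist off $\Gamma$, which is exactly what forces the special coefficients and constitutes the computational content carried out in \cite{Sk}. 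Equivalently, the obstacle can be phrased intrinsically: one must show that the discriminant of the pencil of quadrics attached to $Y$ consists of exactly the points $\bold a_1,\dots,\bold a_5$, i.e.\ that the assignment sending five points of $\PP^1$ to the del~Pezzo surface obtained by blowing up their Veronese images is inverse to the assignment sending a del~Pezzo surface of degree $4$ to the five parameters of its singular members.
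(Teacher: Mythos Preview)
The paper does not supply a proof of this lemma; it is stated with attribution to \cite{Sk} and used as a black box, so there is nothing in the paper to compare your proposal against.

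As a sketch of the cited result your outline is sound. The second assertion (the explicit equations) is indeed an immediate rescaling of the simultaneously diagonalized form~(\ref{eq.quadrics}) already recorded in the introduction, exactly as you describe. For the first assertion your strategy---blow up $\PP^2$ at the five Veronese points, observe they are in general position because a line meets the Veronese conic in at most two points, anticanonically embed the resulting del~Pezzo surface $Y$ in $\PP^4$, and then match it with the complete intersection~(\ref{eq.quadrics2}) by a degree/irreducibility argument---is the standard route and is essentially what Skorobogatov does. You have correctly located the only substantive step: exhibiting a $\theta_i$-adapted basis of cubics through the $p_i$ for which the two quadratic relations hold identically on $\PP^2$, and your residue heuristic on the conic $\Gamma$ is a good explanation of why the coefficients $P'(\theta_i)^{-1}$ are forced. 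There is no gap in the strategy, only the explicit calculation you have deliberately deferred to \cite{Sk}.
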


\medskip

\subsection{Surfaces in the linear system $|2\zeta|$ in $\PP(T_X)$}
 
Let us consider $X$ as the blow-up of $\mathbb P^2$ at $p_1,\ldots,p_5\in \mathbb P^2$ in Lemma \ref{l.del Pezzo blow up} and denote by $\mu:X\rightarrow \mathbb P^2$ the blow-up morphism.

\subsubsection{Description of lines in $X$} Let $E_i$ be  the exceptional curve on $X$ over $p_i$ and  $C$ the proper transform  of the unique   conic in $\mathbb P^2$ through all $p_1,\ldots,p_5$.  For each $1\leq i\neq j\leq 5$, let $\ell_{i,j}\subset X$ be the proper transform of the line in $\mathbb P^2$ connecting $p_i$ and $p_j$.    Then $C$, $\{E_i\}_i$ and $\{\ell_{i,j}\}_{i,j}$ are exactly the 16 lines $\ell_1,\ldots,\ell_{16}$  in $X$  in the introduction.  We  denote by ${C'}$, ${E_i'}$ and ${\ell_{i,j}'}\subset \mathbb P(T_X)$ the  sections  of the respective lines associated  quotients of the form: $T_X|_{\mathbb P^1}=\mathcal O_{\mathbb P^1}(2)\oplus \mathcal O_{\mathbb P^1}(-1)\twoheadrightarrow \mathcal O_{\mathbb P^1}(-1)$. 

\subsubsection{ Linear system $|2\zeta|$}As seen in the introduction, the pencil $\{Q_{\bold e}\}_{\bold e\in\mathbb P^1_{e_1,e_2}}$ of quadric hypersurfaces induced by  $Q_1$ and $Q_2$  gives the linear system $|2\zeta|$ in $\PP(T_X)$. Let $\ell_i'$ be 16 sections of $\mathbb P(T_X|_{\ell_i})\rightarrow \ell_i$ which are associated to  quotients 
$T_X|_{\ell_i}=\mathcal O_{\mathbb P^1}(2)\oplus \mathcal O_{\mathbb P^1}(-1)\twoheadrightarrow \mathcal O_{\mathbb P^1}(-1)$.   Let  $B$ be the base locus  of the linear system $|2\zeta|$ in $\PP(T_X)$. Using $\zeta\cdot \ell_i'=-1$ and the description of Section 2 in \cite{OL19} we can show that 
$B$ is supported on  the disjoint union of 16 sections $\ell_i'$ so that   $B=\sum_{i=1}^{16}{a_i\ell_i'}$ for some integers $a_i\geq 1$. 
Using the Grothendieck relation 
$$\zeta^2+\pi^*K_X\cdot \zeta++\pi^*c_2(T_X)=0$$
we can  calculate  $\zeta^3=-4$. Therefore
$$(2\zeta)^2\cdot \zeta=\sum_{i=1}^{16}a_i\ell_i'\cdot \zeta=-\sum_{i=1}^{16}a_i=-16$$
which implies that $a_i=1$ for all $i$.

Let $$\mu_B:{\rm Bl}_B\mathbb P(T_X)\rightarrow\mathbb P(T_X)$$ be the bolow-up along  $B$. 
We have  a smooth member $K_g$ of $|2\zeta|$ (see Corollary 2.4 in \cite{DH}).
The exact sequence on normal bundles 
$$0\rightarrow N_{\ell_i'/K_g}=\mathcal O_{\ell_i'}(-2)\rightarrow N_{\ell_i'/\mathbb P(T_X)}\rightarrow N_{K_g/\mathbb P(T_X)}|_{\ell_i'}=\mathcal O_{\ell_i'}(-2)\rightarrow 0$$ 
shows that 
$N_{\ell_i'/\mathbb P(T_X)}\cong\mathcal O_{\mathbb P^1}(-2)\oplus\mathcal O_{\mathbb P^1}(-2) $  because ${\rm Ext}^{1}(\mathcal O_{\mathbb P^1}(-2),\mathcal O_{\mathbb P^1}(-2))=0$. 
Thus the exceptional divisor over $\ell_i'$ of the blow-up $\mu_B$  is isomorphic to $\mathbb P(\mathcal O_{\mathbb P^1}(-2)\oplus \mathcal O_{\mathbb P^1}(-2))\cong\mathbb P^1\times \mathbb P^1$. This implies that, after blow-up, the rational map $\tilde\phi:\mathbb P(T_X)\dashrightarrow \mathbb P^1_{e_1,e_2}$ induced by the morphism $\Phi:T_X^*\rightarrow \mathbb C^2_{e_1,e_2}$ defined by the pair $(Q_1,Q_2)$  can extended to a 
 morphism $$\phi:{\rm Bl}_B\mathbb P(T_X)\rightarrow\mathbb P^1_{e_1,e_2}=\mathbb P^1$$ which is a family of members of $|2\zeta|$. 
We often consider each fiber $K_{\bold e}=\phi^{-1}(\bold e)$  as  a subscheme of $\mathbb P(T_X)$. 

For each ${\bold e}\in \mathbb P^1_{e_1,e_2}$, we let $$\pi_{\be}:K_{\be}\rightarrow X$$ be the restriction of the composition  $\pi\circ\mu_B:{\rm Bl}_B\mathbb P(T_X)\rightarrow\mathbb P(T_X)\rightarrow X$. 
\begin{lemma}\label{l.double cover}
For each point $x$ in $X$, $\pi_{\be}^{-1}(x)$ consists of two points with multiplicity except only when $Q_{\be}$ is singular and $x$ is one of the intersection points of some two lines in $X$. In this exceptional case, $\pi_{\be}^{-1}(x)$ is isomorphic to   $\mathbb P^1$.  
\end{lemma}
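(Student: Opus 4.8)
The plan is to work fiberwise over $X$. The member $K_{\be}$ is cut out by the image of $Q_{\be}$ under the isomorphism $H^0(X,\Sym^2T_X)\cong H^0(\PP(T_X),2\zeta)$, so over a point $x\in X$ the scheme $\pi_{\be}^{-1}(x)$ is the zero locus in $\PP(T_xX)\cong\PP^1$ of the binary quadratic form $q_{\be,x}$ obtained by restricting $Q_{\be}$ to $T_xX$. A nonzero binary quadratic form cuts out a length-two subscheme of $\PP^1$ (two distinct points, or one double point), whereas the zero form cuts out all of $\PP^1$; hence $\pi_{\be}^{-1}(x)$ consists of two points with multiplicity unless $q_{\be,x}\equiv 0$, in which case it is $\PP^1$. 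Viewing ${\bold T}_xX$ affinely with origin $x$, the form $q_{\be,x}$ is exactly the defining equation of the conic $Q_{\be}\cap{\bold T}_xX$ (the linear term vanishes because ${\bold T}_xX\subset T_xQ_{\be}$), so this matches the description of Section 2 of \cite{OL19}: the points over $x$ are the lines through $x$ on $Q_{\be}\cap{\bold T}_xX$, and the exceptional $\PP^1$ occurs precisely when this conic degenerates to the whole tangent plane, i.e. when ${\bold T}_xX\subset Q_{\be}$. It therefore remains to identify the locus where ${\bold T}_xX\subset Q_{\be}$.

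First I would show ${\bold T}_xX\subset Q_{\be}$ forces $Q_{\be}$ to be singular: a nondegenerate quadratic form on $\CC^5$ has isotropic subspaces of dimension at most two, so a smooth quadric threefold in $\PP^4$ contains lines but no $2$-planes, while ${\bold T}_xX$ is a $2$-plane; hence $\be={\bold a}_i$ for some $i$. Next, assuming ${\bold T}_xX\subset Q_{{\bold a}_i}$, I would analyze ${\bold T}_xX\cap X$. Since the pencil member $e_2Q_1-e_1Q_2$ vanishes on the plane ${\bold T}_xX$, the restrictions $Q_1|_{{\bold T}_xX}$ and $Q_2|_{{\bold T}_xX}$ are proportional, so ${\bold T}_xX\cap X={\bold T}_xX\cap Q_1$ is a plane conic, necessarily singular at $x$ by tangency. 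Geometrically, the rank-four quadric $Q_{{\bold a}_i}$ is a cone with vertex $v_i$, and its $2$-planes form two rulings; projection from $v_i$ exhibits $X$ as a conic bundle whose fibers are the sections $X\cap\Pi$ with $\Pi$ a ruling plane. Under our assumption, ${\bold T}_xX$ is one such ruling plane, so ${\bold T}_xX\cap X$ is the fiber of this conic bundle through $x$. Two distinct planes in a ruling meet only at $v_i$, and $v_i\notin X$ (in the normal form $Q_1=\sum y_j^2$ the vertex lies off $Q_1\supset X$), so these conics are mutually disjoint; thus any such fiber $F$ satisfies $F^2=0$.

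The hard part is to rule out that the singular conic $F={\bold T}_xX\cap X$ is a double line, which is exactly what is needed to conclude that $x$ is genuinely an intersection of two lines. I would argue numerically: $-K_X\cdot F=2$ since $F$ is a conic in the anticanonical embedding, while $F^2=0$. If $F=2\ell$ were a double line, then $\ell$ would be a $(-1)$-curve (one of the $16$ lines, with $-K_X\cdot\ell=1$), yet $0=F^2=4\ell^2$ would force $\ell^2=0$, contradicting $\ell^2=-1$. Hence every such fiber is reduced, a union $\ell\cup\ell'$ of two distinct lines meeting at the node $x$. Therefore ${\bold T}_xX\subset Q_{{\bold a}_i}$ implies that $x$ is an intersection point of two of the $16$ lines.

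For the converse I would start from an intersection point $x=\ell\cap\ell'$. The two lines through $x$ span the embedded tangent plane, ${\bold T}_xX=\langle\ell,\ell'\rangle$, and each of $Q_1,Q_2$ restricts to a conic on this plane containing $\ell\cup\ell'$; since $Q_1$ and $Q_2$ are smooth they cannot contain a $2$-plane, so both restrictions are nonzero scalar multiples of the product of the two defining linear forms. Consequently a unique member $Q_{{\bold a}_i}$ of the pencil restricts to zero on ${\bold T}_xX$, giving ${\bold T}_xX\subset Q_{{\bold a}_i}$ and hence $\pi_{{\bold a}_i}^{-1}(x)\cong\PP^1$ by the first paragraph. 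Combining the two directions, $\pi_{\be}^{-1}(x)$ is $\PP^1$ precisely when $Q_{\be}$ is singular and $x$ is an intersection point of two lines in $X$, and is a length-two scheme otherwise, which is the assertion of the lemma.
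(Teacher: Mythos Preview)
Your proof follows the same skeleton as the paper's: reduce to the dichotomy that $Q_{\be}\cap{\bf T}_xX$ is either a length-two scheme of lines through $x$ or all of ${\bf T}_xX$, then characterize when ${\bf T}_xX\subset Q_{\be}$. Where you go further is in the forward direction: the paper simply asserts that ${\bf T}_xX\cap X={\bf T}_xX\cap Q$ ``is a union of some two lines'' without excluding the possibility that this singular plane conic is a double line, whereas you correctly isolate this as the delicate point and dispatch it with the numerical argument $F^2=0$ versus $\ell^2=-1$ for a $(-1)$-curve. Your converse (proportionality of $Q_1|_{{\bf T}_xX}$ and $Q_2|_{{\bf T}_xX}$, hence a unique pencil member vanishing on the plane) is also cleaner than the paper's somewhat cryptic invocation of ``some other line in the ruling of the cone.'' One small caveat: what your converse actually establishes is that each intersection point $x$ singles out a \emph{unique} singular $Q_{{\bold a}_i}$ with ${\bf T}_xX\subset Q_{{\bold a}_i}$, not that every singular $Q_{\be}$ works for every intersection point; your concluding ``precisely when'' overstates this slightly, but so does the paper's own phrasing, and only the forward implication is used downstream.
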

 \begin{proof}Let $Q$ be a smooth quadric hypersurface in $\mathbb P^4$ such that $X=Q_{\be}\cap Q$.  The description of Section 2 in \cite{OL19} says that 
 each fiber $\pi_{\be}^{-1}(x)$ parametrizes lines in $Q_\be\cap {\bold T}_x X$ through $x$, where ${\bold T}_x X\subset \PP^4$ denotes the embedded projective tangent plane to $X$ at $x$. So  we only need to show the next claim. 
 
 \medskip
 Claim: For a point $x$ in $ X$, ${\bf T}_xX\cap Q_{\be}={\bf T}_xX$ if and only if $Q_{\be}$ is singular and $x$ is the intersection point of some two lines in $X$. 

\medskip
Assume that ${\bf T}_xX\cap Q_{\be}={\bf T}_xX$. Then ${\bf T}_xX\subset Q_{\be}$. Since any smooth quadric hypersurface in $\mathbb P^4$  contains no plane in $\mathbb P^4$,   $Q_{\be}$ is singular so that it is a cone over a quadric surface  in $\mathbb P^3$. We also have equalities ${\bf T}_xX\cap X= {\bf T}_xX\cap Q_{\be}\cap Q={\bf T}_xX\cap Q$ as a set, which implies  that ${\bf T}_xX\cap X$ is a union of some two lines in $X$ and $x$ is the intersection point of them.

Conversely, assume that $Q_{\be}$ is singular and $x\in X$ is the intersection point of some  two lines $\ell_{\iota_1}$ and $\ell_{\iota_2}$ in $X$.  Then the intersection $Q_{\be}\cap {\bf T}_xX$ contains $\ell_{\iota_1}$, $\ell_{\iota_2}$ and some other line in the ruling of the cone structure on $Q_{\be}$ which implies that  $Q_{\be}\cap {\bf T}_xX={\bf T}_xX$.
 \end{proof}

\subsubsection{Conic fibration on $X$} Let  ${\rm RatCurves}^n(X)$ be the normalized space of rational curves on $X$ (see \cite{Kol96}). 
For each $i=1,\ldots,5$, let $\mathcal K_{i,1}$ be the irreducible component of  ${\rm RatCurves}^n(X)$  containing the proper transform of a general line in $\mathbb P^2$  through $p_i$, and let $\mathcal K_{i,2}$ be that containing the proper transform of a general conic in $\PP^2$  through $\{p_1,\ldots,p_5\}\setminus \{p_i\}$.  
 There is a conic fibration  $$\pi_{i,j}:X\rightarrow \mathbb P^1$$ whose general fiber is a member of $\mathcal K_{i,j}$. 
The conic fibration $\pi_{i,j}:X\rightarrow  \PP^1$ has four singular fibers.  For each $k\in\{1,\ldots,5\}\setminus \{i\}$, there is a  singular fiber of $\pi_{i,1}$ which  is the union of $\ell_{k,i}$ and $E_k$. Three of the four singular fibers of $\pi_{i,2}$ are the unions of two lines  of the forms $\ell_{\iota_1,\iota_2}$ and $\ell_{\iota_3,\iota_4}$ with $\{{\iota_1},{\iota_2},{\iota_3},{\iota_4}\}=\{1,\ldots,5\}\setminus \{i\}$, and the last one is the union of $C$ and  $E_i$. We note that the union of singular fibers of $\pi_{i,1}$ and $\pi_{i,2}$ is exactly the union of 16 lines $\ell_1,\ldots,\ell_{16}$  in $X$.

\subsubsection{Fibration on Total dual VMRT }  
 Let $\breve{\mathcal C}_{i,j}$ be  the total dual VMRT  associated to 
$\mathcal K_{i,j}$. 
We refer to the paper \cite{HLS} for the total dual VMRT. 
 Let $L$ be the class of $\mu^*\mathcal O_{\mathbb P^2}(1)$.
By Corollary 2.13 in \cite{HLS}, we have 
 \begin{align*}[\breve{\mathcal C}_{i,1}]&=\zeta-\pi^*L+\pi^*[E_1]+\cdots+\pi^*[E_5]-2\pi^*[E_i], \mbox{ and} \\ 
[\breve{\mathcal C}_{i,2}]&=\zeta+\pi^*L-\pi^*[E_1]-\cdots-\pi^*[E_5]+2\pi^*[E_i]\end{align*}
and thus 
\begin{equation}\label{eq.reducible surface}[\breve{\mathcal C}_{i,1}]+[\breve{\mathcal C}_{i,2}]=2\zeta. \end{equation} 
 This shows that there are 5 points  $\bold b_1,\ldots,\bold b_5$  in $\mathbb P^1_{e_1,e_2}$ such that $K_{\bold b_i}=\breve{\mathcal C}_{i,1}\cup \breve{\mathcal C}_{i,2}$. 
\begin{lemma}\label{l.reducible and singular}
 The  5 points $\bold b_i\in \mathbb P^1_{e_1,e_2}$ are the same as the 5 points $\bold a_i\in\mathbb P^1_{e_1,e_2}$ after reordering. 
\end{lemma}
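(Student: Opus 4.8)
The plan is to reduce the statement to a counting argument. The five points $\bold a_i=(1:\theta_i)$ are distinct because the roots $\theta_i$ of $P(t)$ are distinct, and there are five points $\bold b_i$ by the decomposition (\ref{eq.reducible surface}). Hence it suffices to prove two things: that a reducible fibre $K_{\bold e}$ can occur only over a point where $Q_{\bold e}$ is singular, giving the inclusion $\{\bold b_1,\dots,\bold b_5\}\subseteq\{\bold a_1,\dots,\bold a_5\}$; and that the five points $\bold b_i$ are pairwise distinct. An inclusion of a five-element set into a five-element set is then forced to be an equality, which is exactly the assertion.

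First I would establish that $K_{\bold e}$ is irreducible whenever $Q_{\bold e}$ is smooth, i.e. whenever $\bold e\notin\{\bold a_1,\dots,\bold a_5\}$. By Lemma~\ref{l.double cover}, for such $\bold e$ every fibre of $\pi_{\bold e}:K_{\bold e}\to X$ consists of exactly two points counted with multiplicity, so $\pi_{\bold e}$ is a finite morphism of degree two onto the smooth irreducible surface $X$. Were $K_{\bold e}$ reducible, say $K_{\bold e}=A_1\cup A_2$, then no component could be $\pi$-vertical, since a $\pi$-vertical surface would contain entire $\mathbb P^1$-fibres and contradict the finiteness of $\pi_{\bold e}$; thus both $A_1\to X$ and $A_2\to X$ are surjective, and as their fibre degrees $\zeta\cdot F$ over a general fibre $F$ of $\pi$ add up to $2$, each equals $1$ and both maps are birational. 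Equivalently, the two lines of $Q_{\bold e}\cap{\bf T}_xX$ through a general $x$ would be consistently labelled over all of $X$ according to the component containing them. By the description of $\pi_{\bold e}^{-1}(x)$ in Section~2 of \cite{OL19}, such a global labelling is precisely the datum supplied by the two rulings on the base of a quadric cone, and it does not exist for a smooth quadric $Q_{\bold e}$, where the two lines are interchanged by monodromy. Hence a reducible $K_{\bold e}$ forces $Q_{\bold e}$ to be singular, giving $\{\bold b_i\}\subseteq\{\bold a_i\}$.

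It remains to check that $\bold b_1,\dots,\bold b_5$ are pairwise distinct. Setting $M_i:=-L+\sum_{k=1}^5[E_k]-2[E_i]$ on $X$, the classes computed before (\ref{eq.reducible surface}) read $[\breve{\mathcal C}_{i,1}]=\zeta+\pi^*M_i$ and $[\breve{\mathcal C}_{i,2}]=\zeta-\pi^*M_i$. Using the standard intersection form on the del Pezzo surface ($L^2=1$, $E_k^2=-1$, $L\cdot E_k=0$, $E_k\cdot E_l=0$ for $k\neq l$) one finds $M_i-M_j=2([E_j]-[E_i])$ and $M_i+M_j=2(-L+\sum_{k\neq i,j}[E_k])$, both nonzero for $i\neq j$; thus $M_i\neq\pm M_j$, and the unordered pairs $\{\zeta+\pi^*M_i,\ \zeta-\pi^*M_i\}$ of component classes are distinct for distinct $i$. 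A reducible member of the pencil $|2\zeta|$ is determined by its point of $\mathbb P^1_{e_1,e_2}$, and distinct $i$ yield members with distinct sets of component classes, so the $K_{\bold b_i}$, and hence the $\bold b_i$, are pairwise distinct.

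Combining the two steps, $\{\bold b_1,\dots,\bold b_5\}$ is a set of five distinct points inside the five-element set $\{\bold a_1,\dots,\bold a_5\}$, so the two sets coincide and $\bold b_i=\bold a_i$ after reordering. I expect the main obstacle to be the irreducibility statement of the second paragraph: the finiteness of $\pi_{\bold e}$ coming from Lemma~\ref{l.double cover} does not by itself forbid a splitting into two birational sheets, and the real point is to rule this out for smooth $Q_{\bold e}$. I would do this either through the transitivity of the monodromy of the two lines on a smooth quadric threefold or, equivalently, by checking that the discriminant section in $H^0(X,-2K_X)$ cutting out the branch locus of $\pi_{\bold e}$ fails to be a perfect square unless $Q_{\bold e}$ degenerates to a cone.
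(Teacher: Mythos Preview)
Your overall plan---reduce to showing $\{\bold b_i\}\subseteq\{\bold a_i\}$ and then count---is exactly the paper's strategy, and your verification that the $\bold b_i$ are pairwise distinct via the component classes $\zeta\pm\pi^*M_i$ is a useful addition the paper leaves implicit.

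Where you diverge is in the key step, and there the gap you yourself flag is real. Your claim that ``the two lines are interchanged by monodromy'' for a smooth $Q_{\bold e}$ is precisely the statement that $K_{\bold e}$ is connected, so invoking it is circular; and the monodromy of lines on a smooth quadric threefold does not by itself control the monodromy of the two lines in $Q_{\bold e}\cap{\bf T}_xX$, since the latter depends on the subvariety $X$, not just on $Q_{\bold e}$. Your fallback (discriminant not a square) would work in principle but requires an actual computation you have not carried out.

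The paper sidesteps this entirely with a short intersection-theoretic contradiction. If $Q_{\bold b_i}$ were smooth, Lemma~\ref{l.double cover} forces every fibre of $\pi_{\bold b_i}$ to be two reduced points, so each component $\breve{\mathcal C}_{i,j}$ is finite of degree one over $X$, hence isomorphic to $X$. The base locus $B=\bigcup_k\ell_k'$ sits in every member of $|2\zeta|$, so some $\ell_k'$ lies in, say, $\breve{\mathcal C}_{i,1}$. Now compute $\zeta|_{\breve{\mathcal C}_{i,1}}$ on $\breve{\mathcal C}_{i,1}\cong X$: from $K_{\PP(T_X)}=-2\zeta$ and $[\breve{\mathcal C}_{i,1}]=\zeta+\pi^*M_i$ one gets by adjunction $\zeta|_{\breve{\mathcal C}_{i,1}}=M_i-K_X=2(L-E_i)$, twice the fibre class of $\pi_{i,1}$. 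In particular every intersection number $\zeta|_{\breve{\mathcal C}_{i,1}}\cdot(\,\cdot\,)$ is even, yet $\ell_k'\cdot\zeta=-1$. This is the contradiction. The paper phrases it as ``$[F]=2[\ell_k']$'', but the content is this parity obstruction.

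So: keep your counting frame and the distinctness check, but replace the monodromy paragraph with the base-locus/parity argument above. It is shorter, self-contained, and uses only data already computed in the paper.
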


\begin{proof}
We only need to show that each $Q_{\bold b_i}$ is singular. Suppose not. Then by Lemma~\ref{l.double cover}, $\breve{\mathcal C}_{i,1}$ and $\breve{\mathcal C}_{i,2}$ are isomorphic to $X$. Clearly at least one of $\breve{\mathcal C}_{i,1}$ and $\breve{\mathcal C}_{i,2}$ contains some $\ell_k'$.  

Assume that $\breve{\mathcal C}_{i,1}$ contains some $\ell_k'$. Let $F$ be a general fiber of $\pi_{i,1}:X\rightarrow \mathbb P^1$. Then $F\cdot\zeta|_{{\breve{\mathcal C}}_{i,1}}=0$ and  $\ell_k'\cdot\zeta|_{\breve{\mathcal C}_{i,1}}=-1$. Since $F$ is a conic and $\ell_k$ is a line in $X$, we have  $[F]=2[\ell_k']$ in $\breve{\mathcal C}_{i,1}$,  a contradiction.  
\end{proof}

We have a fibration  $\breve\pi_{i,j}:\breve{\mathcal C}_{i,j}\rightarrow \mathbb P^1$  of curves on $\breve{\mathcal C}_{i,j}$   given by the composition   
 $$\breve\pi_{i,j}:=\pi_{i,j}\circ \pi|_{\breve{\mathcal C}_{i,j}}:\breve{\mathcal C}_{i,j}\rightarrow X\rightarrow\mathbb P^1.$$  

\begin{lemma}\label{l.VMRT fibration}
The restriction $\pi|_{\breve{\mathcal C}_{i,j}}:\breve{\mathcal C}_{i,j}\rightarrow X$ of $\pi: \mathbb P(T_X)\rightarrow X$ is  the blow-up of four points of $X$  which are the singular points of the singular fibers of the conic fibration $\pi_{i,j}:X\rightarrow\mathbb P^1 $.  Moreover the proper transform in $\breve{\mathcal C_{i,j}}$ of a line  $\ell_k$  in a singular fiber of $\pi_{i,j}$ is equal to $\ell'_k$.
\end{lemma}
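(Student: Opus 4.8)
The plan is to realise $\pi|_{\breve{\mathcal C}_{i,j}}$ as the blow-down of the closure of the graph of a rational section of $\pi$ determined by the conic fibration $\pi_{i,j}$, and then to resolve its indeterminacy by an explicit local computation at the nodes of the singular fibres. First I would note that $\pi|_{\breve{\mathcal C}_{i,j}}$ is birational onto $X$: intersecting the class of $\breve{\mathcal C}_{i,j}$ with a fibre $f$ of $\pi$ gives $[\breve{\mathcal C}_{i,j}]\cdot f=\zeta\cdot f=1$, since every $\pi^{*}$-term meets $f$ trivially. Recall that, by the description of Section~2 in \cite{OL19}, the fibre $\pi^{-1}(x)$ parametrises the tangent directions at $x$. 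Over the open locus $X^{\circ}\subset X$ where the fibre of $\pi_{i,j}$ through a point is smooth at that point, each $x$ lies on a unique smooth conic of $\mathcal K_{i,j}$, and the total dual VMRT construction of \cite{HLS} assigns to $x$ the tangent direction $s(x)\in\pi^{-1}(x)$ of this conic. This defines a morphic section $s:X^{\circ}\to\mathbb P(T_X)$ with $\breve{\mathcal C}_{i,j}=\overline{s(X^{\circ})}$, whose indeterminacy locus is exactly the set of points at which the fibre of $\pi_{i,j}$ is singular, i.e. the four nodes $x_1,\dots,x_4$ of the four singular fibres.

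The heart of the argument is local at each $x_m$. Since every singular fibre is a transverse union of two lines (for instance $\ell_{k,i}\cup E_k$ for $\pi_{i,1}$), I can choose analytic coordinates $(u,v)$ on $X$ near $x_m$ so that $\pi_{i,j}$ is $(u,v)\mapsto uv$ with node $u=v=0$. The fibres are the level sets $\{uv=\mathrm{const}\}$, whose tangent line is $\ker(v\,du+u\,dv)$, so in the fibre coordinates $[\partial_u:\partial_v]$ the section reads $s(u,v)=[u:-v]$. This is the standard rational map undefined only at the origin and resolved by a single blow-up, the exceptional curve mapping isomorphically onto $\pi^{-1}(x_m)\cong\mathbb P^1$. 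Hence $\overline{s(X^{\circ})}\cong\mathrm{Bl}_{x_1,\dots,x_4}X$ and $\pi|_{\breve{\mathcal C}_{i,j}}$ is precisely the blow-up at $x_1,\dots,x_4$. As a numerical check, adjunction from $K_{\mathbb P(T_X)}=-2\zeta$ together with the class of $\breve{\mathcal C}_{i,j}$ yields $K_{\breve{\mathcal C}_{i,j}}^{2}=0=K_X^{2}-4$, confirming that exactly four points are blown up.

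For the \emph{moreover} statement, let $\ell_k$ be a line contained in a singular fibre of $\pi_{i,j}$ and let $x\in\ell_k$ be any point other than the node. Near $x$ the fibre of $\pi_{i,j}$ coincides with $\ell_k$, so $s(x)$ is the tangent direction $T_x\ell_k$, which is exactly the point $\ell_k'|_x$: indeed $\ell_k'$ is the section given by the quotient $T_X|_{\ell_k}=\mathcal O_{\mathbb P^1}(2)\oplus\mathcal O_{\mathbb P^1}(-1)\twoheadrightarrow\mathcal O_{\mathbb P^1}(-1)$ whose kernel is $T_{\ell_k}=\mathcal O_{\mathbb P^1}(2)$. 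Since $\pi|_{\breve{\mathcal C}_{i,j}}$ is an isomorphism over $\ell_k$ away from the node, the proper transform of $\ell_k$ in $\breve{\mathcal C}_{i,j}$ is $\overline{s(\ell_k\setminus\{\text{node}\})}=\ell_k'$.

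I expect the main obstacle to be the rigorous identification of the total dual VMRT of \cite{HLS} with $\overline{s(X^{\circ})}$ --- in particular, checking that the fibre of $\breve{\mathcal C}_{i,j}$ over each node is the \emph{full} $\mathbb P^1$ and that no further points are blown up. Here Lemma~\ref{l.double cover} and the irreducibility of $\breve{\mathcal C}_{i,j}$ (as a component of $K_{\mathbf b_i}$) are decisive: the irreducible fibre $\pi_{\mathbf b_i}^{-1}(x_m)\cong\mathbb P^1$ can neither be split between the two components of $K_{\mathbf b_i}$ nor lie in their elliptic intersection curve, so it lies entirely in the component whose conic fibration degenerates at $x_m$, which is $\breve{\mathcal C}_{i,j}$. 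Combined with the fact that each intersection point of two lines is the node of a singular fibre of a unique $\pi_{i,j}$, this pins down exactly four jumping points and lets the local model complete the proof.
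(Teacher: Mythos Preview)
Your argument is correct, but it follows a different route from the paper's.

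The paper does not use local coordinates at all. For the first assertion it works globally inside $K_{\mathbf b_i}=\breve{\mathcal C}_{i,1}\cup\breve{\mathcal C}_{i,2}$: by Lemmas~\ref{l.double cover} and~\ref{l.reducible and singular} the fibre $\pi_{\mathbf b_i}^{-1}(x)$ is $\cong\mathbb P^1$ exactly at the nodes $x$ of singular fibres, and since $\pi|_{\breve{\mathcal C}_{i,j'}}$ is an isomorphism away from the singular fibres of $\pi_{i,j'}$, that $\mathbb P^1$ must lie entirely in the component $\breve{\mathcal C}_{i,j}$ whose fibration degenerates at $x$. For the \emph{moreover} part the paper argues purely by intersection numbers: the singular fibre of $\breve\pi_{i,j}$ decomposes as $\hat\ell_{\iota_1}+\hat\ell_{\iota_2}+2\ell$ with $\ell$ a $\pi$-fibre; from $(\text{fibre})\cdot\zeta|_{\breve{\mathcal C}_{i,j}}=0$ and $\ell\cdot\zeta|_{\breve{\mathcal C}_{i,j}}=1$ one reads off $\hat\ell_{\iota_k}\cdot\zeta|_{\breve{\mathcal C}_{i,j}}=-1$, which forces $\hat\ell_{\iota_k}=\ell'_{\iota_k}$.

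Your approach --- realising $\breve{\mathcal C}_{i,j}$ as the closure of the section $s$ and resolving $s(u,v)=[u:-v]$ by one blow-up --- is more elementary and self-contained: it handles both assertions at once and makes the geometry of the exceptional $\mathbb P^1$ explicit. In fact your local model already shows that the closure over each node is the full $\mathbb P^1$, so your final paragraph invoking Lemma~\ref{l.double cover} is redundant rather than an obstacle. The paper's method, by contrast, stays within the global intersection calculus it has already built up around $\zeta$ and $K_{\mathbf b_i}$, at the cost of relying on Lemma~\ref{l.double cover}. Either way works; the paper's numerical identification $\hat\ell_{\iota_k}\cdot\zeta=-1\Rightarrow\hat\ell_{\iota_k}=\ell'_{\iota_k}$ is worth noting as a clean alternative to your tangent-direction argument.
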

\begin{proof}
Let us  assume that  a singular fiber of $\pi_{i,1}:X\rightarrow \mathbb P^1$ consists of two lines $\ell_{\iota_1}$ and $\ell_{\iota_2}$ meeting at $x$. Then by Lemmas \ref{l.double cover} and \ref{l.reducible and singular} the preimge $\pi_{\bold b_i}^{-1}(x)$  of $\pi_{\bold b_i}:K_{\bold b_i}\rightarrow X$ is isomorphic to $\mathbb P^1$. Since $\pi_{\bold b_i}|_{\breve{\mathcal C}_{i,1}}=\pi|_{\breve{\mathcal C}_{i,1}}$  and $\pi_{\bold b_i}|_{\breve{\mathcal C}_{i,2}}=\pi|_{\breve{\mathcal C}_{i,2}}$ give isomorphisms  in the outside of singular fibers of $\pi_{i,1}$ and $\pi_{i,2}$ respectively,  the fiber   $\pi_{\bold b_i}^{-1}(x)$ is contained in $\breve{\mathcal C}_{i,1}$. This shows the first statement in our lemma. 

The fiber of $\breve\pi_{i,1}:\breve{\mathcal C}_{i,1}\rightarrow X\rightarrow \mathbb P^1$ over  the singular fiber $\ell_{\iota_1}\cup\ell_{\iota_2}$ of $\pi_{i,1}$ consists of the  proper transforms
$\hat{\ell_{\iota_1}}$ and $\hat{\ell_{\iota_2}}$ 
of  $\ell_{\iota_1}$ and $\ell_{\iota_2}$ respectively, and  $2\ell$ where $\ell$ is the exceptional curve over $x$. Clearly $\ell$ is a fiber of $\pi:\mathbb P(X)\rightarrow X$ and hence $\ell\cdot \zeta|_{\breve{\mathcal C}_{i,1}}=1$.  From this and  $(\mbox{fiber of } \breve{\pi}_{i,1})\cdot \zeta|_{\breve{\mathcal C}_{i,1}}=0$, it follows that 
$\hat{\ell_{\iota_1}}\cdot \zeta|_{\breve{\mathcal C}_{i,1}}=-1$ and $\hat{\ell_{\iota_2}}\cdot \zeta|_{\breve{\mathcal C}_{i,1}}=-1$. This implies that  $\hat{\ell_{\iota_1}}=\ell_{\iota_1}'$ and $\hat{\ell_{\iota_2}}=\ell_{\iota_2}'$. We are done. 
\end{proof}


We know that  the five $K_{\bold b_i}$ are  reducible. Next lemma shows that there is no other reducible  $K_{\be}$.  

\begin{lemma}\label{l.characterization of irreducible fibers}
 For any  $\be\in\mathbb P^1\setminus\{\bold b_1,\ldots,\bold b_5\}$, $K_{\be}$ is irreducible and the morphism $\pi_{\be}:K_{\be}\rightarrow X$ is a double cover, i.e., a finite morphism of degree 2. 
\end{lemma}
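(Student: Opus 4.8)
The plan is to treat the two assertions separately, the degree-two statement being nearly free and irreducibility carrying all the content. Since $\be\neq\bold b_i$, Lemma~\ref{l.reducible and singular} gives that $Q_{\be}$ is smooth, so Lemma~\ref{l.double cover} tells us every fibre $\pi_{\be}^{-1}(x)$ is a length-two subscheme. In particular no $\pi$-fibre lies in $K_{\be}$, whence $\pi_{\be}\colon K_{\be}\to X$ is finite; being everywhere of fibre-length two over the integral surface $X$, it is a finite morphism of degree $2$. It remains to prove $K_{\be}$ irreducible.

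For this I would argue by contradiction. As $\pi_{\be}$ is finite and $[K_{\be}]\cdot f=2\zeta\cdot f=2$ for a $\pi$-fibre $f$, a reducible (or non-reduced) $K_{\be}$ must be $A_1+A_2$ with each $A_j\cdot f=1$. The non-reduced case $K_{\be}=2A$ forces the fibrewise quadratic form $\sigma_{\be}\in H^0(X,\Sym^2T_X)$ cutting out $K_{\be}$ to be a fibrewise square, i.e.\ $\sigma_{\be}=\ell^2$ with $\ell\in H^0(\zeta)=H^0(X,T_X)=0$ (the graded ring $\CC[Q_1,Q_2]$ has no degree-one part); this is impossible. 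So $A_1,A_2$ are distinct sections of $\pi\colon\PP(T_X)\to X$, each isomorphic to $X$ and given by a saturated rank-one quotient $T_X\twoheadrightarrow T_X/M_j$, with $[A_j]=\zeta-\pi^*M_j$. The relation $[A_1]+[A_2]=2\zeta$ forces $M_2=-M_1$, so both $M_1$ and $-M_1$ embed as sub-line-bundles of $T_X$; equivalently $\sigma_{\be}$ factors into two fibrewise-linear forms, i.e.\ its fibrewise discriminant $\operatorname{disc}(\sigma_{\be})\in H^0(X,-2K_X)$ is a perfect square.

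Next I would extract numerical constraints from the base locus. Because $B=\sum_{k=1}^{16}\ell_k'$ lies on every member of $|2\zeta|$, each $\ell_k'$ is a curve on $A_1$ or $A_2$. Under $A_j\cong X$ one has $\zeta|_{A_j}\cong T_X/M_j$, so $\ell_k'\subset A_j$ and $\zeta\cdot\ell_k'=-1$ give $(-K_X-M_j)\cdot\ell_k=-1$; since $(-K_X)\cdot\ell_k=1$ for every one of the sixteen $(-1)$-curves $E_i,\ \ell_{i,j},\ C$, this yields $M_j\cdot\ell_k=2$. With $M_2=-M_1$ we conclude $M_1\cdot\ell_k=\pm2$ for all sixteen lines, a linear system in $\operatorname{Pic}(X)$ whose solutions form a short explicit list.

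The main obstacle, and where I would concentrate, is closing the gap between these necessary numerical conditions and the embeddability requirement that $M_1$ \emph{and} $-M_1$ both be genuine saturated sub-line-bundles of $T_X$. Two families of numerical solutions must be disposed of. The large-degree ones (e.g.\ $M_1=\pm2K_X$, of $(-K_X)$-degree $\pm8$) are excluded because a saturated sub-line-bundle of $T_X$ and its negative cannot both have $(-K_X)$-degree of modulus $8$, using the bound on sub-line-bundle degrees coming from the total dual VMRT analysis of \cite{HLS}. The remaining, degree-zero solutions are exactly $M_{i,1}=L-\sum_kE_k+2E_i$ and their negatives; but these are realised only as \emph{sub-sheaves} of $T_X$ whose saturation drops rank at four points, so the associated surface in $\PP(T_X)$ is not a section at all but the blow-up $\breve{\mathcal C}_{i,j}$ of Lemma~\ref{l.VMRT fibration}. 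Hence no honest section $A_j$ can carry these classes, which is precisely what a split smooth fibre would demand; the contradiction shows $K_{\be}$ is irreducible. Equivalently, $\operatorname{disc}(\sigma_{\be})$ is a perfect square only at the five singular values $\bold b_i$. A more computational route to the same obstacle is to evaluate $\operatorname{disc}(\sigma_{\be})$ in the coordinates of Lemma~\ref{l.del Pezzo blow up} and verify that the resulting conic $\be\mapsto\operatorname{disc}(\sigma_{\be})$ in $\PP H^0(X,-2K_X)$ meets the Veronese of perfect squares exactly at $\bold a_1,\dots,\bold a_5$.
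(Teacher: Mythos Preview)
Your two opening paragraphs are fine and match the paper exactly: smoothness of $Q_{\be}$ plus Lemma~\ref{l.double cover} gives that every fibre of $\pi_{\be}$ has length two, hence $\pi_{\be}$ is finite of degree two; and any putative splitting $K_{\be}=A_1+A_2$ forces each $A_j$ to be a genuine section $A_j\cong X$ with $[A_j]=\zeta-\pi^*M_j$ and $M_2=-M_1$. The non-reduced case via $H^0(X,T_X)=0$ is also correct.

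The remainder is a genuinely different and considerably longer route than the paper's. The paper disposes of irreducibility in one line by invoking ``the same reason as in the proof of Lemma~\ref{l.reducible and singular}'': once the components are sections isomorphic to $X$, one produces an immediate numerical contradiction from the presence of some $\ell_k'\subset A_j$ together with the fibre class of one of the conic fibrations. Your approach instead classifies all $M_1\in\operatorname{Pic}(X)$ with $M_1\cdot\ell_k\in\{\pm2\}$ and then rules each out as a saturated sub-line-bundle. That is a legitimate strategy, but there is a slip in the execution: the degree-zero solutions of your system are \emph{not} the classes $M_{i,1}=L-\sum_kE_k+2E_i$ themselves (indeed $M_{i,1}\cdot E_j=1$ for $j\neq i$), but rather their doubles $\pm 2M_{i,1}$; a short case analysis on $b_i:=M_1\cdot E_i\in\{\pm2\}$ shows the complete solution set is $\{\pm2K_X\}\cup\{\pm2M_{i,1}:i=1,\dots,5\}$. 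Consequently your ``saturation drops at four points, so one gets $\breve{\mathcal C}_{i,j}$ instead of a section'' paragraph does not apply as written. The cleanest repair is the $c_2$-obstruction you implicitly have available: a saturated sub-line-bundle $M\subset T_X$ with line-bundle quotient must satisfy $M\cdot(-K_X-M)=c_2(T_X)=8$, whereas $(\pm2K_X)\cdot(-K_X\mp2K_X)\in\{-24,16\}$ and $(\pm2M_{i,1})\cdot(-K_X\mp2M_{i,1})=-4M_{i,1}^2=16$, so none of the twelve candidates survives. With this correction your argument goes through. Compared with the paper's one-line reduction, your method buys an explicit Picard-lattice classification of potential splittings, at the cost of a full case analysis; the paper's route is quicker but leans on the specific intersection behaviour already isolated in Lemma~\ref{l.reducible and singular}.
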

\begin{proof}
 Since $Q_{\be}$ is smooth,  Lemma \ref{l.double cover} says that the morphism $\pi_{\be}$ is a finite morphism of degree 2. 
If $K_{\be}$ is reducible, then it is a union of two irreducible components which are isomorphic to $X$. We have a contradiction by the  same reason as in the proof of Lemma~\ref{l.reducible and singular}.
\end{proof}

For each $\be \in \mathbb P^1\setminus \{\bold b_1,\ldots,\bold b_5\}$, 
let $D_{\be}\subset X$ be the branch curve of the double covering $\pi_{\be}:K_{\be}\rightarrow X$ (see Lemma \ref{l.characterization of irreducible fibers});
 the branch curve $D_{\be}$ is the locus of points $x$ such that $Q_\be\cap {\bold T}_x X$ is a double line. 

\subsubsection{ General $K_{\be}$}
For a general ${\bold e}\in \mathbb P^1_{e_1,e_2}\setminus\{\bold b_1,\ldots, \bold b_5\}$, 
$K_\be$ is a K3 surface of degree 8 of Kummer type; Since $K_{\PP(T_X)}=-2\zeta$, $K_{K_\be}=\cO_{K_\be}$. So the branch curve $D_\be\subset X$ of the double covering $\pi_{\bold e}=\pi|_{K_{\bold e}}:K_{\bold e}\rightarrow X$ is in $|\cO_X(2)|$ because $-K_X=\cO_X(1)$ where $\cO_X(1)$ is a hyperplane section of $X$ in $\PP^4$. Therefore $D_\be$ is a nonsingular curve of genus 5 with degree 8 in $\PP^4$, and tangent to all 16 lines $\ell_i$ in $X$. And the lifts $\ell_i'\subset\mathbb P(T_X)$ of the  16 lines $\ell_i$ in $X$ as in the introduction are   $(-2)$-curves $\ell_{\be,i}$ in $K_\be\subset{\rm Bl}_B\mathbb P(T_X) $. These 16 (-2)-curves $\ell_{\be,i}$ are the intersection of  $K_\be$ with the exceptional divisor $D$ of $\mu_B:{\rm Bl}_B\mathbb P(T_X)\rightarrow \mathbb P(T_X)$. By the blow-down $\mu:X\rightarrow \PP^2$, $D_\be$ goes to a plane sextic curve with five cusps at $\{ p_1, \ldots, p_5\}$.

By the above explanation, we obtain the following lemma. 

\begin{lemma}\label{l.general K} For a general ${\bold e}\in \mathbb P^1_{e_1,e_2}\setminus\{\bold b_1,\ldots,\bold b_5\}$, 
$K_\be$ is a K3 surface of degree 8 of Kummer type. It has 16 (-2)-curves $\ell_{\bold e,i}$  which are intersection of  $K_{\bold e}$ with the exceptional divisor $D$ of the blow-up $\mu_B:{\rm Bl}_B\mathbb P(T_X)\rightarrow \mathbb P(T_X)$.  
\end{lemma}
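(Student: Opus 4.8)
The plan is to use the double-cover description of Lemma~\ref{l.characterization of irreducible fibers} to compute the birational invariants of $K_{\be}$ by adjunction, to recognise it as a K3 surface, and then to exhibit the sixteen $(-2)$-curves explicitly and deduce the Kummer structure from them.

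First I would settle smoothness and the canonical class. For $\be\notin\{\bold b_1,\dots,\bold b_5\}$ the map $\pi_{\be}\colon K_{\be}\to X$ is a finite double cover (Lemma~\ref{l.characterization of irreducible fibers}), branched along the divisor $D_{\be}\subset X$ where $Q_{\be}\cap\mathbf T_xX$ degenerates to a double line; for general $\be$ this $D_{\be}$ is a smooth member of $|\cO_X(2)|=|-2K_X|$, so the cover $K_{\be}$ is smooth. Viewing $K_{\be}$ as a smooth member of $|2\zeta|$ and using $K_{\PP(T_X)}=-2\zeta$, adjunction gives $K_{K_{\be}}=(K_{\PP(T_X)}+2\zeta)|_{K_{\be}}=\cO_{K_{\be}}$. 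Writing $D_{\be}\in|2L|$ with $L=-K_X=\cO_X(1)$, the splitting $\pi_{\be*}\cO_{K_{\be}}=\cO_X\oplus L^{-1}=\cO_X\oplus K_X$ yields
\[
H^1(K_{\be},\cO_{K_{\be}})=H^1(X,\cO_X)\oplus H^1(X,K_X)=0
\]
since $X$ is rational and by Serre duality; combined with $K_{K_{\be}}=\cO$ this makes $K_{\be}$ a K3 surface. Its degree is $(\pi_{\be}^{*}\cO_X(1))^2=2\,(-K_X)^2=8$, while adjunction on $X$ gives $g(D_{\be})=5$ and $\deg D_{\be}=8$.

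Next I would produce the sixteen $(-2)$-curves. Each lift $\ell_i'$ is a section of $\PP(T_X|_{\ell_i})\to\ell_i$ with $\ell_i'\cdot\zeta=-1$, hence lies in the base locus, and $B=\bigsqcup_{i=1}^{16}\ell_i'$ is a disjoint union. The general smooth $K_{\be}$ contains these sixteen disjoint rational curves, realised after $\mu_B$ as $\ell_{\be,i}=K_{\be}\cap D$; since each $\ell_{\be,i}\cong\PP^1$ and any smooth rational curve on a K3 surface has self-intersection $-2$ by adjunction, the $\ell_{\be,i}$ are sixteen pairwise disjoint $(-2)$-curves. To reach ``Kummer type'' I would invoke Nikulin's characterisation: a K3 surface carrying sixteen disjoint $(-2)$-curves is the minimal resolution of $A/\!\pm 1$ for an abelian surface $A$, the curves being the exceptional divisors over the two-torsion points. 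Here $A$ is to be identified with the genus-two Jacobian $\bar S_e$ of Theorem~\ref{level surface}(a), with $\bar S_e\dashrightarrow K_{\be}$ the quotient by the fibrewise involution $\pm 1$ of $T_X^{*}$.

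The hard part will be this last step. The invariant computation is essentially formal once $D_{\be}$ is smooth, but making ``Kummer type'' rigorous requires either checking the evenness of the set $\{\ell_{\be,i}\}$ needed for Nikulin's theorem (for sixteen disjoint nodes this evenness is automatic, but it must be cited with care) or establishing the isomorphism $K_{\be}\cong\bar S_e/\!\pm 1$ without circular use of Theorem~\ref{level surface}(a), whose proof draws on the present lemma. A secondary point is the Bertini-type smoothness of the general branch curve: $\{D_{\be}\}_{\be\in\PP^1}$ is only a pencil inside $|\cO_X(2)|$, so one must verify that its general member stays off the discriminant, which the reducibility analysis concentrates at the five points $\bold b_i$.
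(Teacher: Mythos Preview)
Your approach is essentially the paper's: adjunction from $K_{\PP(T_X)}=-2\zeta$ gives $K_{K_{\be}}=\cO$, the branch curve lies in $|\cO_X(2)|=|{-2K_X}|$, and the sixteen $\ell_i'$ sit in $K_{\be}$ as disjoint $(-2)$-curves arising from $K_{\be}\cap D$. You are in fact more careful than the paper on two points: you compute $q(K_{\be})=0$ via $\pi_{\be*}\cO_{K_{\be}}=\cO_X\oplus K_X$, which the paper leaves implicit, and you name Nikulin's criterion to justify ``Kummer type,'' whereas the paper simply asserts it and then (in the remark following the lemma) cites Skorobogatov's explicit realisation of $K_{\be}$ as a complete intersection of three diagonal quadrics in $\PP^5$ determined by six parameters $\theta_1,\dots,\theta_6$---that description is an alternative route to the Kummer structure if you want to avoid Nikulin.

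Your two flagged worries are the right ones but both dissolve here. Smoothness of a general $K_{\be}$ is handled earlier in the paper by citing a strong Bertini theorem (Corollary~2.4 of \cite{DH}) applied to $|2\zeta|$, so you need not argue about the pencil of branch curves directly. And there is no circularity: the paper's logical flow is \emph{this} lemma for general $\be$ $\Rightarrow$ description of general $S_e$ $\Rightarrow$ Theorem~\ref{level surface} for all $e$ $\Rightarrow$ Corollary upgrading the present lemma to all $\be\neq\bold b_i$; your identification of $\bar S_e$ with an abelian surface can therefore be read as an output of the Kummer structure, not an input to it.
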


\begin{remark} We know
\[ \chi_{\rm top}({\rm Bl}_B \PP(T_X)) =\chi_{\rm top}(\PP(T_X))+32= \chi_{\rm top}(X)\cdot 2+32 = 48.\]
Since $\chi_{\rm top}(\text{K3 surface})=24$ and $\chi_{\rm top}(K_{\bold b_i})=24$ for all $i=1, \ldots, 5$, $\chi_{\rm top}(K_\be)=24$ for a general ${\bold e}\in\mathbb P^1_{e_1,e_2}$ and $\{\bold b_1, \ldots, \bold b_5\} \subset \mathbb P^1_{e_1,e_2}$. This seems to imply that for every ${\bold e}\in \mathbb P^1_{e_1,e_2}\setminus\{\bold b_1,\ldots,\bold b_5\}$, $K_\be$ is a K3 surface of degree 8 of Kummer type. In Corollary~\ref{final corollary}, we prove that this is true by considering on the Lagragian fibration of the map $T_X^*\to\CC^2_{e_1,e_2}$. 
\end{remark}

\begin{remark} By the result by Skorobogatov (Theorem~3.1 in \cite{Sk}), we have more explicit description of $K_\be$ for a general ${\bold e}\in \mathbb P^1_{e_1,e_2}\setminus\{\bold b_1,\ldots,\bold b_5\}$. There exists an embedding 
 $K_{\bold e}\subset \mathbb P^5_{y_1,\ldots,y_6}$ so that it is defined by 
$$\sum_{i=1}^6Q'(\theta_i)^{-1}y_i^2=\sum_{i=1}^6Q'(\theta_i)^{-1}\theta_iy_i^2
=\sum_{i=1}^6Q'(\theta_i)^{-1}\theta_i^2y_i^2=0$$
where $\theta_1,\ldots,\theta_5$ are the same $\theta_i$s in Lemma~\ref{l.del Pezzo blow up},  $\theta_6$ is determined by $K_\be$, and 
$$Q(t):=\prod_{i=1}^6(t-\theta_i).$$ 
Furthermore the restriction of the projection map 
 \begin{center}$\mathbb P^5_{y_1,\ldots,y_6}\dashrightarrow \mathbb P^4_{y_1,\ldots,y_5}$,   
 $(y_1:\cdots:y_6)\mapsto (y_1:\ldots:y_5)$\end{center} gives a double cover 
 $\pi_{\bold e}:K_{\bold e}\rightarrow X$ 
 branched on a degree 8 curve $D_{\bold e}$ in $X$ defined by 
 $$ \sum_{i=1}^5Q'(\theta_i)^{-1}y_i^2=\sum_{i=1}^5Q'(\theta_i)^{-1}\theta_iy_i^2
=\sum_{i=1}^5Q'(\theta_i)^{-1}\theta_i^2y_i^2=0.$$
\end{remark}

\subsubsection{ Reducible $K_{\be}$}

When $K_\be$ goes to $K_{\bold b_i}=\breve{\mathcal C}_{i,1}\cup \breve{\mathcal C}_{i,2}$, the branch curve $D_\be$ of $\pi_\be:K_{\be}\rightarrow X$ goes to 2$E_{\bold b_i}$ where $ E_{\bold b_i}$ is an elliptic curve which is a hyperplane section of $X$ in $\PP^4$. The image of $E_{\bold b_i}$ of the blow-up $\mu:X\rightarrow \mathbb P^2$ is a cubic curve   in $\mathbb P^2$  tangent to  the line $\ell_{i,k}$ at $p_k$ for each  $k\in\{1,\ldots,5\}\setminus \{i\}$. For each $i$, this cubic plane curve is uniquely determined by this property. 

We can observe  that $E_{\bold b_i}$ is the closure of the locus of $x$ in $X$ such that  some two conics in $X$ which are members of  $\mathcal K_{i,1} $ and $\mathcal K_{i,2}$ respectively  tangentially intersect at $x$. 

\begin{lemma}\label{l.intersection of E and fibers}
$E_{\bold b_i}$ meet smooth fibers of $\pi_{i,j}:X\rightarrow \mathbb P^1$ at two  distinct points,  and  the singular fibers of it at the singular points. 
\end{lemma}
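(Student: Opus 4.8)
The plan is to exhibit $X$ as a finite double cover of $\PP^1\times\PP^1$ through the two conic fibrations simultaneously, and to read both assertions off the branch curve. First I would form the morphism
\[ f:=(\pi_{i,1},\pi_{i,2}):X\longrightarrow \PP^1\times\PP^1 .\]
Writing $F_1,F_2$ for general fibres of $\pi_{i,1},\pi_{i,2}$, one has $f^*\cO(1,0)=[F_1]=L-E_i$ and $f^*\cO(0,1)=[F_2]=2L-\sum_{k\ne i}E_k$ in ${\rm Pic}(X)$, so $F_1\cdot F_2=2$ and $f$ is generically two-to-one. Since $f^*\cO(1,1)=[F_1]+[F_2]=3L-\sum_kE_k=-K_X$ is ample, no curve $\Gamma$ can satisfy $\Gamma\cdot F_1=\Gamma\cdot F_2=0$; hence $f$ contracts nothing and is a finite degree-two cover. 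The ramification formula $K_X=f^*K_{\PP^1\times\PP^1}+R$ together with $f^*K_{\PP^1\times\PP^1}=f^*\cO(-2,-2)=2K_X$ gives $R\in|-K_X|$ and branch curve $B:=f(R)\in|\cO(2,2)|$. By the characterisation of $E_{\bold b_i}$ recalled just above, $R$ is precisely the locus where the members of $\mathcal K_{i,1}$ and $\mathcal K_{i,2}$ through a point are tangent, i.e. $R=E_{\bold b_i}$; as $E_{\bold b_i}$ is smooth, so is $B$, and $f$ restricts to an isomorphism $f|_{E_{\bold b_i}}:E_{\bold b_i}\xrightarrow{\ \sim\ }B$.

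Next I would analyse the conic fibres through the branch curve. For a ruling $R_s:=\{s\}\times\PP^1$ of the first factor, the fibre $\pi_{i,1}^{-1}(s)=f^{-1}(R_s)$ is the double cover of $R_s\cong\PP^1$ branched along the length-two scheme $B\cap R_s$. If $B\cap R_s$ consists of two reduced points the cover is a smooth rational conic, whereas if $R_s$ is tangent to $B$ the two branch points collide, the cover acquires a node, and that node is the unique point of $X$ lying over the tangency point $B\cap R_s$, hence a ramification point of $f$, i.e. a point of $E_{\bold b_i}$. The identical argument with rulings of the second factor applies to $\pi_{i,2}$, so both fibrations are handled uniformly.

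I would then conclude by transporting the intersection along $f|_{E_{\bold b_i}}$. Since $f(E_{\bold b_i})=B$, the scheme $E_{\bold b_i}\cap\pi_{i,j}^{-1}(s)=(f|_{E_{\bold b_i}})^{-1}(R_s)$ maps isomorphically onto $B\cap R_s$. For a smooth fibre $B\cap R_s$ is two distinct points, so $E_{\bold b_i}$ meets the fibre in two distinct points; for a singular fibre $B\cap R_s$ is a single double point whose preimage is the node, so $E_{\bold b_i}$ passes through the singular point of that fibre, as claimed. As a consistency check, ${\rm pr}_j|_B:B\to\PP^1$ is a degree-two map from the elliptic curve $B$, hence by Riemann--Hurwitz has exactly four branch points, matching the four singular fibres of $\pi_{i,j}$ established earlier.

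I expect the main obstacle to be the clean set-up of the double cover: proving finiteness of $f$ and, above all, identifying the node of each reducible conic fibre with a single ramification point of $f$ (equivalently, a point of $E_{\bold b_i}$) rather than with two separate intersection points. A more computational alternative for $\pi_{i,1}$ runs through the plane model, since the cubic $\mu(E_{\bold b_i})$ is tangent to $\overline{p_ip_k}$ at $p_k$, which forces the unique point $E_{\bold b_i}\cap E_k$ to be exactly $\ell_{i,k}\cap E_k$; but this becomes awkward for the singular fibres of $\pi_{i,2}$, whose nodes lie over general points of $\PP^2$, and this is why I prefer the uniform double-cover argument above.
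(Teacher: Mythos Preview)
Your argument is correct and takes a genuinely different route from the paper. The paper works directly in the plane model: for $\pi_{i,2}$ it notes that a smooth conic through $\{p_1,\ldots,p_5\}\setminus\{p_i\}$ admits exactly two tangent lines from $p_i$, and that when the conic degenerates these tangents collapse to the double line through $p_i$ and the node; the case of $\pi_{i,1}$ is then declared analogous. Your approach instead packages both conic pencils into the finite double cover $f=(\pi_{i,1},\pi_{i,2}):X\to\PP^1\times\PP^1$, identifies $E_{\bold b_i}$ with the ramification divisor via the ``tangential intersection'' description given just before the lemma, and then reads both assertions off the smooth $(2,2)$ branch curve and its intersection with the two rulings.

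What each approach buys: the paper's argument is completely elementary---it is just classical plane conic geometry---but it treats $\pi_{i,1}$ and $\pi_{i,2}$ separately and the $\pi_{i,2}$ case (whose nodes do not sit over the blown-up points) is only sketched. Your double-cover argument is more structural and handles both fibrations uniformly; it also makes the Riemann--Hurwitz count of four singular fibres transparent. One small remark: you do not actually need the smoothness of $E_{\bold b_i}$ to deduce that $B$ is smooth---this already follows from the smoothness of $X$, since a double cover of a smooth surface is smooth if and only if its branch divisor is. With that observation the identification $f|_{E_{\bold b_i}}:E_{\bold b_i}\xrightarrow{\sim}B$ is immediate and the rest of your argument goes through cleanly.
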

\begin{proof}
Given a smooth conic curve in $\mathbb P^2$ through four points $\{p_1,\ldots,p_5\}\setminus \{p_i\}$, there are two distinct lines in $\mathbb P^2$ through the point $p_i$ which are tangent lines of the given conic curve. When this smooth conic specializes to a singular conic in $X$, the above two distinct lines in $\mathbb P^2$ goes to the unique  double line in $\mathbb P^2$ through $p_i$ and  the singular point of that singular conic curve. This shows our lemma for $\pi_{i,2}$. The proof for the fibers of $\pi_{i,1}$ can be done in a similar method.  
 \end{proof}
 From Lemma~\ref{l.intersection of E and fibers}, it follows that the restriction $\pi_{i,j}|_{E_{\bold b_i}}:E_{\bold b_i}\rightarrow \mathbb P^1$ is a double cover branched at the four singular values of $\pi_{i,j}:X\rightarrow \mathbb P^1$. 


The intersection curve between two components $\breve{\mathcal C}_{i,1}$ and $\breve{\mathcal C}_{i,2}$ is the proper transform of $E_{\bold b_i}$ of the blow-up $\pi|_{\breve{\mathcal C}_{i,j}}:\breve{\mathcal C}_{i,j}\to X$. We will denote it by the same notation $E_{\bold b_i}$.   By Lemma~\ref{l.intersection of E and fibers}, $E_{\bold b_i}$ intersects two  distinct points at each smooth fiber of $\breve{\pi}_{i,j}:\breve{\mathcal C}_{i,j}\to\PP^1$, and one point with multiplicity two at the exceptional curve of singular fibers of it; We note that the multiplicity of this exceptional curve is two in a singular fiber.

\unitlength 0.6mm 
\linethickness{1pt}
\ifx\plotpoint\undefined\newsavebox{\plotpoint}\fi 
\begin{picture}(100,160)(20,70)

	\put(104.25,110){ \vector(0,-1){13}}

	\put(50,128){\small $X$}
	\put(50,93){\small  $\mathbb P^1$}
	\put(94,103){\tiny$\pi_{5,1}$}
	\put(90,138){\tiny$\ell_{1,5}$}
	\put(102,138){\tiny $\ell_{2,5}$}
	\put(114,138){\tiny $\ell_{3,5}$}
	\put(126,138){\tiny $\ell_{4,5}$}
	\put(90,113){\tiny $E_1$}
	\put(102,113){\tiny $E_2$}
	\put(114,113){\tiny $E_3$}
	\put(126,113){\tiny $E_4$} 
	
	\qbezier(79,138.25)(72.625,127.75)(79.25,120.25)
	\qbezier(140,138.25)(133.625,127.75)(140.25,120.25)
	
	\put(75,93){\line(1,0){70}}
	\put(165,93){\line(1,0){70}}

	\put(194.25,110){\vector(0,-1){13}}
	\put(184,103){\tiny $\pi_{5,2}$}
	\put(180,138){\tiny $\ell_{1,2}$}
	\put(192,138){\tiny $\ell_{1,3}$}
	\put(204,138){\tiny $\ell_{1,4}$}
	\put(216,138){\tiny $C$}
	\put(180,113){\tiny $\ell_{3,4}$}
	\put(192,113){\tiny $\ell_{2,4}$}
	\put(204,113){\tiny $\ell_{2,3}$}
	\put(216,113){\tiny $E_5$} 
	
	\qbezier(169,138.25)(162.625,127.75)(169.25,120.25)
	\qbezier(230,138.25)(223.625,127.75)(230.25,120.25)


	\multiput(173,126)(.03,.04){250}{\line(0,1){.04}}
	\multiput(185,126)(.03,.04){250}{\line(0,1){.04}}
	\multiput(197,126)(.03,.04){250}{\line(0,1){.04}}
	\multiput(209,126)(.03,.04){250}{\line(0,1){.04}}
	\multiput(175,133)(.03,-.07){200}{\line(0,-1){.04}}
	\multiput(187,133)(.03,-.07){200}{\line(0,-1){.04}}
	\multiput(199,133)(.03,-.07){200}{\line(0,-1){.04}}
	\multiput(211,133)(.03,-.07){200}{\line(0,-1){.04}}
	
		\multiput(83,126)(.03,.04){250}{\line(0,1){.04}}
	\multiput(95,126)(.03,.04){250}{\line(0,1){.04}}
	\multiput(107,126)(.03,.04){250}{\line(0,1){.04}}
	\multiput(119,126)(.03,.04){250}{\line(0,1){.04}}
	\multiput(85,133)(.03,-.07){200}{\line(0,-1){.04}}
	\multiput(97,133)(.03,-.07){200}{\line(0,-1){.04}}
	\multiput(109,133)(.03,-.07){200}{\line(0,-1){.04}}
	\multiput(121,133)(.03,-.07){200}{\line(0,-1){.04}}

\put(50,160){ \vector(0,-1){13}}
\put(38,153){ \tiny $\pi_{\bold b_5}$}

\put(104.25,160){ \vector(0,-1){13}}

\put(90,198){\tiny$\ell_{1,5}'$}
\put(102,198){\tiny $\ell_{2,5}'$}
\put(114,198){\tiny $\ell_{3,5}'$}
\put(126,198){\tiny $\ell_{4,5}'$}
\put(90,164){\tiny $E_1'$}
\put(102,164){\tiny $E_2'$}
\put(114,164){\tiny $E_3'$}
\put(126,164){\tiny $E_4'$} 

\qbezier(79,200)(72.625,180)(79.25,170)
\qbezier(138,200)(131.625,180)(138.25,170)

\put(194.25,160){\vector(0,-1){13}}

\put(180,198){\tiny$\ell_{1,2}'$}
\put(192,198){\tiny $\ell_{2,3}'$}
\put(204,198){\tiny $\ell_{1,4}'$}
\put(216,198){\tiny $C'$}
\put(180,164){\tiny $\ell_{3,4}'$}
\put(192,164){\tiny $\ell_{2,4}'$}
\put(204,164){\tiny $\ell_{2,3}'$}
\put(216,164){\tiny $E_5'$}

\put(100,210){\small $\breve{\mathcal C}_{5,1}$}
\put(190,210){\small $\breve{\mathcal C}_{5,2}$}

\put(50,210){\small $K_{\bold b_5}$} 
\put(150,210){\small $\cup$}
\put(67,191){\tiny $E_{\bold b_i}$}
\put(155,191){\tiny $E_{\bold b_i}$}

\put(91,153){\tiny$\pi|_{\breve{\mathcal C}_{5,1}}$}
\put(181,153){\tiny$\pi|_{\breve{\mathcal C}_{5,2}}$}


\put(140,160){\small$\xymatrix @R=9pc @C=10pc{\ar@/^/[d]^{\breve \pi_{5,1}} &\\ & \\ &  }$}
\put(230,160){\small$\xymatrix @R=9pc @C=10pc{\ar@/^/[d]^{\breve\pi_{5,2}} &\\ & \\ &  }$}


\multiput(83,188)(.03,.04){250}{\line(0,1){.04}}
\multiput(95,188)(.03,.04){250}{\line(0,1){.04}}
\multiput(107,188)(.03,.04){250}{\line(0,1){.04}}
\multiput(119,188)(.03,.04){250}{\line(0,1){.04}}
\multiput(84,179)(.03,-.04){250}{\line(0,-1){.04}}
\multiput(96,179)(.03,-.04){250}{\line(0,-1){.04}}
\multiput(108,179)(.03,-.04){250}{\line(0,-1){.04}}
\multiput(120,179)(.03,-.04){250}{\line(0,-1){.04}}
\put(86,196){\line(0,-1){23}}\put(85.3,196){\line(0,-1){23}}
\put(98,196){\line(0,-1){23}}\put(97.3,196){\line(0,-1){23}}
\put(110,196){\line(0,-1){23}}\put(109.3,196){\line(0,-1){23}}
\put(122,196){\line(0,-1){23}}\put(121.3,196){\line(0,-1){23}}

\qbezier(169,198)(162.625,180)(169,170)
\qbezier(227,198)(223,180)(227,170)

\put(176,181){\begin{tikzpicture}
\node[draw,blue,thin,ellipse,minimum height=5pt,minimum width=19pt](e){};
\end{tikzpicture}} 

\put(188,181){\begin{tikzpicture}
\node[draw,blue,thin,ellipse,minimum height=5pt,minimum width=19pt](e){};
\end{tikzpicture}} 

\put(200,181){\begin{tikzpicture}
\node[draw,blue,thin,ellipse,minimum height=5pt,minimum width=19pt](e){};
\end{tikzpicture}}

\put(212,179){\begin{tikzpicture}
    \draw[blue,thin] plot [smooth,tension=1.5] coordinates{(1,0.3) (0,0) (1,-0.3)};
\end{tikzpicture}}

\put(158.5,179){\begin{tikzpicture}
    \draw[blue,thin] plot [smooth,tension=1.5] coordinates{(0,0.3) (1,0) (0,-0.3)};
\end{tikzpicture}}


\multiput(173,188)(.03,.04){250}{\line(0,1){.04}}
\multiput(185,188)(.03,.04){250}{\line(0,1){.04}}
\multiput(197,188)(.03,.04){250}{\line(0,1){.04}}
\multiput(209,188)(.03,.04){250}{\line(0,1){.04}}
\multiput(174,179)(.03,-.04){250}{\line(0,-1){.04}}
\multiput(186,179)(.03,-.04){250}{\line(0,-1){.04}}
\multiput(198,179)(.03,-.04){250}{\line(0,-1){.04}}
\multiput(210,179)(.03,-.04){250}{\line(0,-1){.04}}
\put(176,196){\line(0,-1){23}}\put(175.3,196){\line(0,-1){23}}
\put(188,196){\line(0,-1){23}}\put(187.3,196){\line(0,-1){23}}
\put(200,196){\line(0,-1){23}}\put(199.3,196){\line(0,-1){23}}
\put(212,196){\line(0,-1){23}}\put(211.3,196){\line(0,-1){23}}

\qbezier(169,198)(162.625,180)(169,170)
\qbezier(227,198)(223,180)(227,170)

\put(176,181){\begin{tikzpicture}
\node[draw,blue,thin,ellipse,minimum height=5pt,minimum width=19pt](e){};
\end{tikzpicture}} 

\put(188,181){\begin{tikzpicture}
\node[draw,blue,thin,ellipse,minimum height=5pt,minimum width=19pt](e){};
\end{tikzpicture}} 

\put(200,181){\begin{tikzpicture}
\node[draw,blue,thin,ellipse,minimum height=5pt,minimum width=19pt](e){};
\end{tikzpicture}}

\put(212,179){\begin{tikzpicture}
    \draw[blue,thin] plot [smooth,tension=1.5] coordinates{(1,0.3) (0,0) (1,-0.3)};
\end{tikzpicture}}

\put(158.5,179){\begin{tikzpicture}
    \draw[blue,thin] plot [smooth,tension=1.5] coordinates{(0,0.3) (1,0) (0,-0.3)};
\end{tikzpicture}}

\put(86,181){\begin{tikzpicture}
\node[draw,blue,thin,ellipse,minimum height=5pt,minimum width=19pt](e){};
\end{tikzpicture}} 

\put(98,181){\begin{tikzpicture}
\node[draw,blue,thin,ellipse,minimum height=5pt,minimum width=19pt](e){};
\end{tikzpicture}} 

\put(110,181){\begin{tikzpicture}
\node[draw,blue,thin,ellipse,minimum height=5pt,minimum width=19pt](e){};
\end{tikzpicture}}

\put(122,179){\begin{tikzpicture}
    \draw[blue,thin] plot [smooth,tension=1.5] coordinates{(1,0.3) (0,0) (1,-0.3)};
\end{tikzpicture}}

\put(68.5,179){\begin{tikzpicture}
    \draw[blue,thin] plot [smooth,tension=1.5] coordinates{(0,0.3) (1,0) (0,-0.3)};
\end{tikzpicture}}

\end{picture}

So  we obtain the following  lemma.

\begin{lemma}\label{singular fiber} For each $i=1,\ldots,5$, we have the following description of $K_{\bold b_i}$.
\begin{itemize}
    \item[(i)] $K_{\bold b_i}$ consists two irreducible components $\breve{\mathcal C}_{i,1}$ and $\breve{\mathcal C}_{i,2}$.
    \item[(ii)] Each $\breve{\mathcal C}_{i,j}$ for $j=1, 2$ is isomorphic to the blow-up of four distinct points of $X$. These four points are singular points of the four singular fibers of the conic fibrartion $\pi_{i,j}:X\rightarrow \mathbb P^1$.
    \item[(iii)] $\breve{\mathcal C}_{i,1}\cap \breve{\mathcal C}_{i,2}$ is a smooth elliptic curve $E_{\bold b_i}$.
    \item[(iv)] In the fibration $\breve{\pi}_{i,j}:\breve{\mathcal C}_{i,j}\to\PP^1$, $E_{\bold b_i}$ intersects two distinct points at each smooth fiber, and one point at the exceptional curve of each singular fiber.
\end{itemize} 
\end{lemma}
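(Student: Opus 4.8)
The plan is to assemble the four assertions from the structural results already in place, since each has been essentially prepared by an earlier lemma. I would begin with (i). The relation \eqref{eq.reducible surface}, namely $[\breve{\mathcal C}_{i,1}]+[\breve{\mathcal C}_{i,2}]=2\zeta$, together with the discussion preceding Lemma~\ref{l.reducible and singular}, already identifies $K_{\bold b_i}$ with $\breve{\mathcal C}_{i,1}\cup\breve{\mathcal C}_{i,2}$; since the classes $[\breve{\mathcal C}_{i,1}]$ and $[\breve{\mathcal C}_{i,2}]$ are distinct, these are two genuine components. To see that each is irreducible I would invoke Lemma~\ref{l.VMRT fibration}, where $\pi|_{\breve{\mathcal C}_{i,j}}:\breve{\mathcal C}_{i,j}\to X$ is exhibited as the blow-up of $X$ at four points: a blow-up of the irreducible surface $X$ is irreducible. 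This argument simultaneously delivers (ii), which is word-for-word the content of Lemma~\ref{l.VMRT fibration}, the four centers being the singular points of the four singular fibers of $\pi_{i,j}$.

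For (iii) I would first recall, from the paragraph before the figure, that $\breve{\mathcal C}_{i,1}\cap\breve{\mathcal C}_{i,2}$ is the proper transform under $\pi|_{\breve{\mathcal C}_{i,j}}$ of the curve $E_{\bold b_i}\subset X$. Geometrically this is forced: by Lemma~\ref{l.double cover} the fiber $\pi_{\bold b_i}^{-1}(x)$ over a general $x$ consists of two points, one in each component by Lemma~\ref{l.VMRT fibration}, so the two components meet exactly over the locus where these points coincide, which is the limit of the branch curves $D_{\be}$ degenerating to $2E_{\bold b_i}$. It then remains to confirm that $E_{\bold b_i}$ is a smooth elliptic curve. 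I would argue that $E_{\bold b_i}$ is a hyperplane section of $X\subset\PP^4$, hence lies in $|-K_X|$ since $-K_X=\cO_X(1)$, so by adjunction it has genus one. Its smoothness can be checked on its image in $\PP^2$ under $\mu$, the uniquely determined plane cubic tangent to $\ell_{i,k}$ at $p_k$ for $k\neq i$; this cubic is smooth, and since $\mu$ is an isomorphism near the $p_k$ the proper transform introduces no singularity.

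Part (iv) I would obtain by transporting Lemma~\ref{l.intersection of E and fibers} through the blow-up $\pi|_{\breve{\mathcal C}_{i,j}}$. On $X$ the curve $E_{\bold b_i}$ meets a smooth fiber of $\pi_{i,j}$ in two distinct points and passes through the singular point $x$ of each singular fiber; the two transverse points on a smooth fiber are carried isomorphically to the corresponding smooth fiber of $\breve{\pi}_{i,j}$, while $x$ is replaced by the point of the exceptional curve $\ell$ over $x$ cut out by the tangent direction of $E_{\bold b_i}$ at $x$. Since $E_{\bold b_i}$ is smooth and passes through $x$ only once, its proper transform meets $\ell$ in exactly one point, which is the assertion.

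The step I expect to require the most care is the quantitative analysis at the singular fibers underlying (iv). One must reconcile three facts: that the fiber of $\breve{\pi}_{i,j}$ over a singular value is $\hat{\ell_{\iota_1}}+\hat{\ell_{\iota_2}}+2\ell$ with $\ell$ of multiplicity two (from the proof of Lemma~\ref{l.VMRT fibration}), that $E_{\bold b_i}$ passes through the node $x=\ell_{\iota_1}\cap\ell_{\iota_2}$ rather than through a smooth point (Lemma~\ref{l.intersection of E and fibers}), and that $E_{\bold b_i}$ is smooth at $x$ (from the analysis in (iii)). Together these localize the intersection of the proper transform of $E_{\bold b_i}$ with each singular fiber to a single point lying on $\ell$, where the fiber has multiplicity two, and rule out any spurious intersection with the components $\hat{\ell_{\iota_j}}=\ell_{\iota_j}'$. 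Assembling (i)--(iv) then completes the proof.
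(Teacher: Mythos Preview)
Your proposal is correct and follows essentially the same approach as the paper: the lemma is stated there as a summary (``So we obtain the following lemma'') of the preceding discussion, and you have correctly identified and assembled the relevant inputs---equation~\eqref{eq.reducible surface} and Lemma~\ref{l.reducible and singular} for (i), Lemma~\ref{l.VMRT fibration} for (ii), the description of $E_{\bold b_i}$ as a hyperplane section for (iii), and Lemma~\ref{l.intersection of E and fibers} transported through the blow-up for (iv). Your added care in (iii) about smoothness of $E_{\bold b_i}$ and in (iv) about ruling out intersections with the $\ell_{\iota_j}'$ components is a welcome elaboration on points the paper treats more tersely.
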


\medskip

\subsection{Description of level surfaces $S_e$}
From now on, we want to describe level surfaces $S_e$. As seen in the introduction, $S_e$ is defined by $\Phi^{-1}(e)$ where $$ \Phi:T^*_X\rightarrow \mathbb C^2_{e_1,e_2}=\mathbb C^2$$
is the morphism defined by   $(Q_1,Q_2)$.  Here we consider $Q_i$ as  sections in $H^0(X, \Sym^2T_X)$.

Let $$ \Pi_{e}: S_e\rightarrow X $$ be the restriction of $\Pi:T_X^*\rightarrow X$.  
Take  $e\neq 0\in\mathbb C^2_{e_1,e_2}$ and denote by $\bold e\in \mathbb P^1$   the image point of $e$ under the quotient map  $\mathbb C^2_{e_1,e_2}\setminus\{0\}\rightarrow \mathbb P^1_{e_1,e_2}$. There is a morphism $$\tau_e:S_{e}\rightarrow K_\be$$ induced by the quotient map $T_X^*\dashrightarrow \mathbb P(T_X)$. So we have the following commutative diagram:

\begin{center}{
\begin{tikzcd}
S_e\arrow[dr,"\Pi_{e}"'] \arrow[rr, "\tau_e"]& & K_{\be}\arrow[ld,"\pi_{\bold e}"]\\
& X &
\end{tikzcd}}
\end{center}

Since there is a graded ring isomorphism:
$$\bigoplus_{m=0}^{\infty}H^0(X, \Sym^mT_X)\simeq \mathbb C[Q_1,Q_2],$$
$S_{\lambda e}\cong S_e$ for all $\lambda\in\CC^*$. We have the following diagram of maps.

\unitlength 0.5mm 
\linethickness{1pt}
\ifx\plotpoint\undefined\newsavebox{\plotpoint}\fi 
\begin{picture}(100,110)(20,100)
 \put(90,160)
{$\xymatrix @R=0.5pc @C=1pc {
{\rm Bl}_B\mathbb P(T_X)\ar[r]\ar[rddd]_{\phi}& \mathbb P(T_X)\ar[rd]_{\pi}\ar@{.>}^{\tilde \phi}[ddd]&  &T_X^*\ar@{.>}[ll]\ar[ddd]^{\Phi}\ar[ld]^{\Pi}\\
&  &X & \\
& & &\\
& \mathbb P^1_{e_1,e_2} & &\mathbb C^2_{e_1,e_2}\ar@{.>}[ll]
}$}
\put(145, 177){\tiny $K_{\bold e}$}
\put(147,170){\tiny $\cap$}
\put(146, 190){\tiny $\ell_{i}'$}
\put(147,183){\tiny $\cap$}

\put(178, 130){\tiny $\ell_i$ }
\put(178, 137){\tiny $\cup$}

\put(110,177){\tiny $K_{\bold e}$}
\put(112,170){\tiny $\cap$}

\put(110,190){\tiny $\ell_{\bold e,i}$}
\put(112,183){\tiny $\cap$}

\put(201,177){\tiny $S_e$}
\put(203,170){\tiny $\cap$}
\put(163,177){\tiny $\xymatrix@R=0.5pc @C=1pc{&&\ar[ll]^{\tau_e}}$}
\put(130,177){\tiny $\simeq$}
\put(130,190){\tiny $\simeq$}
\put(200, 110){\tiny $e $}
\put(163, 110){\tiny $\xymatrix@R=0.5pc @C=1pc{&&\ar@{|-{>}}[ll]}$}

\put(148, 110){\tiny $\bold e$}
\end{picture}

\begin{lemma} For every $e\in \CC^2_{e_1,e_2}\setminus\{ (0)\}$, there is an involution $\imath$ on $S_e$ acting freely and the morphism $\tau_{e}:S_e\rightarrow K_\be$ factors through the quotient map $S\rightarrow S/\imath$, i.e., $$\tau_e:S_e\rightarrow S_e/\iota\hookrightarrow K_{\be}$$
 so that 
$S_{e}/\imath=K_\be\setminus\cup_{i=1}^{16}\ell_{\be,i}$

\end{lemma}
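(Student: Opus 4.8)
The plan is to realise $\imath$ as fibrewise negation on the cotangent bundle and then to identify $\tau_e$ with the geometric quotient by $\imath$, exploiting the $\CC^*$-action that scales the fibres of $T^*_X$. First I would set $\imath(q)=-q$, the restriction to $S_e$ of multiplication by $-1$ on the fibres of $\Pi$. Since $Q_1,Q_2\in H^0(X,\Sym^2 T_X)$ are homogeneous of degree $2$ along these fibres, we have $Q_j(-q)=Q_j(q)$, so $\imath$ preserves every level surface $S_e$. A fixed point would satisfy $q=-q$, hence $q=0$; as the zero section meets $S_e$ only when $e=0$, the involution $\imath$ acts freely for $e\neq 0$. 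Because $[q]=[-q]$ in $\PP(T_X)$, the morphism $\tau_e$ is $\imath$-invariant and thus factors as $\tau_e\colon S_e\to S_e/\imath\to K_\be$.

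Next I would compute the image of $\tau_e$ using the $\CC^*$-action, under which $\Phi$ is homogeneous of degree $2$, so that $\Phi(\lambda q)=\lambda^2\Phi(q)$. Consequently the $\CC^*$-orbit of a point $q$ meets $S_e$ exactly in $\{\pm\lambda_0 q\}$, where $\lambda_0^2 Q_j(q)=e_j$, and hence meets $S_e$ either in a single $\imath$-orbit or not at all. Recalling from Section~3 that $K_\be$ is the member $\{Q_\be=0\}$ of $|2\zeta|$ and that the base locus $B=\{Q_1=Q_2=0\}=\bigcup_i\ell_i'$ of $|2\zeta|$ is blown up to produce the curves $\ell_{\be,i}=K_\be\cap D$, a direction $[q]$ lies in the image of $\tau_e$ precisely when $\lambda^2 Q_1(q)=e_1$ and $\lambda^2 Q_2(q)=e_2$ are simultaneously solvable in $\lambda$. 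This holds if and only if $(Q_1(q):Q_2(q))=\be$, i.e.\ $[q]\in K_\be$, and $Q_1(q),Q_2(q)$ are not both zero, i.e.\ $[q]\notin B$. Therefore $\tau_e(S_e)=K_\be\setminus\bigcup_{i=1}^{16}\ell_{\be,i}$, and since this image avoids $B$ the morphism lifts through the blow-up $\mu_B$ as claimed.

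It then remains to promote the set-theoretic bijection $S_e/\imath\to K_\be\setminus\bigcup_i\ell_{\be,i}$ to an isomorphism. On $K_\be\setminus B$ at least one $Q_j$ is nonzero, and the relation $(Q_1(q):Q_2(q))=\be$ forces the corresponding $e_j$ to be nonzero as well; choosing a local trivialisation of the $\CC^*$-bundle $T^*_X\setminus(\text{zero section})\to\PP(T_X)$ gives a fibre coordinate $\lambda$ in which $S_e$ is locally cut out by $\lambda^2=c$ with $c=e_j/Q_j$ nowhere vanishing. Thus $\tau_e\colon S_e\to K_\be\setminus\bigcup_i\ell_{\be,i}$ is a finite \'etale double cover whose nontrivial deck transformation is exactly $\imath$, and passing to the quotient yields $S_e/\imath\cong K_\be\setminus\bigcup_{i=1}^{16}\ell_{\be,i}$. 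I expect the main obstacle to be the bookkeeping that makes this uniform across both the smooth Kummer-type K3 fibres of Lemma~\ref{l.general K} and the five reducible fibres $K_{\bold b_i}$ of Lemma~\ref{singular fiber}: over the finitely many points $x\in X$ where $\pi_\be^{-1}(x)\cong\PP^1$ (cf.\ Lemma~\ref{l.double cover}) the level surface $S_e$ is one-dimensional over $x$ rather than finite over $x$, and one must verify that the local equation $\lambda^2=c$ with $c\neq 0$ still describes $\tau_e$ there, so that the cover remains \'etale and the excluded locus over such $x$ is precisely the intersection of the two relevant curves $\ell_{\be,i}$ with the fibre.
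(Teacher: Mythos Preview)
Your proposal is correct and follows essentially the same approach as the paper: both define $\imath$ as fibrewise negation on $T^*_X$ (the paper in local coordinates $(x,y,u,v)\mapsto(x,y,-u,-v)$, you globally as $q\mapsto -q$), use the degree-$2$ homogeneity of $Q_1,Q_2$ to see that $\imath$ preserves $S_e$ and acts freely off the zero section, and identify $S_e/\imath$ with $K_\be\setminus B$ by passing to the projectivisation and excluding the base locus $\cup_i\ell_i'=\cup_i\ell_{\be,i}$. Your $\CC^*$-action argument and the paper's substitution $t=u/v$ are the same computation; your extra care with the local equation $\lambda^2=c$ to confirm the \'etale double cover is more than the paper spells out but not a different method.
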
 
\begin{proof}
For each $e=(e_1,e_2)$ and  each point in $X$, there is an open  neighborhood $U\cong\mathbb C^2_{x,y}$  of that point such that   $S_{e}|_{\Pi^{-1}(U)}\subset \Pi^{-1}(U)\cong\mathbb C^4_{x,y,u,v}$ is locally defined by  equations  \begin{align*}Q_1&=f(x,y)u^2+g(x,y)v^2+h(x,y)uv=e_1
\mbox{ and  }\\
Q_2&=c(x,y)u^2+d(x,y)v^2+e(x,y)uv=e_2.\end{align*}
 Here $Q_1=H$ and $Q_2=G$ in the notations in  Section 2.3. 
So for a general point in $X$, there are four points in the preimage of the map $\Pi_e: S_e\to X$. And there is a natural involution $\imath: (x,y,u, v) \mapsto (x,y,-u, -v)$ acting freely on $S_e$.

Let $t=\frac{u}{v}$. Then given $(x,y)$, the solution of the equation
$$e_2(ft^2+g+ht)=e_1(ct^2+d+et)$$
gives a fiber of the map $\pi_{\be}:K_\be\to X$ and a fiber of the map $S_e/\imath\to X$. Therefore 
$S_e/\imath\hookrightarrow K_{\be}$. Since the base locus of the linear system $|2\zeta|$ consists exactly of 16 sections  $\ell_{i}'$ which are $\ell_{\be,i}$ in $ K_{\be}$, we have  $S_{e}/\imath= K_\be\setminus\cup_{i=1}^{16}\ell_{\be,i}.$ \end{proof}

\subsubsection{General $S_e$} Take general $e\in \mathbb C^2_{e_1,e_2}$  so that $S_e$ is smooth. 
 The preimage $\pi_{\be}^{-1}(\ell_i)$ of  $\ell_i\subset X$ splits into two curves in $K_\be$, one is  $(-2)$-curves $\ell_{\be, i}$ and the the other is a  conic, denoted by $\tilde\ell_{\be, i}$,  cut by a trope (Remark 8.6.9 in \cite{Dol}). 
Let $\bar K_\be$ be a Kummer quartic surface with 16 nodes obtained by contracting 16 (-2)-curves $\ell_{\be,i}$ in $K_\be$. It is well known that $\bar K_\be$ has a double cover $\bar S_e$ which is an abelian surface. We note that $\bar S_e$ does not contain any rational curve because it is an abelian surface. The level surface $S_e$ for a general $e$ is $\bar S_e\setminus\{{\text{16 points}\}}$ where these 16 points are the preimage of 16 nodes of the double cover $\bar S_e\to\bar K_\be$. Next figure shows these situations.

\unitlength 0.7mm 
\linethickness{1pt}
\ifx\plotpoint\undefined\newsavebox{\plotpoint}\fi 
\begin{picture}(100,80)(70,100)
 \put(90,160)
{\small $\xymatrix @R=2pc @C=3pc {
S_e\ar[r]^{\cong}\ar[d]_{\tiny 2:1}\ar[dr]^{\tau_e}&{\bar{S}_e}\setminus \{\mbox{16 points}\}\ar@{^{(}-{>}}[r]\ar[d]&\bar S_e \ar[d]^{\tiny 2:1}\\
S_e/\imath=K_{\be}\setminus \cup_i\ell_{\be,i}\ar@{^{(}-{>}}[r]& K_{\be}\ar[r]^{\tiny \ell_{\be,i}\mapsto \mbox{\tiny node}}\ar[d]_{\pi_{\be}} &\bar K_{\be}\\
&X &  
}
 $}

\put(180,133){\tiny $\xymatrix @R=1pc @C=1pc{ \ell_{\be,i}\cup \tilde{\ell}_{\be,i}\ar@{|-{>}}[d]\\
\ell_i}$}

\put(230,160){\tiny $\xymatrix @R=2.5pc @C=1pc{\mbox{16 points}\ar@{|-{>}}[d]\\ \mbox{16 nodes}} $}

\put(210,130){\tiny Kummer quartic surface}
\put(210,170){\tiny Abelian surface}

\end{picture}

\begin{remark}
Take $\bar\be\neq \bold b_i\in\mathbb P^1_{e_1,e_2}$.  Let $C_{\bar\be}$ be the smooth curve of genus 2 with 6 Weierstrass points over $\bold b_1,\ldots,\bold b_5,\bar\be\in\mathbb P^1_{e_1,e_2}$ under the hyperelliptic involution $C_{\bar\be}\rightarrow\mathbb P^1_{e_1,e_2}$.  If $K_\be$ is smooth then $S_{e}$ can be embedded into  the Jacobian variety $J_{\bar\be}$ of $C_{\bar\be}$ for some $\bar\be$ so that $J_{\bar\be}\setminus S_{e}$ consists of 16 disjoint points. It is not clear to us that $\bar\be =\be$.
\end{remark}


\bigskip

\subsubsection{ Reducible $S_e$} 

For each $i$,  we take one point $b_i\in\mathbb C^2_{e_1,e_2}$  over $\bold b_i$ under the quotient map $\mathbb C^2_{e_1,e_2}\setminus \{0\}\rightarrow \mathbb P^1_{e_1,e_2}$. We recall that $S_{b_i}\cong S_{\lambda b_i}$ for all $\lambda\in \CC^*$.  
Now let us describe $S_{b_i}$ by using the explicit description of $K_{\bold b_i}$ in Lemma~\ref{singular fiber}.  Let  $A_{i, j}$ be the preimage $\tau_{b_i}^{-1}(\breve{\mathcal C}_{i,j})$ so that $S_{b_i}=A_{i,1}\cup A_{i,2}$. 
The restriction $\tau_{i,j}=\tau_{b_i}|_{A_{i,j}}:A_{i,j}\rightarrow \breve{\mathcal C}_{i,j}$ is a finite morphism of degree 2 whose image is equal to $\breve{\mathcal C}_{i,j}\setminus \cup_{k=1}^{16} \ell_{\be,k}$.  We remark that each  $\breve{\mathcal C}_{i,j}$ contains only 8 (-2) curves and  the multiplicity of the exceptional curves of $\pi|_{\breve{\mathcal C}_{i,j}}: \breve{\mathcal C}_{i,j}\rightarrow X$ is two in the singular fiber of $\breve{\pi}_{i,j}: \breve{\mathcal C}_{i,j}\rightarrow \mathbb P^1$. 

We have a fibration $$\Pi_{i,j}:A_{i, j}\rightarrow \bar E_{i,j}$$  over an elliptic curve $\bar E_{i,j}$,  and a  double cover $\sigma_{i,j}:\bar E_{i,j}\rightarrow \mathbb P^1$  branched on four singular values of $\breve\pi_{i,j}$ and   making the following commutative diagram:

\begin{center}
\begin{tikzcd}
A_{i,j}\arrow[r, "\tau_{i,j}"]\arrow[d,"\Pi_{i,j}"']& \breve{\mathcal C}_{i,j}\arrow[d,"\breve\pi_{i,j}"]\\
\bar E_{i,j}\arrow[r, "2:1"', "\sigma_{i,j}"]&\mathbb P^1
\end{tikzcd}
\end{center}

The preimage $E'_{ b_i}=\tau_{i,j}^{-1}(E_{\bold b_i})\subset A_{i,j}$  intersect two distinct points on each fiber of $\Pi_{i,j}$. Every fiber of $\Pi_{i,j}$ is either $\PP^1$ or $\PP^1\setminus\{{\text{two points}}\}$, and there are four fibers which are $\PP^1\setminus\{{\text{two points}}\}$. 
The restriction of $\tau_{i,j}$ to a fiber of $\Pi_{i,j}$ of the form $\mathbb P^1\setminus \{\mbox{two points}\}$  gives a degree 2 morphism to the exceptional curve in a singular fiber of $\pi_{i,j}$.

Therefore $S_{b_i}$ has two components $A_{i, 1}$ and $A_{i, 2}$, both are ruled surface$\setminus\text{\{8 points\}}$ over an elliptic curve. The intersecting curve between $A_{i, 1}$ and $A_{i, 2}$ is the elliptic curve $E'_{b_i}$.

\unitlength 0.6mm 
\linethickness{1pt}
\ifx\plotpoint\undefined\newsavebox{\plotpoint}\fi 
\begin{picture}(100,165)(50,100)

\put(66,182){\tiny$\cdots$}\put(140,182){\tiny$\cdots$}
\put(230,182){\tiny$\cdots$}\put(157,182){\tiny$\cdots$}

\put(90,198){\line(0,-1){6}}\put(88,189){$\circ$}
\put(90,189.5){\line(0,-1){11}}\put(88,175.5){$\circ$}
\put(90,176){\line(0,-1){6}}
	
\put(102,198){\line(0,-1){6}}\put(100,189){$\circ$}
\put(102,189.5){\line(0,-1){11}}\put(100,175.5){$\circ$}
\put(102,176){\line(0,-1){6}}

\put(114,198){\line(0,-1){6}}\put(112,189){$\circ$}
\put(114,189.5){\line(0,-1){11}}\put(112,175.5){$\circ$}
\put(114,176){\line(0,-1){6}}

\put(126,198){\line(0,-1){6}}\put(124,189){$\circ$}
\put(126,189.5){\line(0,-1){11}}\put(124,175.5){$\circ$}
\put(126,176){\line(0,-1){6}}

\put(100,150){\small \begin{tikzcd}\arrow[d,"\Pi_{i,j}"]\\
\bar E_{i,j}
 \end{tikzcd}}

\put(190,150){\small \begin{tikzcd} 
\arrow[d,"\breve\pi_{i,j}"]\\  \mathbb P^1
 \end{tikzcd}}

\qbezier(75,198)(73,180)(75,170)\put(79,165){\tiny $\mathbb P^1$}\put(73,165){\tiny $\mathbb P^1$}\put(167,165){\tiny $\mathbb P^1$}
\qbezier(80,198)(78,180)(80,170)
\qbezier(133,198)(131,180)(133,170)\put(132,165){\tiny $\mathbb P^1$}\put(138,165){\tiny $\mathbb P^1$}\put(225,165){\tiny $\mathbb P^1$}
\qbezier(138,198)(136,180)(138,170)
\put(180,165){\tiny 4-singular fibers}

\multiput(173,188)(.03,.04){250}{\line(0,1){.04}}
\multiput(185,188)(.03,.04){250}{\line(0,1){.04}}
\multiput(197,188)(.03,.04){250}{\line(0,1){.04}}
\multiput(209,188)(.03,.04){250}{\line(0,1){.04}}
\multiput(174,179)(.03,-.04){250}{\line(0,-1){.04}}
\multiput(186,179)(.03,-.04){250}{\line(0,-1){.04}}
\multiput(198,179)(.03,-.04){250}{\line(0,-1){.04}}
\multiput(210,179)(.03,-.04){250}{\line(0,-1){.04}}
\put(176,196){\line(0,-1){23}}\put(175.3,196){\line(0,-1){23}}
\put(188,196){\line(0,-1){23}}\put(187.3,196){\line(0,-1){23}}
\put(200,196){\line(0,-1){23}}\put(199.3,196){\line(0,-1){23}}
\put(212,196){\line(0,-1){23}}\put(211.3,196){\line(0,-1){23}}
\put(90,165){\tiny 4 - $\mathbb P^1\setminus\{\mbox{2 points\}}$}

\qbezier(169,198)(162.625,180)(169,170)
\qbezier(227,198)(223,180)(227,170)

\put(80,222){\small\begin{tikzcd}
S_{b_i}=A_{i,1}\cup A_{i,2}\\
A_{i,j}\arrow[u,hook]
\end{tikzcd}}

\put(170,222){\small\begin{tikzcd}
K_{\bold b_i}=\breve{\mathcal C}_{i,1}\cup \breve{\mathcal C}_{i,2}\\
\breve{\mathcal C}_{i,j}\arrow[u,hook]
\end{tikzcd}}
\put(145,210){$\longrightarrow$}\put(147,215){\tiny $\tau_{i,j}$}
\put(145,228){$\longrightarrow$}\put(147,233){\tiny $\tau_{b_i}$}
\put(147,205){\tiny $2:1$}

\put(168,135){\line(1,0){62}}
\qbezier(70,133)(90,140)(110,135)
\qbezier(110,135)(130,130)(140,135)
\put(145,140){$\longrightarrow$}
\put(147,136){\tiny $2:1$}
\put(147,145){\tiny $\sigma_{i,j}$}

\put(60,190){\small $E'_{b_i}$}

\put(156,190){\small $E_{\bold b_i}$}

\put(260,198){\line(0,-1){8}}\put(258.5,187.5){$\circ$}
\put(260,188){\line(0,-1){8}}\put(258.5,177.5){$\circ$}
\put(260,178){\line(0,-1){8}}

\put(280,197){\line(0,-1){7}}
\put(280,188){\line(0,-1){8}}
\put(280,178){\line(0,-1){7}}

\put(281,197){\line(0,-1){7}}
\put(281,188){\line(0,-1){8}}
\put(281,178){\line(0,-1){7}}

\put(258,205){\tiny $\tau_{i,j}|_{\mathbb P^1\setminus \{\mbox{2 points}\}}$}
\put(267,190){$\rightarrow$}
\put(267,185){\tiny 2:1}

\put(242,160){\begin{tikzpicture}
\begin{scope}[very thick,dashed]
\draw(0,0) circle (1.6cm);\end{scope}
\end{tikzpicture}}

\put(176,181){\begin{tikzpicture}
\node[draw,blue,thin,ellipse,minimum height=5pt,minimum width=19pt](e){};
\end{tikzpicture}} 

\put(188,181){\begin{tikzpicture}
\node[draw,blue,thin,ellipse,minimum height=5pt,minimum width=19pt](e){};
\end{tikzpicture}} 

\put(200,181){\begin{tikzpicture}
\node[draw,blue,thin,ellipse,minimum height=5pt,minimum width=19pt](e){};
\end{tikzpicture}}

\put(212,179){\begin{tikzpicture}
    \draw[blue,thin] plot [smooth,tension=1.5] coordinates{(1,0.3) (0,0) (1,-0.3)};
\end{tikzpicture}}

\put(158.5,179){\begin{tikzpicture}
    \draw[blue,thin] plot [smooth,tension=1.5] coordinates{(0,0.3) (1,0) (0,-0.3)};
\end{tikzpicture}}

\put(65,185){\begin{tikzpicture}
    \draw[blue, thin] plot [smooth,tension=1.2] coordinates{(0,0.1) (2.3,0.2) (4.6,0.1)};
\end{tikzpicture}}

\put(65,180){\begin{tikzpicture}
    \draw[blue, thin] plot [smooth,tension=1.2] coordinates{(0,0.2) (2.3,0.1) (4.6,0.2)};
\end{tikzpicture}}

\end{picture}

Now we are ready to prove Theorem~\ref{level surface}.

\begin{theorem}\label{level} We have the following description of level surfaces of the map $\Phi:T_X^*\to \CC^2_{e_1,e_2}$.
\begin{itemize}
\item[(a)] For every $e\in\CC^2_{e_1,e_2}\setminus\cup_{i=1}^5 \mathbb C\cdot b_i$, $S_e$ is $\bar S_e\setminus\{{\text{16 points\}}}$ where $\bar S_e$ is isomorphic to the Jacobian variety of a curve of genus two. Here, $\mathbb C\cdot b_i$ is the line in $\mathbb C^2_{e_1,e_2}$ through $b_i$ and the origin 0.

\item[(b)] For each $i=1,\ldots,5$, we have the following description of $S_{b_i}$.
\begin{itemize}
    \item[(i)] $S_{b_i}$ consists two irreducible components $A_{i, 1}$ and $A_{i, 2}$. 
    \item[(ii)] Each $A_{i, j}$  is a ruled surface$\setminus\text{\{8 points\}}$ over an elliptic curve $\bar E_{i,j}$.
    \item[(iii)] $A_{i, 1}\cap A_{i, 2}$ is an elliptic curve $E'_{b_i}$.
    \item[(iv)] In the fibration $A_{i, j}\to\bar E_{i,j}$, $E'_{b_i}$ intersects two distinct points at each fiber.
\end{itemize} 
\end{itemize}
\end{theorem}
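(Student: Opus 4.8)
The strategy is to collect the structural results of the preceding subsections for the two cases, and to reserve the characteristic vector field argument of \cite{HO09} for upgrading part (a) from a general fibre to every fibre lying outside the five lines.

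Part (b) is essentially already in place. Lemma~\ref{singular fiber} gives $K_{\bold b_i}=\breve{\mathcal C}_{i,1}\cup\breve{\mathcal C}_{i,2}$ together with its conic-fibration and elliptic-curve structure. Setting $A_{i,j}:=\tau_{b_i}^{-1}(\breve{\mathcal C}_{i,j})$ yields $S_{b_i}=A_{i,1}\cup A_{i,2}$, and the degree-two maps $\tau_{i,j}=\tau_{b_i}|_{A_{i,j}}$, the fibrations $\Pi_{i,j}\colon A_{i,j}\to\bar E_{i,j}$ over the elliptic curves $\bar E_{i,j}$, the identification of $E'_{b_i}=\tau_{i,j}^{-1}(E_{\bold b_i})$ as the intersection locus, the count of the eight deleted points on each component, and the two-point intersection of $E'_{b_i}$ with every fibre of $\Pi_{i,j}$ were all derived in the discussion following Lemma~\ref{singular fiber}. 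Hence I would simply assemble these to obtain (i)--(iv).

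For part (a) I would first treat a general $e$. By Lemma~\ref{l.general K}, $K_{\be}$ is a K3 surface of degree $8$ of Kummer type carrying the sixteen $(-2)$-curves $\ell_{\be,i}$; contracting them produces the Kummer quartic $\bar K_{\be}$, whose natural double cover $\bar S_e$ is an abelian surface. Combined with the factorisation $\tau_e\colon S_e\to S_e/\imath\hookrightarrow K_{\be}$ and the equality $S_e/\imath=K_{\be}\setminus\bigcup_i\ell_{\be,i}$, this exhibits $S_e$ as $\bar S_e$ minus the sixteen points lying over the sixteen nodes. To identify $\bar S_e$ with a Jacobian, I would invoke Skorobogatov's embedding (cf.\ \cite{Sk}) of $K_{\be}$ as the complete intersection of three diagonal quadrics in $\PP^5$: the associated pencil degenerates over $\theta_1,\dots,\theta_5$ and one further point $\theta_6=\theta_6(\be)$, the double cover of $\PP^1$ branched at these six points is a genus-two curve $C_{\be}$, and the classical Kummer correspondence (cf.\ \cite{Dol}) identifies the principally polarised abelian surface covering $\bar K_{\be}$ with $J(C_{\be})$. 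Smoothness of $C_{\be}$, equivalently $\theta_6(\be)\neq\theta_i$, follows for every $\be\neq\bold b_i$ from the fact that $\bar S_e$ is a genuine (indecomposable) abelian surface, so that $\bar S_e\cong J(C_{\be})$ rather than a degenerate or decomposable principally polarised surface.

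The main obstacle, and the reason the characteristic vector field argument is needed, is the passage from a general $e$ to every $e\in\CC^2_{e_1,e_2}\setminus\bigcup_i\CC\cdot b_i$, since the Kummer-type conclusion of Lemma~\ref{l.general K} was only justified generically. Because $\Phi$ is a Lagrangian fibration (Theorem~\ref{t.lagrangian}) and $Q_1,Q_2$ Poisson-commute, their Hamiltonian vector fields are everywhere tangent to each $S_e$, commute, and are linearly independent on the dense locus where $S_e$ is smooth; integrating them, in the spirit of \cite{HO09}, endows this locus with commuting flows which, after completion, realise $\bar S_e$ as a complex torus, hence an abelian surface. Feeding in the Euler-characteristic identity $\chi_{\rm top}(K_{\be})=24$ valid for all $\be\neq\bold b_i$, one concludes that $K_{\be}$ is a Kummer K3 and $\bar S_e$ an abelian surface for every such $e$, and the polarisation computation identifying $\bar S_e$ with $J(C_{\be})$ is the same as in the general case. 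I expect making this flow-and-completion argument precise, and matching it against the constancy of the Euler characteristic, to be the crux; part (b) and the general case of part (a) are then bookkeeping of the earlier lemmas.
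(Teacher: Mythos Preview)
Your handling of part (b) and of the generic instance of part (a) matches the paper: both invoke the description of $K_{\bold b_i}$ from Lemma~\ref{singular fiber} and the Kummer picture from Lemma~\ref{l.general K} respectively, and assemble them exactly as you describe.

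The divergence is in the passage from a general $e$ to every $e\notin\bigcup_i\CC\cdot b_i$, and here your route has a real gap. You propose to integrate the two commuting Hamiltonian vector fields on the smooth locus of $S_e$ and then ``complete'' to obtain a complex torus, cross-checking with the Euler-characteristic identity $\chi_{\rm top}(K_{\be})=24$. Two problems: first, the completion step is not justified without prior control of the singular locus of $S_e$---an Arnold--Liouville type argument needs the fibre to be smooth (or at least the singular set to be understood) before the $\CC^2$-orbits compactify to a torus, so this is circular. Second, the constancy $\chi_{\rm top}(K_{\be})=24$ away from the $\bold b_i$ is precisely what the paper flags as only heuristic (``seems to imply'' in the remark after Lemma~\ref{l.general K}); Euler characteristics of fibres in a flat family jump at singular fibres, so you cannot use this as input.

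The paper's argument is shorter and avoids both issues by running the Hamiltonian flow as a \emph{contradiction} device rather than a construction. It first observes that for $\be\neq\bold b_i$ the double cover $\pi_{\be}\colon K_{\be}\to X$ (Lemma~\ref{l.characterization of irreducible fibers}) forces $K_{\be}$, and hence $S_e$, to have at worst isolated singularities. Then, if $q\in S_e$ were an isolated singularity, one takes a local coordinate $z$ on the punctured line $\Phi(\bigcup_{\lambda\in\CC^*}S_{\lambda e})\cong\CC^*$ and forms the single Hamiltonian vector field $\nu=\imath_\omega(\Phi^*dz)$ on $T_X^*$; since $\Phi$ is Lagrangian, $\nu$ is tangent to $S_e$, and its flow (a family of automorphisms of the smooth ambient $T_X^*$) must carry the singular point $q$ through a curve of singular points inside $S_e$, contradicting isolatedness. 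Thus $S_e$ is smooth, whence $K_{\be}=S_e/\imath\cup\bigcup_i\ell_{\be,i}$ is smooth, and the generic Kummer description of Lemma~\ref{l.general K} applies verbatim. So the characteristic vector field of \cite{HO09} is used not to build the torus but merely to exclude singularities; once smoothness is in hand, your general-fibre argument finishes the job.
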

\begin{proof} By the above argument, we prove (a) for a general $S_e$ and (b). So it is enough to prove that every $S_e$ satisfies (a). If $S_e$ has non-isolated singularities then $K_{\be}$ has also non-isolated singularities. But we know that $K_\be$ has at most isolated singularities if $\be$ does not belong to $\{ \bold b_1, \dots, \bold b_5\}$;
Since the corresponding quadric $Q_\be$ is smooth, we have $K_{\be}$ is irreducible  and $\pi_{\be}:K_\be\rightarrow X$ is a double cover (see Lemma~\ref{l.characterization of irreducible fibers}) which implies that $K_{\be}$ has at worst isolated singularities. 

Therefore $S_e$ has at worst isolated singularities. We also note that $S_e$ is isomorphic to $S_{\lambda e}$ for all $\lambda\in\CC^*$, so $S_e$ is a general singular fiber. Then by using the idea of the characteristic vector fields in \cite{HO09}, $S_e$ should be smooth by the following reason. 


Suppose $q$ is an isolated singularity of $S_e$. Let ${\bold S}=\cup_{\lambda\in\CC^*} S_{\lambda e}$, which is called a vertical surface in \cite{HO09}. Let $z$ be a local coordinate in a neighborhood of $\Phi(S_e)$ in $\Phi({\bold S})=\CC^*$ and consider the Hamiltonian vector fields 
$$\nu_\imath:=\imath_\omega(\Phi^*dz)$$
by identification $\imath_\omega: T_M^*\to T_M$ where $M=T^*_X$ via using the natural symplectic 2-form $\omega$. Since $\Phi$ is a Lagrangian fibration, these vector fields are tangent to $S_e$.  So we have a flow of singularities in $S_e$ coming from the singularity $q$. Therefore $S_e$ cannot have an isolated singularity.

It also implies that $K_\be$ in $\PP(T_X)$ corresponding to $S_e$ is also smooth on $K_{\be}\setminus \cup_{i=1}^{16}\ell_{\be,i}=S_e/\iota$ because $\iota$ acts freely on $S_e$. Furthermore  we can  check that $K_{\be}$ is smooth along each $\ell_{\be,i}$ which implies that $K_{\be}$ is smooth. 
\end{proof}

By the above theorem, we get the following corollaries.

\begin{corollary} The map $\Phi: T_X^* \to\CC^2_{e_1,e_2}$ is flat.
\end{corollary}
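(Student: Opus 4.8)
The plan is to deduce flatness from the local criterion for flatness (``miracle flatness''): a finite-type morphism $f\colon Y\to Z$ with $Y$ Cohen--Macaulay and $Z$ regular is flat if and only if every nonempty fiber is equidimensional of dimension $\dim Y-\dim Z$. Here $Y=T_X^*$ is smooth, hence Cohen--Macaulay, of dimension $4$, and $Z=\CC^2_{e_1,e_2}$ is regular of dimension $2$. Moreover $\Phi$ is defined by the two regular functions $Q_1,Q_2$, so each fiber $S_e$ is the zero locus in the smooth fourfold $T_X^*$ of the two equations $Q_1=e_1$, $Q_2=e_2$; by Krull's Hauptidealsatz every component of $S_e$ therefore has dimension $\ge 4-2=2$. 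Consequently $\Phi$ is flat if and only if every fiber has dimension $\le 2$ as well. By Theorem~\ref{level} this upper bound holds for all $e\in\CC^2_{e_1,e_2}\setminus\{0\}$, so the only fiber requiring a separate argument is the one over the origin, $S_0=\Phi^{-1}(0)$.

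To bound $\dim S_0$ I would study the restriction $\Pi|_{S_0}\colon S_0\to X$ of the projection $\Pi\colon T_X^*\to X$. For $x\in X$ let $q_1^x,q_2^x\in\Sym^2 T_{X,x}$ denote the two quadratic forms on $T_{X,x}^*$ obtained by evaluating $Q_1,Q_2\in H^0(X,\Sym^2 T_X)$ at $x$. Then the fiber $(\Pi|_{S_0})^{-1}(x)$ is the common zero locus $\{\,\xi\in T_{X,x}^*:q_1^x(\xi)=q_2^x(\xi)=0\,\}$, that is, the affine cone over $B_x:=\{[\xi]\in\PP(T_{X,x}^*):q_1^x(\xi)=q_2^x(\xi)=0\}$. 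Passing to the projectivization globally, the set $\{(x,[\xi])\in\PP(T_X):q_1^x(\xi)=q_2^x(\xi)=0\}$ is precisely the base locus $B$ of the linear system $|2\zeta|$, which was identified above with the disjoint union $\bigsqcup_{i=1}^{16}\ell_i'$ of the $16$ sections $\ell_i'$ of $\PP(T_X|_{\ell_i})\to\ell_i$.

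Two features of $B$ then control $S_0$. First, each $\ell_i'$ is a section of $\pi\colon\PP(T_X)\to X$, so $B$ meets every fiber $\PP(T_{X,x}^*)\cong\PP^1$ in only finitely many points; in particular $B_x$ is never all of $\PP^1$, so there is no $x$ for which both $q_1^x$ and $q_2^x$ vanish identically, and hence every fiber of $\Pi|_{S_0}$ is the cone over a finite set, a union of at most two lines through the origin, of dimension $\le 1$. Second, the image $\pi(B)=\bigcup_{i=1}^{16}\ell_i$ is $1$-dimensional, and for $x\notin\bigcup_i\ell_i$ one has $B_x=\varnothing$, so $(\Pi|_{S_0})^{-1}(x)=\{0\}$ is a single point. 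Decomposing $S_0$ according to the dimension of these fibers gives $\dim S_0=\max\{\,2+0,\ 1+1\,\}=2$, the zero section accounting for the first term and the cones over the $\ell_i'$ for the second.

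With $\dim S_e=2$ now established for every $e\in\CC^2_{e_1,e_2}$, Krull's lower bound forces each fiber to be pure of dimension $2$, and the local criterion for flatness applies to yield that $\Phi$ is flat. The one genuinely new input beyond Theorem~\ref{level} is the estimate $\dim S_0\le 2$, and I expect this to be the only real obstacle: Theorem~\ref{level} covers only the fibers over $\CC^2_{e_1,e_2}\setminus\{0\}$, while the origin must be handled directly. The decisive point is the absence of a fiber of $\pi$ lying entirely in the base locus $B$, which is exactly what prevents $S_0$ from acquiring a three-dimensional component sitting over a curve in $X$.
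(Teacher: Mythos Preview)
Your proof is correct. The decisive step---bounding $\dim S_0\le 2$ by observing that away from the zero section the fiber $\Phi^{-1}(0)$ projects into the base locus $B=\bigsqcup_i\ell_i'$ under $T_X^*\dashrightarrow\PP(T_X)$---is the same one the paper uses. Where you differ is in how flatness is extracted once all fibers are known to be two-dimensional. The paper first invokes a criterion of Koll\'ar (essentially of finite type, pure-dimensional, geometrically reduced fibers) to get flatness over $\CC^2\setminus\{0\}$, and then argues separately that $S_0$ is not an associated point of $\Phi^{-1}(R)$ for any smooth curve $R\ni 0$. You instead apply miracle flatness (Cohen--Macaulay source, regular target, constant fiber dimension) in one stroke. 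Your route is more direct and avoids the two-step argument; the paper's route, by contrast, packages the reducedness of the generic fibers as part of the input. Both are valid, and the geometric content---the analysis of $S_0$ via the sections $\ell_i'$---is identical.
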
 
\begin{proof} Clearly, $T_X^*$ is irreducible and $\CC^2_{e_1,e_2}$ is reduced. Then by using our description of level surfaces and Lemma 10.48 in \cite{Kol}, the map $\Phi: T_X^*\setminus \Phi^{-1}(0) \to\CC^2_{e_1,e_2}\setminus\{0\}$ is flat because because $\Phi$ is essentially of finite type, pure dimensional, and its fibers are geometrically reduced. So it is enough to check $\Phi$ is flat over $\{0\}$.

We recall the following diagram of maps.

\unitlength 0.5mm 
\linethickness{1pt}
\ifx\plotpoint\undefined\newsavebox{\plotpoint}\fi 
\begin{picture}(100,100)(30,100)
 \put(90,160)
{$\xymatrix @R=0.5pc @C=1pc {
{\rm Bl}_B\mathbb P(T_X)\ar[r]\ar[rddd]_{\phi}& \mathbb P(T_X)\ar[rd]_{\pi}\ar@{.>}^{\tilde \phi}[ddd]&  &T_X^*\ar@{.>}[ll]\ar[ddd]^{\Phi}\ar[ld]^{\Pi}\\
&  &X & \\
& & &\\
& \mathbb P^1_{e_1,e_2} & &\mathbb C^2_{e_1,e_2}\ar@{.>}[ll]
}$}
\put(143, 177){\tiny $\ell_i'$}
\put(145,170){\tiny $\cap$}

\put(180, 130){\tiny $\ell_i$ }
\put(179, 137){\tiny $\cup$}

\put(203,177){\tiny $S_0$}
\put(204,170){\tiny $\cap$}

\end{picture}

Since $\tilde\phi$ is defined outside $\cup_{i=1}^{16}{\ell_i'}$,   
  $S_0=\Phi^{-1}(0)$ is contained in the union of the zero section of the map $\Phi$ and the preimage of $\cup_{i}^{16}\ell_i'$ of the quotient  map $T_X^*\dashrightarrow \mathbb P(T_X)$.  This implies that  $S_0$ has dimension two. 

For any smooth affine curve $R\subset \mathbb C^2_{e_1,e_2}$ through $0$,  $S_0$ is not an associated point of $\Phi^{-1}(R)$. This implies that the flatness of the map $\Phi$.  
\end{proof}

In the proof of Theorem~\ref{level} it is proved that $K_{\be}$ is smooth for all $\be\in \mathbb P^1_{e_1,e_2}\setminus\{\bold b_1,\ldots,\bold b_5\}$. From this and Lemma~\ref{l.general K} we get the following corollary. 
\begin{corollary}\label{final corollary} For every ${\bold e}\in \mathbb P^1_{e_1,e_2}\setminus\{\bold b_1,\ldots,\bold b_5\}$, 
$K_\be$ is a K3 surface of degree 8 of Kummer type. It has 16 (-2)-curves $\ell_{\be,i}$  which are intersection of  $K_{\be}$ with the exceptional divisor $D$ of $\mu_B:{\rm Bl}_B\mathbb P(T_X)\rightarrow \mathbb P(T_X)$. 
\end{corollary}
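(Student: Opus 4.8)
The plan is to combine the smoothness statement established inside the proof of Theorem~\ref{level} with the structural computation already performed for a general fiber in Lemma~\ref{l.general K}, after observing that that computation used genericity of $\be$ only to guarantee smoothness of $K_\be$.

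First I would record the key input: the proof of Theorem~\ref{level} shows that for every $\be\in\PP^1_{e_1,e_2}\setminus\{\bold b_1,\ldots,\bold b_5\}$ the fiber $K_\be$ is smooth, including along each curve $\ell_{\be,i}$. With smoothness available, the adjunction computation from the paragraph preceding Lemma~\ref{l.general K} becomes unconditional: since $K_{\PP(T_X)}=-2\zeta$ and $K_\be$ is a smooth member of $|2\zeta|$, adjunction gives $K_{K_\be}=\cO_{K_\be}$, so $K_\be$ is a smooth projective surface with trivial canonical bundle.

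Next I would determine the surface type and the invariants. By Lemma~\ref{l.characterization of irreducible fibers}, $\pi_\be\colon K_\be\to X$ is a finite double cover; since $K_\be$ is smooth, its branch curve $D_\be\subset X$ is automatically smooth, and $D_\be\in|\cO_X(2)|=|-2K_X|$ is the degree-$8$ genus-$5$ curve described before Lemma~\ref{l.general K}. The double-cover Euler-characteristic formula then gives $\chi_{\rm top}(K_\be)=2\chi_{\rm top}(X)-\chi_{\rm top}(D_\be)=2\cdot 8-(2-2\cdot5)=24$, which, together with $K_{K_\be}=\cO_{K_\be}$, forces $K_\be$ to be a K3 surface (an abelian surface would have $\chi_{\rm top}=0$), so in particular $q(K_\be)=0$. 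The polarization $\pi_\be^*\cO_X(1)=\pi_\be^*(-K_X)$ has self-intersection $2(\cO_X(1))^2=8$, so $K_\be$ has degree $8$.

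Finally, for the Kummer structure I would transport the description of Lemma~\ref{l.general K} verbatim. The base locus $B$ is a disjoint union of the $16$ sections $\ell_i'$, so the curves $\ell_{\be,i}=K_\be\cap D$ are $16$ pairwise disjoint smooth rational curves, each of self-intersection $-2$ on the K3 surface $K_\be$; this is a Kummer configuration, and contracting the $\ell_{\be,i}$ yields the Kummer quartic $\bar K_\be$ double-covered by an abelian surface, so $K_\be$ is of Kummer type. The main obstacle has, in effect, already been discharged by Theorem~\ref{level} through the characteristic-vector-field argument that propagates any putative isolated singularity of $K_\be$ along the Lagrangian fibration; the only genuinely new observation needed here is that nothing in the general-fiber analysis beyond smoothness was used, so the conclusion extends from the general $\be$ to every $\be\notin\{\bold b_1,\ldots,\bold b_5\}$.
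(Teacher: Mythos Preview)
Your proposal is correct and follows essentially the same route as the paper: invoke the smoothness of $K_\be$ established in the proof of Theorem~\ref{level}, then observe that the argument preceding Lemma~\ref{l.general K} required nothing beyond smoothness, so its conclusions (trivial canonical bundle via adjunction, degree $8$, and the $16$ disjoint $(-2)$-curves giving the Kummer structure) apply to every $\be\notin\{\bold b_1,\ldots,\bold b_5\}$. You have simply unpacked what the paper compresses into the single phrase ``From this and Lemma~\ref{l.general K}.''
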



\begin{thebibliography}{21}

\bibitem[1]{BNR} Beauville, Arnaud; Narasimhan, M. S.; Ramanan, S.; Spectral curves and the generalised theta divisor. J. Reine Angew. Math. 398 (1989), 169-179.

\bibitem[2]{CG} Chriss, Neil; Ginzburg, Victor; Representation theory and complex geometry. Birkh\"auser Boston, Inc., Boston, MA, 1997.

\bibitem[3]{DR} Desale, U. V.; Ramanan, S.; Classification of vector bundles of rank 2 on hyperelliptic curves. Invent. Math. 38 (1976/77), no. 2, 161–185.

\bibitem[4]{DH} Diaz, Steven; Harbater, David; 
Strong Bertini theorems.
Trans. Amer. Math. Soc. 324 (1991), no. 1, 73–86.


\bibitem[5]{Dol} Dolgachev, Igor V.; Classical algebraic geometry. A modern view. Cambridge University Press, Cambridge, 2012.

\bibitem[6]{OL19} De Oliveira, Bruno; Langdon, Christopher; Twisted symmetric differentials and the quadric algebra of subvarieties of $\PP^N$ of low codimension. Eur. J. Math. 5 (2019), no. 2, 454–475. 

\bibitem[7]{Hit} Hitchin N.J.; Stable bundles and integrable systems, Duke Math. J. 54 (1987) 91-114. 

\bibitem[8]{Hit21} Hitchin N.J.; Spinors, twistors and classical geometry. SIGMA Symmetry Integrability Geom. Methods Appl. 17 (2021), Paper No. 090, 9 pp.

\bibitem[9]{HLS} H\"oring, Andreas; Liu, Jie; Shao, Feng; Examples of Fano manifolds with non-pseudoeffective tangent bundle. J. Lond. Math. Soc. (2) 106 (2022), no. 1, 27–59. 

\bibitem[10]{Hwang} Hwang, Jun-Muk; Dual cones of varieties of minimal rational tangents. Algebraic geometry in east Asia—Taipei 2011, 123–141, Adv. Stud. Pure Math., 65, Math. Soc. Japan, Tokyo, 2015.

\bibitem[11]{HO09} Hwang, Jun-Muk; Oguiso, Keiji; Characteristic foliation on the discriminant hypersurface of a holomorphic Lagrangian fibration. Amer. J. Math. 131 (2009), no. 4, 981–1007.

\bibitem[12]{HR04} Hwang, Jun-Muk; Ramanan, S.; Hecke curves and Hitchin discriminant. Ann. Sci. École Norm. Sup. (4) 37 (2004), no. 5, 801–817.


\bibitem[13]{Kol} Koll\'ar, Janos,; Moduli of varieties of general type, (book in preparation, available at 
https:// web.math.princeton.edu/~kollar/FromMyHomePage/modbook.pdf), 2021.

\bibitem[14]{Kol96}
    Koll\'{a}r, Janos; 
    {Rational curves on algebraic varieties},
    Ergebnisse der Mathematik und ihrer Grenzgebiete. 3. Folge. A Series of Modern Surveys in Mathematics 32,
    Springer-Verlag, Berlin, 1996.

\bibitem[15]{Mal21} Mallory, Devlin; Bigness of the tangent bundle of del Pezzo surfaces and $D$-simplicity. Algebra Number Theory 15 (2021), no. 8, 2019–2036.

\bibitem[16]{NR} Narasimhan, M. S.; Ramanan, S.; Moduli of vector bundles on a compact Riemann surface. Ann. of Math. (2) 89 (1969), 14–51.

\bibitem[17]{Newstead} Newstead, P. E.; Stable bundles of rank 2 and odd degree over a curve of genus 2. Topology 7 (1968), 205–215.

\bibitem[18]{Pal} Pal, Sarbeswar; Geometry of Hitchin map, Ph. D. thesis of Homi Bhabha National Institute, India.

\bibitem[19]{Sk} Skorobogatov, Alexei; del Pezzo surfaces of degree 4 and their relation to Kummer surfaces. Enseign. Math. (2) 56 (2010), no. 1-2, 73–85.   
    
\end{thebibliography}
\end{document}